\title{Combinatorics of the basic stratum}
\author{Arno Kret}
\begin{document}

\begin{abstract}
We express the cohomology of the basic stratum of some unitary Shimura varieties associated to division algebras in terms of automorphic representations of the group in the Shimura datum.
\end{abstract}

\maketitle

\section*{Introduction}

This article is a sequel to the article \cite{kret1}. We remove an hypothesis from the main Theorem of our previous article. In that article we proved a relation between the $\ell$-adic cohomology of the basic stratum of some simple Shimura varieties and the cohomology of the complex Shimura variety. These simple Shimura varieties are those of Kottwitz considered in his Inventiones article \cite{MR1163241} on the construction of Galois representation. The varieties are associated to certain division algebras over $\Q$ with involution of the second kind; we call such varieties Kottwitz varieties.

When we proved the main theorem of our previous article we assumed (essentially) that the Newton polygon associated to the basic stratum had no integral point other than the begin point and the end point. In this article we solve the resulting combinatorial problems when one removes this simplifying condition from the theorem in case the prime $p$ of reduction is split in the center of the division algebra defining the Kottwitz variety. 

A consequence of our final result is an explicit expression for the zeta function of the basic stratum of Kottwitz's varieties at split primes of good reduction. The expressions are in terms of: (1) Automorphic forms on the group $G$ of the Shimura datum, (2) The determinant of the factor at $\p$ of their associated Galois representations, and (3) Polynomials in $q^\alpha$ of combinatorial nature, associated to certain non-crossing lattice paths in the plane $\Q^2$. 

As an application we deduce a formula for the dimension of the basic stratum. 

\bigskip

\textbf{Acknowledgements:} I thank my thesis advisor Laurent Clozel for his all his help. He gave me many ideas and checked several times the arguments and proofs in this article. I also thank Laurent Fargues (my second advisor), and Olivier Schifmann, Peter Scholze, Alberto Minguez, Marco Deieso, Giovanni Rosso and Vito Mandorino for helpful discussions. 

\setcounter{tocdepth}{1}
\tableofcontents

\section{Notations}

Let $p$ be a prime number and let $F$ be a non-Archimedean local field with residue characteristic equal to $p$. Let $\varpi_F \in \cO_F$ be a prime element, and define $q := \#(\cO_F/\varpi_F)$. We write $G_n$ for the topological group $\Gl_n(F)$, and we write $\cH(G)$ for the Hecke algebra of locally compact constantly supported functions on $G$. We often drop the index $n$ from the notation if confusion is not possible. We call a parabolic subgroup $P$ of $G$ \emph{standard} if it is upper triangular, and we write $P = MN$ for its standard Levi decomposition. We write $K$ for the hyperspecial group $\Gl_n(\cO_F)$ and $\cH_0(G)$ for the Hecke algebra of $G$ with respect to $K$. The group $P_0 \subset G$ is the standard Borel subgroup of $G$, $T$ is the diagonal torus of $G$, and $N_0$ is the group of upper triangular unipotent matrices in $G$. 

We write $\widehat G, \widehat T, \widehat M, \ldots$ for the corresponding complex dual groups, $\widehat G = \Gl_n(\C)$, $\widehat T = (\C^\times)^n$, and so on. If $\pi$ is an unramified representation of some Levi subgroup $M$ of $G$ then we write $\varphi_{M, \pi} \in \widehat M$ for the Hecke matrix of this representation. 

Let $n$ be a positive integer. A \emph{partition} of $n$ is a finite, \emph{non-ordered} list of non-negative numbers whose sum is equal to $n$. A \emph{composition} of $n$ is a finite, \emph{ordered} list of positive numbers whose sum is equal to $n$. Recall that the compositions of $n$ correspond to the standard parabolic subgroups of $G$.

We write $A$ for the ring $\C[X_1^{\pm 1}, X_2^{\pm 1}, \ldots, X_n^{\pm 1}]^{\iS_n}$. The Satake transform $\cS$ provides an isomorphism from $\cH_0(G)$ onto the ring $A$. 

Let $n$ and $\alpha$ be positive integers, and let $s$ be a non-negative integer with $s \leq n$. We call the number $s$ the \emph{signature}, and we call the number $\alpha$ the \emph{degree}. 
The function $f_{n \alpha s} \in \cH_0(G)$ is the spherical function whose Satake transform is 
\begin{align}\label{kottwitzfunction}
q^{\alpha s (n-s)/2} \sum_{\nu \in \iS_n \cdot \mu_s } [\nu]^\alpha = q^{\alpha s (n-s)/2} \sum_{I \subset \{1, \ldots, n\}, \#I = s} \prod_{i \in I} X_i^\alpha \in A.
\end{align}
We put $f_{n \alpha s} = 0$ when $n,\alpha,s \in \Z_{\geq 0}$ are such that $n < s$. We will call $f_{n\alpha s}$ a \emph{simple Kottwitz function}. The \emph{composite Kottwitz functions} $f_{n\alpha \sigma}$ are obtained from partitions $\sigma$ of $s$ as follows. Let $\sigma = (\sigma_1, \sigma_2, \ldots, \sigma_r)$ be a partition of $s$. Then we write $f_{n\alpha \sigma} \in \cH_0(G)$ for the convolution product $f_{n\alpha \sigma_1} * f_{n\alpha \sigma_2} * \cdots * f_{n\alpha \sigma_r} \in \cH_0(G)$. 

We write $\chi_c^G$ for the characteristic function on $G$ of the subset of compact elements. Let $\pi$ be a smooth $G$-representation of finite length and $f$ a locally constant, compactly supported function on $G$. Then we write $\Tr(\chi_c^G f, \pi)$ for the \emph{compact trace} \cite{MR1068388} of $f$ against $\pi$. 

Let $m,m' \in \Z_{\geq 1}$. If $\pi$ (resp. $\pi'$) is a smooth admissible representation of $G_m$ (resp. $G_{m'}$), then we write $\pi \times \pi'$ for the $G_{m + m'}$-representation parabolically induced (unitary induction) from the representation $\pi \otimes \pi'$ of the standard Levi subgroup consisting of two blocks, one of size $m$, and the other one of size $m'$. The tensor product $\pi \otimes \pi'$ in the above formula is taken along the blocks of this Levi subgroup. We write $\cR$ for the direct sum $\bigoplus_{n \in \Z_{\geq 0}} \Groth(G_n)$ with the convention that $G_0$ is the trivial group. The group $G_0$ has one unique irreducible representation $\sigma_0$ (the space $\C$, with trivial action). The operation ``direct sum of representations'' together with the product ``$\times$'' turns the vector space $\cR$ into a commutative $\C$-algebra with $\sigma_0$ as unit element. We call it the \emph{ring of Zelevinsky}. 

The ring of Zelevinsky has an involution $\iota$, called the \emph{Zelevinsky involution}. Aubert \cite{MR1285969} gave a refined definition of this involution, making sense for all reductive groups. The involution is defined by $X^\iota := \sum_{P = MN} \eps_P \Ind_P^G(X_N(\delta_P^{-1/2}))$ for all $X \in \cR$. With `involution' we mean that $\iota$ is an automorphism of the complex algebra $\cR$ and it is of order two: $\iota^2 = \Id_{\cR}$. 

We write $\nu$ for the absolute value morphism from $\Gl_1(F) = F^\times$ to $\C^\times$. By a \emph{segment} $S = \langle x, y \rangle$ we mean a set of numbers $\{x, x+1, \ldots, y\}$ where $x, y \in \Q$ and where we need to explain the conventions in case $y \leq x$. In case $y$ is strictly smaller than $x - 1$, then $\langle x,y \rangle = \emptyset$; in case $x$ is equal to $y$, then the segment $\langle x,y \rangle = \{x\}$ has one element. We have one unusual convention: For $y = x-1$ we define the segment $\langle x, y \rangle$ to be the set $\{\star\}$ of one element containing a distinguishing symbol ``$\star$''. The \emph{length} $\ell(S)$ of a segment $S = \langle x,y \rangle$ is defined to be $y - x + 1$. Thus the segment $\{\star\}$ has length $0$, the segment $\{x\}$ has length $1$, the segment $\{x, x+1\}$ has length $2$, etc. We put $\ell \langle x, y \rangle = -1$ in case $y < x -1$.  

For any segment $\langle x,y \rangle$ with $y \geq x$ we write $\Delta \langle x,y\rangle$ for the unique irreducible quotient of the induced representation $\nu^x \times \nu^{x+1} \times \cdots \times \nu^y$. We define $\Delta\{\star\}$ to be $\sigma_0$ (the one-dimensional representation of the trivial group $\Gl_0(F)$), and we define $\Delta \langle x, y \rangle$ to be $0$ in case $y < x - 1$. For any segment $S$ of non-negative length the object $\Delta S$ is a representation of the group $\Gl_n(F)$, where $n$ is the length of $S$. 

For the standard properties of segments we refer to Zelevinsky's work \cite{MR584084} (cf. \cite{MR689531}), but note that our conventions are slightly different, because we allow rational numbers in the segments and we have the segment $\{\star\}$. We mention that this difference is there only for notational purposes, and that it does not change the mathematics. 

For any finite ordered list of segments $S_1, S_2, \ldots, S_t$ we have the product representation $\pi := (\Delta S_1) \times (\Delta S_2) \times \cdots \times (\Delta S_t)$. Observe that, due to our conventions, in case $S_a = \{\star\}$ for some $a$, then $\Delta S_a$ is the unit in $\cR$, and 
\begin{equation}\label{sterretje}
\pi = (\Delta S_1) \times (\Delta S_2) \times \cdots \times \widehat {(\Delta S_a)} \times \cdots \times (\Delta S_t) \in \cR,
\end{equation}
where the hat means that we leave the corresponding factor out of the product. In case $S_b = \emptyset$ for some index $b$, then we have $\pi = 0$ in $\cR$. 

In the combinatorial part of this article the representations of interest are the Speh representations. We recall their definition here. Let $t, h$ be positive integers such that $n = th$. We define $\Speh(h, t)$ to be the (unique) irreducible quotient of the representation $\St_{G_h} \nu^{\tfrac {t-1}2} \times \cdots \times \St_{G_h} \nu^{\tfrac {1-t}2}$. This representation has $t$ segments, $S_a = \langle x_a, y_a \rangle$, $a = 1, \ldots, t$, where 
$$
x_a = \frac {t-h}2 - (a-1) \quand y_a = \frac {t + h}2 - a. 
$$
Observe that, for each index $a$, we have $\ell S_a = h$. Furthermore, for each index $a < t$, we have $x_{a + 1} = x_a - 1$ and $y_{a + 1} = y_a - 1$. 

If $P = MN \subset G$ is a standard parabolic subgroup of $G$, then we have the spherical functions $\chi_N, \widehat \chi_N$ in $\cH_0(M)$ associated to the acute and obtuse Weyl chambers. We refer to \cite[Eq. (1.1), (1.2)]{kret1} for the precise definition and explicit description of these functions. 

\section{Computation of some compact traces}
In this section we compute the compact traces $\Tr(\chi_c^G f_{n\alpha s}, \pi)$ of the simple Kottwitz functions $f$ on a certain class of representations $\pi$. This class will be sufficiently large to contain all smooth representations that occur in the cohomology of (basic) strata of unitary Shimura varieties at primes of good reduction. 

We will follow the following strategy to compute $\Tr(\chi_c^G f_{n\alpha s}, \pi)$. A semistable representation $\pi$ of $G$ is called \emph{standard} if it is isomorphic to a product of essentially square integrable representations. The computation of the compact trace $\Tr(\chi_c^G f_{n\alpha s}, \pi)$ on a square-integrable representation is easy, and using van Dijk's formula adapted for compact traces \cite[Prop.~1.5]{kret1}, we easily deduce formulas for compact traces on the standard representations. Any semistable irreducible representation $\pi$ may be\footnote{Zelevinsky proved in \cite{MR584084} that the standard representations form a basis of $\cR$ as complex vector space. } (uniquely) written as a sum $\pi = \sum_I c_I \cdot I \in \cR$ where $I$ ranges over the standard representations, and the coefficients $c_I \in \C$ are $0$ for nearly all $I$. We have
$$
\Tr(\chi_c^G f, \pi) = \sum_I c_I \Tr(\chi_c^G f, I).
$$
Thus, there are two steps to compute $\Tr(\chi_c^G f, \pi)$: (Prob1) Know the coefficients $c_I$ and (Prob2) Make the sum $\sum_I c_I \Tr(\chi_c^G f, I)$. The first problem (Prob1) is related to the Kazhdan-Lustzig conjecture\footnote{This conjecture is a Theorem, see \cite[Thm.~8.6.23]{MR2838836}}. The ``Kazhdan-Lustzig Theorem" of Beilinson-Bernstein \cite{MR610137} (and \cite{MR1348671}) interprets the multiplicity of any given irreducible representation $\pi$ in the representation $I$. The Kazhdan-Lustzig Theorem interprets this multiplicity as the dimension of certain intersection cohomology spaces, and also as the value at $q = 1$ of certain Kazhdan-Lustzig polynomials. 

For the irreducible representations $\pi$ contributing to the cohomology of Newton strata of unitary Shimura varieties we will not have to deal with problem (Prob1). The Theorem of Moeglin-Waldspurger \cite{MR1026752} (cf. \cite[Thm.~2.1]{kret1}) for the discrete spectrum of the general linear group implies that these representations must be of a very particular kind (\emph{rigid representations}, cf. [\textit{loc. cit}, p.~14]). Any rigid representation is a product of unramified twists of Speh representations in $\cR$, and therefore we restrict our attention to these Speh representations only. Tadic has solved the first problem (Prob1) for the Speh representations. The coefficients $c_I$ turn out to be $-1, 0$ or $1$ for these representations (precise statement in Theorem~\ref{LMthm}). Therefore, we are mostly concerned with the second problem (Prob2).

\subsection{Tadic's determinantal formula} We recall an important character formula of Tadic for the Speh representations. This formula is a crucial ingredient for our computations.

Let $S_1 = \langle x_1, y_1\rangle , S_2 = \langle x_2, y_2\rangle , \ldots, S_t = \langle x_t, y_t\rangle $ be an ordered list of segments defining a representation of the group $G = \Gl_n(F)$. Let $\iS_t$ be the symmetric group on $\{1, 2, \ldots, t\}$. For any $w \in \iS_t$ we define the number $n^w_a$ to be $y_a - x_{w(a)} + 1$. We have
\begin{equation}\label{urts}
\sum_{a = 1}^k n^w_a = \lhk \sum_{a = 1}^k y_a \rhk - \lhk \sum_{a = 1}^k x_{w(a)} \rhk + k = \lhk \sum_{a = 1}^k y_a \rhk - \lhk \sum_{a = 1}^k x_{a} \rhk + k = \sum_{a=1}^k n_a = n.
\end{equation}
The numbers $n^w_a$ need not be positive. We define $\iS_t' \subset \iS_t$ to be subset consisting of those permutations $w\in \iS_t$ such that the numbers $n_a^w$ are positive or $0$. If the permutation $w$ lies in the subset $\iS_t' \subset \iS_t$, then $(n_a^w)$ is a composition of $n$. Assuming that $w \in \iS_t'$ we will write $P_w = M_w N_w$ for the parabolic subgroup of $G$ corresponding to the composition $(n^w_a)$. 

Let $w \in \iS_t'$. We define the segments $S_1^w := \langle x_{w(1)}, y_1\rangle , S_2^w := \langle x_{w(2)}, y_2\rangle , \ldots, S_t^w := \langle x_{w(t)}, y_t\rangle $. We have $\ell(S_a^w) = n_a^w$. We let $\Delta_w$ be the representation of $M_w$ defined by $(\Delta S_1^w) \otimes \cdots \otimes (\Delta S_t^w)$, where the tensor product is taken along the blocks of $M_w$. The representation $I_w$ is defined to be the product $\Delta S_1^w \times \Delta S_2^w \times \cdots \Delta S_t^w$, \ie it is the (unitary) parabolic induction $\Ind_{P_w}^G \Delta_w$ of $\Delta_w$ to $G$. In case $w \in \iS_t \backslash \iS_t'$ we define both $\Delta_w$ and $I_w$ to be $0$. 

\begin{remark}
It is possible that $S_a^w = \{ \star \}$ for some permutation $w$. In that case the representation $\Delta S_a^w$ is the unit element $\sigma_0$ of $\cR$, and thus can be left out of the product that defined $I_w$ (cf. Equation~\eqref{sterretje}). 
\end{remark}

In these notations we have the following theorem:
 
\begin{theorem}[Tadic]\label{LMthm}
Let $\pi$ be a Speh representation of $G$ and let $S_1 = \langle x_1, y_1 \rangle, S_2 = \langle x_2, y_2 \rangle, \ldots S_t = \langle x_t, y_t\rangle$ be its segments. 
The representation $\pi$ satisfies Tadic's determinantal formula $$\pi = \sum_{w \in \iS_t} \sign(w) I_w.$$ 
\end{theorem}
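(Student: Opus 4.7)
The plan is to recognise the right hand side as a Jacobi--Trudi determinant applied, via Zelevinsky's classification, to a rectangular partition. First, plugging in the explicit values $x_a = \tfrac{t-h}{2} - (a-1)$ and $y_a = \tfrac{t+h}{2} - a$ one computes
\begin{equation*}
n_a^w \;=\; y_a - x_{w(a)} + 1 \;=\; h + w(a) - a,
\end{equation*}
so that $\sum_{w \in \iS_t} \sign(w)\, I_w$ is nothing but the formal expansion of the determinant
\begin{equation*}
\det\bigl(\Delta\langle x_j, y_i\rangle\bigr)_{i,j=1}^{t} \in \cR,
\end{equation*}
with the product in each term of the expansion interpreted as parabolic induction. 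The unusual convention for the segment $\{\star\}$ and for empty segments is precisely what is needed so that a factor with $n_a^w = 0$ contributes the unit $\sigma_0$ (and can be dropped from the product), while any summand with some $n_a^w < 0$ vanishes outright.

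Second, I invoke the isomorphism of Zelevinsky between the subring of $\cR$ generated by segments on a single cuspidal line $\{\nu^x\}_{x \in \Q}$ and the ring of symmetric functions $\Lambda$: under it, the generalised Steinberg $\Delta\langle x, y\rangle$ corresponds to the complete symmetric function $h_{y-x+1}$, and the Speh representation $\Speh(h,t)$ corresponds to the Schur function $s_{(h^t)}$ attached to the $t \times h$ rectangle. The two boundary conventions above translate respectively to $h_0 = 1$ and $h_k = 0$ for $k < 0$. The classical Jacobi--Trudi identity
\begin{equation*}
s_{(h^t)} \;=\; \det\bigl(h_{h + j - i}\bigr)_{i,j=1}^{t} \;=\; \sum_{w \in \iS_t} \sign(w)\, \prod_{a=1}^t h_{h + w(a) - a}
\end{equation*}
then transports back through this isomorphism to the statement of the theorem.

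The main obstacle is justifying the precise representation-theoretic dictionary, in particular checking that $\Speh(h,t)$ really corresponds to the rectangular Schur function $s_{(h^t)}$; this ultimately rests on the Bernstein--Zelevinsky classification of Speh representations as the Langlands quotients of their standard modules with rectangularly arranged segments. Once the dictionary is in place, the identity is purely algebraic. An alternative, more hands-on route would proceed by induction on $t$, using that the identity permutation $w = \mathrm{id}$ yields the standard module $\Delta S_1 \times \cdots \times \Delta S_t$ whose unique irreducible quotient is $\Speh(h,t)$, and cancelling the remaining contributions via a sign-reversing involution on $\iS_t$; but the Jacobi--Trudi route is conceptually cleaner.
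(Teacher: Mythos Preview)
The paper does not actually prove this theorem: its ``proof'' is a citation list (Tadic's original argument, the Chenevier--Renard simplification, Badulescu's proof via the Moeglin--Waldspurger algorithm, and the Lapid--Minguez extension to ladders). Your first step, rewriting the right-hand side as the expansion of $\det\bigl(\Delta\langle x_j, y_i\rangle\bigr)_{i,j}$ in $\cR$, is correct and is precisely the observation the paper attributes to Chenevier--Renard.

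The gap is in the second step. There is no ring isomorphism between the cuspidal-line subring of $\cR$ and $\Lambda$ sending $\Delta\langle x,y\rangle$ to $h_{y-x+1}$: that subring is the polynomial ring on the length-one segments $\nu^x$, and segments of the same length but different position are genuinely distinct elements whose products behave differently under parabolic induction according to linkage (for instance $\Delta\langle 0,1\rangle \times \Delta\langle 1,2\rangle$ is reducible while $\Delta\langle 0,1\rangle \times \Delta\langle 5,6\rangle$ is irreducible, yet your map would send both to $h_2^2$). At best you get a non-injective homomorphism to $\Lambda$, and then you cannot ``transport back''. You rightly flag the correspondence $\Speh(h,t)\leftrightarrow s_{(h^t)}$ as the crux, but the Bernstein--Zelevinsky/Langlands classification only identifies $\Speh(h,t)$ as the unique irreducible quotient of the $w=\mathrm{id}$ term; it says nothing about the remaining $t!-1$ terms and their signs. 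The assertion that the Speh representation expands in the standard-module basis exactly as $s_{(h^t)}$ expands in $h$-monomials \emph{is} Tadic's theorem, so invoking it is circular. The proofs in the cited references instead work internally in $\cR$ --- via direct computation, the Moeglin--Waldspurger algorithm, or a Jacquet-module induction establishing a Desnanot--Jacobi-type recursion --- rather than importing Jacobi--Trudi from $\Lambda$.
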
 
\begin{proof}
This Theorem was frst proved by Tadic in \cite{MR1359141} for Speh representations with a difficult argument. Chenevier and Renard simplified the proof and observed that the above expression is a determinant of a matrix with coefficients in Zelevinsky's ring $\cR$. Also Badulescu gave a simpler proof of Theorem~\ref{LMthm} in the note \cite{badulescunote} using the Moeglin-Waldspurger algorithm \cite{MR863522}. Recently Lapid and Minguez \cite[Thm.~1]{LM} extended the formula to the larger class of ladder representations. 
\end{proof}

\begin{remark}
Our formulation of Theorem~\ref{LMthm} is weaker than the theorem proved by the above authors, because we consider only \emph{semistable} Speh representations. (They have a similar statement also for the non semistable Speh/ladder representations.) 
\end{remark}

By the definition of the subset $\iS_t' \subset \iS_t$ we have for all $w \in \iS_t$ that $I_w \neq 0$ if and only if $w \in \iS_t'$, and thus we may as well index over the elements $w \in \iS_t'$ in the sum in the above Theorem. In the cases where the inclusion $\iS_t' \subset \iS_t$ is strict, the subset $\iS_t'$ is practically never a subgroup of $\iS_t$, it will neither be closed under composition nor contain inverses of elements.

\subsection{Lattice paths and the Steinberg representation} In this section we will express the compact trace of the functions $f_{n\alpha s}$ on the Steinberg representation in terms of certain lattice paths in $\Q^2$.

We fix throughout this section a positive integer $\alpha$, called the \emph{degree}. This integer will play only a minor role in the computations of this section as it affects only the weights of the paths. The degree will become more important later. 

Let $A^+$ be the polynomial ring $\C[q^a | a \in \Q]$ of rational, formal powers of the variable $q$. Equivalently, $A^+$ is the complex group ring $\C[\Q^+]$ of the additive group $\Q^+$ underlying $\Q$. A \emph{path} $L$ in $\Q^2$ is a sequence of points $\uv_0, \uv_1, \uv_2, \ldots, \uv_r$ such that $\uv_{i+1} - \uv_i = (1, 0)$ (\emph{east}), or $\uv_{i+1} - \uv_i = (1,1)$ (\emph{north-east}). The starting point of $L$ is $\uv_0$ and the end point is $\uv_r$; the number $r$ is the \emph{length}. An eastward step $(1, 0)$ has weight $1$ and a north-eastward step $(a, b) \to (a+1, b+1)$ has weight $q^{-\alpha \cdot a} \in A^+$. The \emph{weight of the path} $L$ is defined to be the product in $A^+$ of the weights of its steps. 

\begin{remark} We allow paths of length zero; such a path consists of one point $\uv_0$ and no steps. The weight of a path of length $0$ is equal to $1$. The paths of length $0$ correspond to compact traces on the special segments $\{\star\}$ introduced earlier. 
\end{remark}

Let $L$ be a path in $\Q^2$. Connect the starting point $\uv_0$ of $L$ with its end point $\uv_r$ via a straight line $\ell$. Then $L$ is called a \emph{Dyck path} if all of its points $\uv_a$ lie on or below the line $\ell$ in the plane $\Q^2$. The Dyck path is called \emph{strict} if none of its points $\uv_a$ other than the initial  and end point, lies on the line $\ell$. 

Let $\ux, \uy$ be two points in $\Q^2$. Then we write $\Dyck_\strict(\ux, \uy) \in A^+$ for the sum of the weights of all the strict Dyck paths that go from the point $\ux$ to the point $\uy$. We call the polynomial $\Dyck_\strict(\ux, \uy)$ the \emph{strict Dyck polynomial}. There are also non-strict Dyck polynomials $\Dyck(\li x, \li y)$ but we are not concerned with those in this subsection; they are important for the computation of compact traces on the trivial representation.
 
Let $f \in \cH_0(G)$ be a function. We abuse notation and write $\widehat \chi_N \cS(f)$ for the $T$-Satake transform of the function $\widehat \chi_N f^{(P_0)}$. This truncation $\widehat \chi_N f$ of an element $f \in A$ is best understood graphically. 

We first extend the notion of a path slightly to the concept of a graph. A \emph{graph} in $\Q^2$ is a sequence of points $\uv_0, \uv_1, \ldots, \uv_r$ with $\uv_{i+1} - \uv_{i} = (1, e)$, where $e$ is an integer. Thus the paths are those graphs with $e \in \{0, 1\}$ for each of its steps. We define the \emph{weight} of a step $(a, b) \to (a+1, b + e)$ to be $q^{-\alpha \cdot e \cdot a} \in A^+$, and the \emph{weight of a graph} is the product of the weights of its steps. 

To a monomial $X = X_1^{e_1} X_2^{e_2} \cdots X_n^{e_n} \in \C[X_1^{\pm 1}, X_2^{\pm 1}, \ldots, X_n^{\pm 1}]$, with $e_i \in \Z$ and $\sum_{i=1}^n e_i = s$ we associate the graph $\cG_X$ with points 
\begin{equation}\label{thegraph}
\uv_0 := \ell(\tfrac {1-n}2), \quad \uv_i := \uv_0 + \lhk i, e_{n} + e_{n-1} + \ldots + e_{n+1-i} \rhk \in \Q^2, 
\end{equation}
for $i = 1, \ldots, n$. 
If $x \in \Q$, then we write $\ell(x)$ for the point $(x, \tfrac sn x)$ on the line $\ell$. 
Because the sum $\sum_{i=1}^n e_i$ is equal to $s$, the end point of the graph is 
$$
\ell(\tfrac {1-n}2) + (n,s) = \ell(\tfrac {n-1}2 + 1) \in \Q^2.
$$ 
We have $\widehat \chi_{N_0} X = X $ if and only if\footnote{This is true because the fundamental weights $\varpi_{\alpha_i}^G$ of the general linear group are of the form $H_1 + \cdots + H_i - \tfrac in (H_1 + H_2 + \ldots + H_n)$ on $\ia_0$. The statement follows also directly from the conclusion made at Equation (1.10) in the proof of Proposition 1.11 in \cite{kret1}.} 
\begin{equation}\label{thecondition}
e_1 + e_2 + \ldots + e_i > \tfrac sn i,
\end{equation}
for all indices $i < n$ (and if $\widehat \chi_{N_0} X \neq X$, then $\widehat \chi_{N_0} M = 0$). 
The condition in Equation~\eqref{thecondition} is true if and only if the graph defined in Equation~\eqref{thegraph} lies strictly below the straight line $\ell \subset \Q^2$ of slope $\tfrac sn$ going through the origin. Furthermore, the evaluation of $X$ at the point 
\begin{equation}\label{thewweight}
\lhk q^{\tfrac {1-n} 2}, q^{\tfrac {3 - n} 2}, \ldots, q^{\tfrac {n - 1} 2} \rhk \in \widehat T,
\end{equation}
equals the weight of the graph $\cG_X$. 

\begin{remark}
The reader might find it strange that in Equation~\eqref{thegraph} we let the graphs go in the inverse direction. Why did we make this convention? We made this convention because we want graphs that stay below the line $\ell$. If we draw the graphs using the `natural' formula, then we get a graph above the line $\ell$ going from right to left. So why not consider only graphs that stay above $\ell$? Of course this is equivalent, but later, when we compute the graph for the trivial representation, we get get graphs whose `natural' formula stays below $\ell$ and goes from left to right. Thus, either way, we have to invert directions. 
\end{remark}

\begin{lemma}\label{heckeevaluation}
Consider the representation $\pi = \one_T(\delta_{P_0}^{1/2})$ of the group $T$. Let $f$ be a function in the spherical Hecke algebra of $T$. Then the trace of $f$ against $\pi$ is equal to the evaluation of $\cS(f) \in A$ at the point
$$
\lhk q^{\tfrac {1-n} 2}, q^{\tfrac {3 - n} 2}, \ldots, q^{\tfrac {n - 1} 2} \rhk \in \widehat T,
$$
\end{lemma}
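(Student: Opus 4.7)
This lemma amounts to identifying the Hecke matrix of the unramified character $\pi = \one_T(\delta_{P_0}^{1/2})$ of $T$. The strategy is to unfold both sides to sums over the cocharacter lattice $X_*(T) = \Z^n$ and compare term by term.

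First I would unwind the Satake transform for the torus. Since $T$ is abelian, there is no unipotent radical to integrate over and $\delta_T = 1$, so $\cS$ simply identifies $\cH_0(T)$ with $\C[X_*(T)] = \C[X_1^{\pm 1},\ldots,X_n^{\pm 1}]$ by sending $f$ to $\sum_{\lambda \in X_*(T)} f(\varpi^\lambda)[\lambda]$, normalized so that $T \cap K$ has volume $1$. Writing $e_i$ for the $i$-th standard cocharacter, this identification gives, for every unramified character $\chi$ of $T$,
$$
\Tr(f,\chi) \;=\; \int_T f(t)\chi(t)\,dt \;=\; \sum_{\lambda \in X_*(T)} f(\varpi^\lambda)\chi(\varpi^\lambda),
$$
which is exactly the evaluation of $\cS(f)$ at the point $(\chi(\varpi^{e_1}), \ldots, \chi(\varpi^{e_n})) \in \widehat T$.

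Second I would compute the Hecke matrix of $\chi = \delta_{P_0}^{1/2}$. Counting positive roots of $\Gl_n$ one has $\delta_{P_0}(\mathrm{diag}(t_1,\ldots,t_n)) = \prod_{i=1}^n |t_i|^{n+1-2i}$, so substituting $t = \varpi^{e_i}$ gives $\delta_{P_0}(\varpi^{e_i}) = q^{2i-n-1}$ and therefore $\delta_{P_0}^{1/2}(\varpi^{e_i}) = q^{(2i-n-1)/2}$. As $i$ runs from $1$ to $n$ this produces the sequence $q^{(1-n)/2}, q^{(3-n)/2}, \ldots, q^{(n-1)/2}$, which is precisely the evaluation point asserted in the statement. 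Combined with the first step, this proves the lemma.

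The only real obstacle is a bookkeeping one: verifying that the normalization of the Satake transform on $T$ (twist-free because $\delta_T = 1$), the Haar measure on $T$, and the formula for $\delta_{P_0}$ above are mutually compatible with the conventions of the paper. Once these normalizations are fixed, the lemma is an immediate comparison of the two displays.
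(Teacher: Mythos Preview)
Your proof is correct and follows essentially the same approach as the paper: both arguments reduce the lemma to computing $\delta_{P_0}^{1/2}(\varpi^{e_i})$ for each standard cocharacter $e_i$, and obtain $q^{(2i-n-1)/2}$. The paper packages the first step via the identification $\Hom(X_*(T),\C^\times)=\widehat T(\C)$ rather than writing out the integral over $T$, but this is only a difference in presentation.
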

\begin{proof}
The character $\delta_{P_0}^{1/2}$ on $T$ is equal to 
$$
T \owns (t_1, t_2, \ldots, t_n) \longmapsto | t_1 |^{\tfrac {n-1}2} |t_2|^{\tfrac {n-3}2} \cdots |t_n|^{\tfrac {n-1}2} \in \C^\times. 
$$
To any (rational) cocharacter $\nu \in X_*(T)$ we may associate the composition $(\delta_{P_0}^{1/2} \circ \nu) \colon F^\times \to T \to \C^\times$. We evaluate this composition at the prime element $\varpi_F \in F^\times$.  Thus we have an element of the set
\begin{equation}\label{identificatiesAA}
\Hom(X_*(T), \C^\times) = \Hom(X^*(\widehat T), \C^\times) = X_*(\widehat T) \otimes_\Z \C^\times = \widehat T(\C), 
\end{equation}
where the last isomorphism is given by
$$
X_*(\widehat T) \otimes_\Z \C^\times \owns \nu \otimes z \longmapsto \nu(z) \in \widehat T(\C). 
$$
We have $T = (F^\times)^n$ and thus we have the standard basis $e_i$ on $X_*(T)$. This corresponds to the standard basis $e_i$ on $X_*(\widehat T)$ via the first two equalities in Equation~\eqref{identificatiesAA}. If we take $\nu = e_i$ in $(\delta_{P_0}^{1/2} \circ \nu)(\varpi_F)$ then we get
$$
(\delta_{P_0}^{1/2} \circ e_i)(\varpi_F) = |\varpi_F|^{\tfrac {n-1}2 - i + 1} = q^{\tfrac {1-n}2 + i - 1}. 
$$ 
This completes the verification.
\end{proof}

Let $\ell \subset \Q^2$ be the line of slope $\tfrac sn$ through $0 \in \Q^2$ that we introduced earlier. We write $\ell(x)$ for the point $(x, \tfrac sn x)$ on $\ell$ if $x \in \Q$.

\begin{lemma}\label{steintrace}
The compact trace $\Tr(\chi_c^G f_{n\alpha s}, \St_G)$ on the Steinberg representation is equal to the polynomial $(-1)^{n-1} q^{ s(n-s)/2} \cdot \Dyck_\strict(\ell(\tfrac {1-n}2), \ell(\tfrac {n-1}2 + 1)) \in A^+$. 
\end{lemma}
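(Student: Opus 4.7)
The plan is to combine van Dijk's compact-trace formula (as in \cite[Prop.~1.5]{kret1}) with the Satake--Hecke evaluation of Lemma~\ref{heckeevaluation}, and then to read off the result combinatorially via the monomial-to-graph dictionary of Equation~\eqref{thegraph}. First, I would apply the compact-trace formula to $\St_G$. Since $\St_G$ is essentially square-integrable, Casselman's criterion identifies its Jacquet module at the Borel $P_0$ with the one-dimensional character $\one_T(\delta_{P_0}^{1/2})$; the Jacquet modules at larger standard parabolics $P = MN$ are twists of Steinbergs on $M$. Combining these contributions---either by induction on $n$ using the lemma itself on proper Levi subgroups, or by a direct cancellation based on the Solomon--Tits-type expression of $\St_G$ in the Grothendieck group---should reduce the calculation to the single Borel contribution
\[
\Tr(\chi_c^G f_{n\alpha s}, \St_G) \;=\; (-1)^{n-1}\, \Tr^T\!\bigl(\one_T(\delta_{P_0}^{1/2}),\; \widehat\chi_{N_0}\, f_{n\alpha s}^{(P_0)}\bigr),
\]
the sign $(-1)^{n-1}$ coming from the semisimple rank of $G$.

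By Lemma~\ref{heckeevaluation}, the torus trace on the right equals the evaluation of $\widehat\chi_{N_0}\cS(f_{n\alpha s}) \in A$ at the distinguished point $\bigl(q^{(1-n)/2}, q^{(3-n)/2}, \ldots, q^{(n-1)/2}\bigr) \in \widehat T$. Unfolding $\cS(f_{n\alpha s}) = q^{\alpha s(n-s)/2}\sum_{|I|=s}\prod_{i \in I}X_i^\alpha$, the construction of Equation~\eqref{thegraph} attaches to each subset $I \subset \{1, \ldots, n\}$ of size $s$ a graph $\cG_I$ running from $\ell(\tfrac{1-n}{2})$ to $\ell(\tfrac{n+1}{2})$, and each surviving monomial contributes its evaluation at the special point.

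The key combinatorial step is to observe that the dominance inequalities of Equation~\eqref{thecondition}---which characterize when $\widehat\chi_{N_0}$ preserves a given monomial---say precisely that $\cG_I$ stays strictly below the line $\ell$ of slope $s/n$, once we rescale vertically by $\alpha$ to convert the graph into a $\{0,1\}$-path. Together with the identification of monomial evaluations with graph (hence path) weights supplied by the paragraph following Equation~\eqref{thewweight}, the sum of the surviving terms is exactly $\Dyck_\strict(\ell(\tfrac{1-n}{2}), \ell(\tfrac{n+1}{2}))$. Assembling this with the overall prefactor and sign then yields the statement of the lemma.

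The main obstacle I anticipate is the reduction carried out in the first step. A priori, van Dijk's formula produces contributions from every standard parabolic $P \supset P_0$, and the Jacquet modules $(\St_G)_{N_P}$---twisted Steinbergs on proper Levi subgroups---are nonzero; collapsing all of this into the single Borel term is not purely formal and will require either a careful recursion on $n$ (invoking the lemma at smaller rank together with the multiplicativity of $\cS$ and $\widehat\chi$ under Levi decomposition) or a direct alternating-sum argument in the Grothendieck group. Once that reduction is established, the remainder of the argument---translating dominance into the strict Dyck condition and matching weights---is essentially a bookkeeping check.
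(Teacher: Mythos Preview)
Your proposal is correct and, for the combinatorial half, is essentially identical to the paper's argument: both use the monomial-to-graph dictionary of Equation~\eqref{thegraph}, identify the obtuse truncation $\widehat\chi_{N_0}$ with the strict Dyck condition via Equation~\eqref{thecondition}, and read off the weight at the point of Lemma~\ref{heckeevaluation}.

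The one substantive difference is in your first step. You propose to derive the reduction
\[
\Tr(\chi_c^G f_{n\alpha s}, \St_G) \;=\; (-1)^{n-1}\, \Tr\bigl(\widehat\chi_{N_0} f_{n\alpha s}^{(P_0)},\ \one_T(\delta_{P_0}^{1/2})\bigr)
\]
from scratch, via van Dijk's formula plus either an induction on $n$ or an alternating-sum cancellation over all parabolics. The paper does not do this: it simply quotes this identity as \cite[Prop.~1.13]{kret1}, already established in the prequel. So what you flagged as the ``main obstacle'' is in fact a black box you are allowed to invoke. Your proposed routes to that identity (recursion on Levi factors, or the alternating-sum expression of $\St_G$ in the Grothendieck group) are reasonable and are close to how such results are typically proved, but they are unnecessary here.
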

\begin{proof}
The proof is a translation of a result that we obtained in our previous article:
\begin{equation}\label{traceonsteinberg}
\Tr(\chi_c^G f, \St_G) = (-1)^{n-1} \Tr \lhk \widehat \chi_{N_0} f_{n\alpha s}, \one_T(\delta_{P_0}^{1/2}) \rhk, 
\end{equation}
(see \cite[Prop.~1.13]{kret1}). NB: We wrote $\delta_{P_0}^{1/2}$ and not $\delta_{P_0}^{-1/2}$; the additional sign is there because the Jacquet module at $P_0$ of the Steinberg representation $\St_G$ is $\one_T(\delta_{P_0})$.

In case $f = f_{n\alpha s}$ then every monomial $X$ occurring in $\cS(f)$ is multiplicity free\footnote{Multiplicity free in the sense that no variable $X_i$ occurs with exponent $e_i > 1$ in $X$.}, and therefore the graph $\cG_X$ is in fact a \emph{path}. The above construction $X \mapsto \cG_X$ provides a bijection between the monomials that occur in $\cS(f)$ and the possible paths that go from the point $\uv_0$ to the point $\uv_r$. Finally $\widehat \chi_{N_0} X \neq 0$ if and only if the corresponding path is a Dyck path (see Equation~\eqref{thecondition}). This completes the proof. 
\end{proof}

Compact traces are compatible with twists:

\begin{lemma}\label{twists}
Let $\chi$ be an unramified character of $F^\times$, $\pi$ a smooth irreducible $G$ representation, and $f_{n\alpha \sigma} \in \cH_0(G)$ a function of Kottwitz. Then $\Tr(\chi_c^G f_{n\alpha \sigma}, \pi \otimes \chi) = \chi(\varpi^{\alpha s}_F) \cdot \Tr(\chi_c^G f, \pi)$.
\end{lemma}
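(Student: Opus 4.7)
The strategy is to exploit the homogeneity of the Satake transform of $f_{n\alpha\sigma}$. By Equation~\eqref{kottwitzfunction}, the transform $\cS(f_{n\alpha s})$ is a homogeneous polynomial of total degree $\alpha s$ in $X_1,\ldots,X_n$, and since $\cS$ is an algebra homomorphism and $f_{n\alpha\sigma}=f_{n\alpha\sigma_1}*\cdots*f_{n\alpha\sigma_r}$, the composite $\cS(f_{n\alpha\sigma})$ is homogeneous of degree $\alpha(\sigma_1+\cdots+\sigma_r)=\alpha s$. Scaling every Satake variable by a constant $z\in\C^{\times}$ therefore multiplies the value of $\cS(f_{n\alpha\sigma})$ at any point by $z^{\alpha s}$.

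I would then invoke the van Dijk formula for compact traces \cite[Prop.~1.5]{kret1}, which expresses $\Tr(\chi_c^G f_{n\alpha\sigma},\pi)$ as a sum over standard parabolic subgroups $P=MN$ of terms of the form $\eps_P\,\Tr(\widehat\chi_N f_{n\alpha\sigma}^{(P)},\pi_N)$. Taking the constant term $f^{(P)}$ and applying the truncation $\widehat\chi_N$ both act on Satake transforms by restricting to, respectively extracting, specific monomials, and each operation preserves the total degree in $X_1,\ldots,X_n$; hence $\cS(\widehat\chi_N f_{n\alpha\sigma}^{(P)})$ is still a homogeneous polynomial of degree $\alpha s$ on $\widehat M$. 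Further, because $\chi$ is unramified and $\chi\circ\det$ is a character of $G$, twisting by $\chi$ commutes with normalized Jacquet restriction, $(\pi\otimes\chi)_N=\pi_N\otimes(\chi\circ\det|_M)$; and since $\det|_M$ is the product of the block determinants on $M$, this twist multiplies every Hecke parameter of the unramified component of $\pi_N$ by $\chi(\varpi_F)$.

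Combining the two steps, every term of the van Dijk expansion is multiplied by $\chi(\varpi_F)^{\alpha s}=\chi(\varpi_F^{\alpha s})$ under the twist $\pi\mapsto\pi\otimes\chi$, and hence so is the full compact trace. The main, and essentially only, obstacle is bookkeeping: one must check that the modulus character $\delta_P^{1/2}$ appearing in normalized Jacquet restriction is unramified and independent of $\chi$ (and therefore harmless), and that the spherical Hecke function $\widehat\chi_N f_{n\alpha\sigma}^{(P)}\in\cH_0(M)$ pairs with $\pi_N$ only through its unramified component, on which the twist acts by the prescribed scaling of Satake parameters.
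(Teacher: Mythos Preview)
Your argument is correct, but it takes a longer route than necessary. The paper's own proof is simply a citation to \cite[Lem.~1.8]{kret1}, and the underlying reasoning there is almost certainly the following one-line observation, which bypasses the van~Dijk expansion entirely.

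Since $\cS(f_{n\alpha\sigma})$ is homogeneous of total degree $\alpha s$, the function $f_{n\alpha\sigma}$ itself is supported on the set $\{g\in G:\ v_F(\det g)=\alpha s\}$; this is the content of the grading compatibility of the Satake isomorphism. The pointwise product $\chi_c^G\cdot f_{n\alpha\sigma}$ is therefore also supported on that set. Now, as $\chi$ is unramified, $\chi(\det g)=\chi(\varpi_F^{\alpha s})$ for every $g$ in the support, so
\[
(\pi\otimes\chi)(\chi_c^G f_{n\alpha\sigma})=\int_G \chi_c^G(g)f_{n\alpha\sigma}(g)\,\chi(\det g)\,\pi(g)\,dg
=\chi(\varpi_F^{\alpha s})\cdot\pi(\chi_c^G f_{n\alpha\sigma}),
\]
and taking traces gives the lemma.

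Your detour through the van~Dijk formula, Jacquet modules, and truncated constant terms does work --- each $\widehat\chi_N f^{(P)}$ remains homogeneous of degree $\alpha s$, and the twist by $\chi\circ\det$ passes through normalized Jacquet restriction as you say --- but all of that machinery is unnecessary once you notice that the support condition already pins down $\chi(\det g)$ as a constant. The direct argument also avoids the minor awkwardness in your last paragraph about ``the unramified component of $\pi_N$'': no such spectral decomposition is needed, since the statement holds for arbitrary smooth irreducible $\pi$ (both sides vanish if $\pi$ is ramified, but this need not be argued separately).
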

\begin{proof}
\cite[Lem.~1.8]{kret1}. 
\end{proof}

\begin{lemma}\label{tracesteindyck}
Assume that $\pi$ is an essentially square integrable representation of the form $\Delta S$, where $S = \langle x,y\rangle $ is a segment of length $n$. Then
$$
\Tr(\chi_c^G f, \Delta \langle x,y\rangle ) = (-1)^{n-1} \cdot q^{\tfrac {s(n-s)}2} \cdot \Dyck_\strict(\ell(x), \ell(y+1)). 
$$
\end{lemma}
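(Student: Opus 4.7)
The plan is to reduce the claim to Lemma~\ref{steintrace} via an unramified twist, and then match the two Dyck polynomials by a geometric translation.

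Since $S$ has length $n$, we have $y = x + n - 1$, and the Steinberg representation is $\St_G = \Delta\langle \tfrac{1-n}{2}, \tfrac{n-1}{2}\rangle$. Set $c := x + \tfrac{n-1}{2} = \tfrac{x+y}{2}$. Twisting by the unramified character $\nu^c$ shifts each entry of the inducing segment by $c$, so the unique irreducible quotient also shifts: $\Delta\langle x,y\rangle = \St_G \otimes \nu^c$. Lemma~\ref{twists} then gives
$$
\Tr(\chi_c^G f_{n\alpha s}, \Delta\langle x,y\rangle) \;=\; \nu^c(\varpi_F^{\alpha s}) \cdot \Tr(\chi_c^G f_{n\alpha s}, \St_G) \;=\; q^{-\alpha c s} \cdot \Tr(\chi_c^G f_{n\alpha s}, \St_G),
$$
since $\nu(\varpi_F) = q^{-1}$. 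Plugging in Lemma~\ref{steintrace},
$$
\Tr(\chi_c^G f_{n\alpha s}, \Delta\langle x,y\rangle) \;=\; (-1)^{n-1} q^{s(n-s)/2} \cdot q^{-\alpha c s} \cdot \Dyck_\strict\!\left(\ell\!\left(\tfrac{1-n}{2}\right),\, \ell\!\left(\tfrac{n+1}{2}\right)\right).
$$

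It remains to match this with the target expression by proving the translation identity
$$
q^{-\alpha c s} \cdot \Dyck_\strict\!\left(\ell\!\left(\tfrac{1-n}{2}\right),\, \ell\!\left(\tfrac{n+1}{2}\right)\right) \;=\; \Dyck_\strict(\ell(x), \ell(y+1)).
$$
For this I would exhibit an explicit bijection: the translation $T_c \colon \uv \mapsto \uv + (c, \tfrac{s}{n}c)$ of $\Q^2$ maps $\ell(\tfrac{1-n}{2})$ to $\ell(x)$ and $\ell(\tfrac{n+1}{2})$ to $\ell(y+1)$, and since the translation vector lies on the line $\ell$ through the origin of slope $s/n$, the translate of a strict Dyck path is again a strict Dyck path (being strictly below $\ell$ is preserved because the distance to $\ell$ is unchanged). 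Under $T_c$, east steps keep weight $1$, while a north-east step $(a,b) \to (a+1, b+1)$ of weight $q^{-\alpha a}$ becomes one of weight $q^{-\alpha(a+c)}$, picking up a factor $q^{-\alpha c}$. Any path from $\ell(\tfrac{1-n}{2})$ to $\ell(\tfrac{n+1}{2})$ has total vertical displacement $s$, hence exactly $s$ north-east steps, so the total weight of each path is multiplied by $q^{-\alpha c s}$. Summing over paths yields the claimed identity.

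Combining the two steps gives the formula of the lemma. There is no real obstacle here: the proof is essentially a bookkeeping exercise once one spots that $\Delta\langle x,y\rangle$ is an unramified twist of $\St_G$. The only point requiring a little care is verifying that the power of $q$ produced by the translation of north-east steps exactly cancels the factor $\nu^c(\varpi_F^{\alpha s}) = q^{-\alpha c s}$ coming from the twist, which it does precisely because both are governed by the same invariant, namely the number $s$ of north-east steps / the vertical displacement from $\ell(\tfrac{1-n}{2})$ to $\ell(\tfrac{n+1}{2})$.
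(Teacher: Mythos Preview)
Your proof is correct and follows the same approach as the paper: twist $\Delta\langle x,y\rangle$ back to $\St_G$ via $\nu^{-c}$, invoke Lemma~\ref{twists} and Lemma~\ref{steintrace}. The paper's proof is two lines and leaves the matching of the twist factor $q^{-\alpha cs}$ with the shift of the Dyck polynomial implicit; your translation argument for $T_c$ makes this step explicit and is a welcome clarification.
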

\begin{proof}
The representation $(\Delta S) \otimes \nu^{-x + \tfrac {1-n}2}$ is the Steinberg representation, and so Lemma~\ref{steintrace} applies to it. The result then follows from Lemma~\ref{twists}.
\end{proof}

\subsection{Lattice $t$-paths and standard representations} We describe the compact traces on the standard representations of $G$ using ``$t$-paths''.

Let $t$ be a positive integer. Let $\ux = (\ux_a)$ and $\uy = (\uy_a)$ be two ordered lists of points in $\Q^2$, both of length $t$. A \emph{$t$-path from $\ux$ to $\uy$} is the datum consisting of, for each index $a \in \{1, 2, \ldots, t\}$, a path $L_a$ from the point $\ux_a$ to the point $\uy_a$. A $t$-path $(L_a)$ is called a \emph{Dyck $t$-path} if all the paths $L_a$ are Dyck paths. The Dyck path $(L_a)$ is called \emph{strict} if, for each index $a$, no point $\uv_i$ of $L_a$ other than $\uv_0$ and $\uv_r$ lies on the line $\ell$. The \emph{weight} $\weight(L_a)$ of a $t$-path $(L_a)$ is the product of the weights of the paths $L_a$, where $a$ ranges over the set $\{1, 2, \ldots, t\}$. We extend the definition of the strict Dyck polynomial $\Dyck_\strict(\ux, \uy) \in A^+$ also to $t$-paths: The polynomial $\Dyck_\strict(\ux, \uy) \in A^+$ is by definition the sum of the weights of the strict Dyck $t$-paths from the points $(\ux_a)$ to the points $(\uy_a)$. We have
\begin{equation}\label{dijckpolmayintersect}
\Dyck_\strict(\ux, \uy) = \prod_{a=1}^t \Dyck_\strict(\ux_a, \uy_a) \in A^+. 
\end{equation}

\begin{lemma}\label{traceonstandard}
Let $S_1 = \langle x_1, y_1\rangle , S_2 = \langle x_2, y_2\rangle , \ldots, S_t = \langle x_t, y_t\rangle $ be a list of segments and let $I$ be the representation $(\Delta S_1) \times (\Delta S_2) \times \cdots \times (\Delta S_t)$. Then the compact trace $\Tr(\chi_c^G f_{n\alpha s}, I)$ is equal to $(-1)^{n-t} \dyck_\strict(\ux, \uy)$, where for the indices $a = 1, \ldots, t$ we have $\ux_a := \ell(x_a)$ and $\uy_a := \ell(y_a + 1)$. 
\end{lemma}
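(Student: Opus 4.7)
My plan is to apply the compact version of van Dijk's formula \cite[Prop.~1.5]{kret1} to reduce the computation of $\Tr(\chi_c^G f_{n\alpha s}, I)$ to compact traces on the essentially square integrable factors $\Delta S_a$, where it is controlled by Lemma~\ref{tracesteindyck}. Writing $I = \Ind_P^G(\Delta S_1 \otimes \cdots \otimes \Delta S_t)$ with $P = MN$ the standard parabolic attached to the composition $(n_1, \ldots, n_t)$ and $n_a = \ell(S_a)$, that formula expresses the trace on $G$ in terms of the truncated constant term $\widehat\chi_N (f_{n\alpha s})^{(P)}$ paired against the representation $(\Delta S_1) \otimes \cdots \otimes (\Delta S_t)$ of the Levi $M = \prod_a \Gl_{n_a}$.

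I then decompose the Satake transform $\cS(f_{n\alpha s}) = q^{\alpha s(n-s)/2} \sum_{\#J = s} \prod_{i \in J} X_i^\alpha$ according to the block decomposition of $M$: each subset $J \subset \{1, \ldots, n\}$ is sorted by the integers $s_a := \#(J \cap \text{block}_a)$. This rewrites $\cS(f_{n\alpha s})$ as a sum, indexed by compositions $\underline s = (s_1, \ldots, s_t)$ of $s$ with $s_a \leq n_a$, of products over the blocks, each block contributing (up to an explicit power of $q$) the Satake transform of $f_{n_a \alpha s_a}$. Since compact traces against tensor-product representations of a product group factorize into products of compact traces on each factor whenever the underlying function is of product type, each such term contributes
\begin{equation*}
q^{(\ast)} \prod_{a = 1}^t \Tr\!\Big(\chi_c^{\Gl_{n_a}} f_{n_a \alpha s_a},\ \Delta S_a\Big),
\end{equation*}
and Lemma~\ref{tracesteindyck} evaluates each factor as $(-1)^{n_a - 1} q^{s_a(n_a - s_a)/2}$ times the strict Dyck polynomial for the line of slope $s_a/n_a$.

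The signs combine to $\prod_a (-1)^{n_a - 1} = (-1)^{n - t}$, and by \eqref{dijckpolmayintersect} the product of block-wise strict Dyck polynomials assembles into $\Dyck_\strict(\ux, \uy)$ once the endpoints sit on the global line $\ell$ of slope $s/n$. The main obstacle is matching the truncation $\widehat\chi_N$---a \emph{global} condition defined by the obtuse Weyl chamber of $P$ inside $G$---with the block-wise Dyck constraints produced by the essentially square integrable factors: the point is that $\widehat\chi_N$ forces the graph associated to each surviving Satake monomial to stay below the global line $\ell$ at the block boundaries, which, combined with the strict Dyck condition inside each block coming from Lemma~\ref{tracesteindyck}, is equivalent to the global graph being a strict Dyck $t$-path with respect to the endpoints $\ux_a = \ell(x_a)$, $\uy_a = \ell(y_a+1)$. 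In particular, $\underline s$ is forced to satisfy $s_a = s n_a / n$, the local slopes $s_a/n_a$ coincide with the global slope $s/n$, and the remaining $q$-powers---including the $\delta_P^{1/2}$-normalization built into normalized parabolic induction---combine into the announced expression.
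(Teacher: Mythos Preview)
Your overall strategy---apply the compact van Dijk formula to pass to the Levi, then factor into a product of compact traces on the $\Delta S_a$ and invoke Lemma~\ref{tracesteindyck}---is exactly the paper's. However, you have misidentified the truncation that comes out of \cite[Prop.~1.5]{kret1}. That proposition gives
\[
\Tr(\chi_c^G f,\ I)=\Tr\bigl(\chi_c^G f^{(P)},\ \Delta S_1\otimes\cdots\otimes\Delta S_t\bigr)
=\Tr\bigl(\chi_c^{M}\,\chi_M^G\, f^{(P)},\ \Delta S_1\otimes\cdots\otimes\Delta S_t\bigr),
\]
with $\chi_M^G$ the characteristic function of those $m\in M$ satisfying $\langle\varpi^G_\alpha,H_M(m)\rangle=0$ for all $\alpha\in\Delta_P$. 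It does \emph{not} produce the obtuse-chamber truncation $\widehat\chi_N$, which instead appears in the Clozel-type alternating sum over parabolics (used for the Steinberg and trivial representations). These are genuinely different constraints: $\widehat\chi_N$ imposes strict inequalities at block boundaries, whereas $\chi_M^G$ imposes \emph{equalities}.

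This matters for the geometry. The function $\chi_M^G$ applied to the sum over compositions $(s_a)$ of $s$ kills every term except the one with $s_a=s\,n_a/n$ (this is precisely the content of \cite[Prop.~1.10]{kret1}, which the paper simply cites). That single surviving term is what makes the block slopes $s_a/n_a$ all equal to the global slope $s/n$, so that each factor from Lemma~\ref{tracesteindyck} is a strict Dyck polynomial with endpoints on the \emph{same} line $\ell$, and their product is $\Dyck_\strict(\ux,\uy)$ via \eqref{dijckpolmayintersect}. Your description---that $\widehat\chi_N$ forces the graph to ``stay below $\ell$ at the block boundaries''---would instead force the path to be strictly below $\ell$ there, which is incompatible with the endpoints $\ux_a,\uy_a$ lying \emph{on} $\ell$, and would not by itself single out the composition $s_a=s\,n_a/n$. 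Replace $\widehat\chi_N$ by $\chi_M^G$ (or invoke \cite[Prop.~1.10]{kret1} directly), and your argument becomes the paper's proof.
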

\begin{remark}
The sign $(-1)^{n-t}$ is equal to $\eps_{M \cap P_0}$, where $M$ is the standard Levi subgroup of $G$ corresponding to the composition $\sum_{a = 1}^t \ell(n_a)$ of $n$. 
\end{remark}

\begin{proof} Let $P$ be the parabolic subgroup of $G$ corresponding to the composition $n = \sum_{a=1}^t \ell(S_a)$ of $n$. Let $\chi_M^G$ be the characteristic function on $M$ of the subset of elements $m \in M$ such that $\langle \varpi^G_\alpha, H_M(m) \rangle = 0$ for all $\alpha \in \Delta_P$.
By the integration formula of van Dijk for compact traces \cite[Prop.~1.5]{kret1} we have 
\begin{align}\label{traceproducttobe}
\Tr(\chi_c^G f, I) & = \Tr\lhk \chi_c^G f^{(P)}_{n\alpha s}, (\Delta S_1) \times (\Delta S_2) \times \cdots \times (\Delta S_t) \rhk \cr
& = \Tr\lhk \chi_c^M \chi_M^G f^{(P)}_{n\alpha s}, (\Delta S_1) \times (\Delta S_2) \times \cdots \times (\Delta S_t) \rhk. 
\end{align}
In our previous article we proved \cite[Prop.~1.10]{kret1} that the function $\chi_c^G f_{n \alpha s}^{(P)}$ is equal to 
\begin{equation}\label{constantterm}
q^{\alpha C(n_a, s_a)} f_{n\alpha s_1} \otimes f_{n\alpha s_2} \otimes \cdots \otimes f_{n\alpha s_t}, 
\end{equation}
where $s_a := \tfrac {n_a}ns$, and
$$
C(n_a, s_a) := \frac {s(n-s)}2 - \sum_{a = 1}^t \frac {s_a(n_a - s_a)}2. 
$$
The constant term in Equation~\eqref{constantterm} vanishes in case one of the numbers $s_a$ is non-integral. We have $(\chi_c^G f_{n \alpha s}^{(P)})^{(P_0 \cap M)} = \chi_M^G f_{n\alpha s}^{(P_0)}$. Consequently, one may rewrite the trace in Equation~\eqref{traceproducttobe} to the product
$$
q^{\alpha C(n_a, s)} \prod_{a = 1}^t \Tr( \chi_c^{G_{n_a}} f_{n_a \alpha s_a}, \Delta S_a), 
$$
By Lemma~\ref{tracesteindyck} we obtain
$$
q^{\alpha C(n_a, s)} \prod_{a = 1}^t (-1)^{n_a - 1} q^{\tfrac{n_a(n_a - s_a)}2 \alpha} \Dyck_\strict(\ell(x_a), \ell(y_a + 1)).
$$
Note that the condition that $s_a$ is integral precisely corresponds to the condition that the vertical distance between the point $\uy_a$ and the point $\ux_a$ has to be integral before paths can exist. Therefore the expression in this last Equation simplifies to the one stated in the Lemma and the proof is complete.
\end{proof}

\subsection{Non-crossing paths} We express the compact traces on Speh representations in terms of non-crossing lattice paths.

We call a $t$-path $(L_a)$ \emph{crossing} if there exists a couple of indices $a, b$ with $a \neq b$ such that the path $L_a$ has a point $\uv \in \Q^2$ in common with the path $L_b$. There is an important condition:
\begin{enumerate}
 \item[$\bullet$] The point $\uv$ of crossing must appear in the list of points $\uv_{a, i}$ that define $L_a$ and it must also occur in the list of points $\uv_{b, i}$ that define $L_b$. 
\end{enumerate}
(Because we work with rational coordinates, the point of intersection could be a point lying halfway a step of a path (for example). We are ruling out such possibilities.)

\begin{figure}[h]
\begin{center}
\medskip
\setlength{\unitlength}{0.00033333in}%
\begingroup\makeatletter\ifx\SetFigFont\undefined%
\gdef\SetFigFont#1#2#3#4#5{%
  \reset@font\fontsize{#1}{#2pt}%
  \fontfamily{#3}\fontseries{#4}\fontshape{#5}%
  \selectfont}%
\fi\endgroup%
\begin{picture}(7318,3718)(4442,-5520)
{\thinlines
\put(4501,-5461){\circle*{102}}
}%
{\put(5401,-5011){\circle*{102}}
}%
{\put(6301,-4561){\circle*{102}}
}%
{\put(9901,-2761){\circle*{102}}
}%
{\put(10801,-2311){\circle*{102}}
}%
{\put(11701,-1861){\circle*{102}}
}%
{\put(4501,-5461){\line( 2, 1){7200}}
}%
{\multiput(6301,-4561)(120.00000,0.00000){8}{\line( 1, 0){ 60.000}}
\multiput(7201,-4561)(251.53868,251.53868){4}{\line( 1, 1){145.384}}
\multiput(8101,-3661)(120.00000,0.00000){8}{\line( 1, 0){ 60.000}}
\multiput(9001,-3661)(251.53868,251.53868){4}{\line( 1, 1){145.384}}
}%
{\multiput(4501,-5461)(114.28571,0.00000){32}{\line( 1, 0){ 57.143}}
\multiput(8101,-5461)(287.88467,287.88467){13}{\line( 1, 1){145.384}}
}%
{\multiput(5401,-5011)(116.12903,0.00000){16}{\line( 1, 0){ 58.065}}
\multiput(7201,-5011)(304.61603,304.61603){2}{\line( 1, 1){145.384}}
\multiput(7651,-4561)(128.57143,0.00000){4}{\line( 1, 0){ 64.286}}
\multiput(8101,-4561)(304.61603,304.61603){2}{\line( 1, 1){145.384}}
\multiput(8551,-4111)(128.57143,0.00000){4}{\line( 1, 0){ 64.286}}
\multiput(9001,-4111)(275.76934,275.76934){7}{\line( 1, 1){145.384}}
}%
\end{picture}%
\end{center}
\begin{caption}{An example of a 3-path corresponding to the representation $\pi$ of $\Gl_{54}(F)$ defined by the segments $\langle 3, 20\rangle $, $\langle 2, 19\rangle $ and $\langle 1,18\rangle $. We take $s = 27$ and we take the permutation $w = (13) \in \iS_3'$.  
The 3 dots on the lower left hand corner are the points $\ux_1, \ux_2$ and $\ux_3$ in $\Q^2$ respectively; the points $\uy_1, \uy_2$ and $\uy_3$ are in the upper right corner. Observe that this $3$-path is non-strict. 
}
\end{caption}\label{figure1}
\end{figure}

We write $\Dyck^+_{\strict}(\ux, \uy)$ for the sum of the weights of the \emph{non-crossing} strict Dyck $t$-paths. Let $\pi$ be the Speh representation of $G$ associated to the Zelevinsky segments $\langle x_1, y_1\rangle , \langle x_2, y_2\rangle , \ldots, \langle x_t, y_t\rangle $ with $x_1 > x_2 > \ldots > x_t$ and $y_1 > y_2 > \ldots > y_t$. We define the points $\ux_a := \ell(x_a) \in \Q^2$ and $\uy_a := \ell(y_a+1) \in \Q^2$, for $a = 1, 2, \ldots, t$. The group $\iS_t$ acts on the free $\Q^2$-module $\Q^{2t} = (\Q^2)^t$ by sending the $a$-th standard basis vector $e_a \in (\Q^2)^t$ to the basis vector $e_{w(a)} \in (\Q^2)^t$. Thus if we have the vector $\ux \in \Q^{2t}$, then we get the new vector $\ux^w$ whose $a$-th coordinate $\ux^w_a \in \Q^{2t}$ is equal to $w(a)$-th coordinate of the vector $\ux$. 

\begin{remark}
The difference $\tfrac sn \cdot (y_a +1) - \tfrac sn \cdot x_{a}$ need not be integral. In that case there do not exist paths from the point $\ux^w_a \in \Q^2$ to $\uy_a \in \Q^2$.
\end{remark}

Let $\pi$ be a Speh representation of type $(h,t)$. The points $\ux_a \in \Q^2$ and $\uy_a \in \Q^2$ lie on the line $\ell \subset \Q^2$, and the point $\ux_a$ lies on the left of the point $\uy_a$ with horizontal distance $y_a + 1 - x_a = \ell(S_a) = h$. The two lists of points may overlap: There could exist couples of indices $(a, b)$ such that $\ux_a = \uy_b$. All points $\ux_a$ and $\uy_b$ are \emph{distinct} if we have $h \geq t$ (cf. Figure~1). 

Assume $h \geq t$. Then, because all the points $\ux_a$, $\uy_b$ are distinct, there is no permutation $w \in \iS_t$ such that one of the segments $S_a^w = \langle x_{w(a)}, y_a\rangle $ is empty or equal to $\{\star\}$ for some index $a$. In particular we have $\iS_t' = \iS_t$. 

\begin{definition}
To any point $\uv \in \Q^2$ we associate the \emph{invariant} $\rho(\uv) := p_2(\uv) \in \Q/\Z$ where $p_2 \colon \Q^2 \to \Q$ is projection on the second coordinate.
\end{definition}
\begin{remark} The horizontal distance between the point $\ux_b$ and the point $\uy_a$ is integral for all indices. Therefore the invariant of the first coordinate is not of interest. However, the vertical distance is the number $s_a^w = \tfrac sn n_a^w \in \Q$, which certainly need not be integral.
\end{remark}

Using this invariant we define a particular permutation $w_0 \in \iS_t$: 

\begin{definition}\label{wnuldef}
Assume $h \geq t$ and \emph{assume} that for each invariant $\rho \in \Q/\Z$ the number of indices $a$ such that the point $\ux_a$ has invariant $\rho$ is equal to the number of indices $a$ such that the point $\uy_a$ has invariant $\rho$. The element $w_0 \in \iS_t$ is the unique permutation such that for all indices $a, b$ we have
\begin{equation}
\lhk a < b \phantom{\int} \right. \textup{and} \left.\phantom{\int} \rho(\ux_a) = \rho(\ux_b) \rhk 
\Longrightarrow \lhk w_0^{-1}(a) > w_0^{-1}(b) \phantom{\int} \right. \textup{and} \left. \phantom{\int} \rho(\uy_a) = \rho(\uy_b) = \rho(\ux_a) \rhk. 
\end{equation}
\end{definition}

\begin{remark}
Observe that the permutation $w_0$ depends on the integer $s$ because the heights of the points $\ux_a$, $\uy_a$, and therefore also their invariants depend on $s$.
\end{remark}

\begin{remark} If our assumption on the invariants $\rho(\ux_a)$ and $\rho(\uy_a)$ in Definition~\ref{wnuldef} is \emph{not} satisfied, then the permutation $w_0$ cannot exist because it has to induce bijections between sets of different cardinality. 
\end{remark}

One could also define the permutation $w_0 \in \iS_t$ inductively: First the index $w_0^{-1}(t) \in \{1, 2, 3, \ldots, t\}$ is the minimal index $b$ such that the points $\ux_t$ and $\uy_b$ have the same invariant. Next, the index $w_0^{-1}(t-1) \in \{1, 2, 3, \ldots, t\}$ is the minimal index $b$, different from $w^{-1}_0(t)$, such that $\ux_a$ and $\uy_b$ have the same invariant. And so on: $w_0^{-1}(t-i) \in \{1, 2, 3, \ldots, t\}$ is the minimal index $b$ different from the previously chosen indices $w_0^{-1}(t)$, $w^{-1}_0(t-1)$, \ldots, $w_0^{-1}(t-i+1)$, such that the points $\uy_{b}$ and $\ux_{t-i}$ have the same invariant. 

\begin{lemma}\label{combinatoriallemma} Let $\pi$ be a Speh representation with parameters $h, t$ with $h \geq t$. Let $d$ be the greatest common divisor of $n$ and $s$ and write $m$ for the quotient $\tfrac nd$. Define the points $\ux_a := \ell(x_a)$ and $\uy_a := \ell(y_a + 1)$. Let $d$ be the greatest common divisor of $n$ and $s$, and write $m$ for the number $\tfrac nd \in \Z$. The following two statements are equivalent:
\begin{enumerate}
\item[(\textit{i})] for each invariant $\rho \in \Q/\Z$ the number of indices $a$ such that the point $\ux_a$ has invariant $\rho$ is equal to the number of indices $a$ such that the point $\uy_a$ has invariant $\rho$;
\item[(\textit{ii})] $m$ divides $t$ or $m$ divides $h$.
\end{enumerate}
\end{lemma}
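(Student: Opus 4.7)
My plan is to translate condition (i) into a statement about translation-invariance of the multiset $\{0, 1, \ldots, t-1\}$ in $\Z/m\Z$, and then handle the latter by an elementary case analysis.

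First I would compute the invariants. Using $x_a = \tfrac{t-h}{2} - (a-1)$ and $y_a + 1 = \tfrac{t+h}{2} - (a-1)$, together with $\tfrac{s}{n} = \tfrac{k}{m}$ for $k := s/d$ (so that $\gcd(k,m) = 1$), I find
$$
\rho(\ux_a) - \rho(\ux_1) \equiv -\tfrac{k(a-1)}{m}, \qquad \rho(\uy_a) - \rho(\ux_a) \equiv \tfrac{kh}{m} \pmod{\Z}.
$$
Both multisets of invariants therefore lie in the coset $\rho(\ux_1) + \tfrac{1}{m}\Z/\Z$. After subtracting $\rho(\ux_1)$ and multiplying by $m$, they become two multisets $X = \{-k(a-1) : 1 \le a \le t\}$ and $Y = X + kh$ in $\Z/m\Z$. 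Applying the bijection $x \mapsto -k^{-1} x$, these get identified respectively with $M_t := \{0, 1, 2, \ldots, t-1\} \bmod m$ and $M_t - h$. Hence condition (i) is equivalent to the single equality
$$
M_t = M_t + h \textup{ in } \Z/m\Z.
$$

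The implication (ii) $\Rightarrow$ (i) is immediate: if $m \mid h$, the translation by $h$ is the identity; if $m \mid t$, each residue class mod $m$ appears in $M_t$ exactly $t/m$ times, so $M_t$ is invariant under every translation. For the converse, suppose $M_t = M_t + h$ and $m \nmid t$. Writing $t = qm + r$ with $0 < r < m$, the residues $0, 1, \ldots, r-1$ appear in $M_t$ with multiplicity $q + 1$, and the remaining residues with multiplicity $q$. Invariance of $M_t$ under translation by $h$ then forces invariance of the high-multiplicity set $S := \{0, 1, \ldots, r-1\}$, a proper nonempty interval of $\Z/m\Z$. A brief case analysis finishes the argument: if $0 \le h \le m - r$, then $S + h = \{h, h+1, \ldots, h + r - 1\}$ is a contiguous block in $\{0, \ldots, m-1\}$, and equality with $S$ forces $h = 0$; if instead $h > m - r$, then $S + h$ wraps around and contains $m - 1$, which does not lie in $S$ since $r < m$. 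Thus $h \equiv 0 \pmod m$, i.e., $m \mid h$.

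The main (rather mild) obstacle is the bookkeeping: the coordinates $x_a$ need not be integral, so the invariants $\rho(\ux_a)$ live in a coset of $\tfrac{1}{m}\Z/\Z$ rather than in $\tfrac{1}{m}\Z/\Z$ itself, but this is harmless since (i) depends only on differences of invariants. The hypothesis $h \ge t$ does not intervene in this combinatorial equivalence; it is only the standing assumption of the subsection, ensuring that the two lists of endpoints are distinct.
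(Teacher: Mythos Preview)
Your proof is correct and takes a genuinely different route from the paper's.

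The paper argues the three implications separately. For $m\mid t\Rightarrow(i)$ and $m\mid h\Rightarrow(i)$ it uses the recursion $\rho(\ux_{a+1})=\rho(\ux_a)-\tfrac{s}{n}$ directly. For the contrapositive it first strips off full blocks of $m$ consecutive indices from each list (since such a block realizes every residue exactly once), reducing to the situation $1\le t<m$; there each invariant appears at most once, and from $m\nmid h$ one locates an explicit class $\xi=\rho(\ux_1)+\tfrac{s}{n}$ that lies among the $\rho(\uy_a)$ but not among the $\rho(\ux_a)$.

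Your argument instead packages everything into a single statement about translation-invariance of the multiset $M_t=\{0,1,\dots,t-1\}$ in $\Z/m\Z$, and then exploits the multiplicity profile: the high-multiplicity residues form a proper nonempty interval, and an interval in $\Z/m\Z$ cannot be fixed by a nontrivial shift. This is cleaner conceptually and makes the symmetry in $h$ and $t$ transparent (as you note, $h\ge t$ is irrelevant to the equivalence). The paper's approach, by contrast, is more hands-on and produces an explicit witness invariant, which matches the style of the surrounding arguments where concrete paths are constructed. One small notational point: in your case analysis you should make explicit that $h$ is being replaced by its residue in $\{0,\dots,m-1\}$; the argument is of course unaffected.
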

\begin{remark}
The number $m$ is the order of the element $\tfrac sn$ in the torsion group $\Q/\Z$. 
\end{remark}

\begin{proof}
We first claim that ``$m | t \Rightarrow (i)$''. We have
\begin{equation}\label{relation}
\rho(\ux_{a + 1}) = \rho(\ux_a) - \tfrac sn \in \Q/\Z
\end{equation}
and the same relation for the points $\uy_a$. Therefore, if $m$ divides $t$, then the possible classes of the points $\ux_a$ are equally distributed over the subset $\tfrac sn\Z/\Z \subset \Q/\Z$, and every invariant occurs precisely $\tfrac tm$ times. The same statement also holds for the points $\uy_a$, and in particular $(i)$ is true. This proves the claim. 

We now claim that ``$m|h \Rightarrow (i)$''. Assume $m|h$. Then the invariants of $\ux_a$ and $\uy_a$ are the same for all indices $a$. Thus $(i)$ is true.

We prove that ``$(m {\not|\,} t \textup{ and }m {\not| \,} h) \Rightarrow ((i) \textup{ is false})$''. 
Assume $m {\not|\,} t$ and $m {\not| \,} h$. We first reduce to the case where $t<m$. Assume $t \geq m$. Consider the elements
\begin{equation}\label{weghalen}
\rho(\ux_1), \rho(\ux_2), \ldots, \rho(\ux_m), \quand \rho(\uy_1), \rho(\uy_2), \ldots, \rho(\uy_m) \in \Q/\Z.
\end{equation}
By Equation~\eqref{relation} every possible class in $\tfrac sn\Z/\Z$ occurs precisely once in both lists. Thus, the truth value of $(i)$ is not affected if we remove the elements $\ux_1, \ux_2, \ldots, \ux_m$ and $\uy_1, \uy_2, \ldots, \uy_m$ from the respective lists. Renumber the indices and repeat this argument until $t < m$. Because we assumed that $t$ did not divide $m$ there remains a positive number of elements in the list $\ux_a$ and $\uy_a$. We renumber so that the indices range from $1$ to $t$. Then we have reduced to the case where $1 \leq t < m$. 

Now look at the two lists
$\rho(\ux_1), \rho(\ux_2), \ldots, \rho(\ux_t)$ and $\rho(\uy_1), \rho(\uy_2), \ldots, \rho(\uy_t)$. 
In both lists every class in $\Q/\Z$ occurs \emph{at most} once. We assumed that $m$ does not divide $h$, and therefore $\rho(\ux_1) \neq \rho(\uy_1)$. If there does not exist an index $b$ such that $\rho(\ux_1) = \rho(\uy_b)$, then (i) is false and we are done. Thus assume $\rho(\ux_1) = \rho(\uy_b)$ for some $1 < b \leq t$. By Equation~\eqref{relation} we then have
$\rho(\uy_{b-1}) = \rho(\uy_b) + \tfrac sn = \rho(\ux_1) + \tfrac sn$. 
The invariant $\xi := \rho(\ux_1) + \tfrac sn \in \Q/\Z$ does not occur in the list $\rho(\ux_1), \rho(\ux_2), \ldots, \rho(\ux_t)$. Thus, we have found an invariant, namely $\xi$, occurring once in the list of invariants of the elements $\uy_a$ and does not occur in the list of invariants of the elements $\ux_a$. This contradicts $(i)$ and completes the proof.
\end{proof}

\begin{theorem}\label{nognietzosimpel} Let $\pi$ be a Speh representation with parameters $h, t$ with $h \geq t$. Let $d$ be the greatest common divisor of $n$ and $s$ and write $m$ for the quotient $\tfrac nd$. Define the points $\ux_a := \ell(x_a)$ and $\uy_a := \ell(y_a + 1)$. The compact trace $\Tr(\chi_c^G f_{n\alpha s}, \pi)$ on $\pi$ is non-zero if and only if $m$ divides $t$ or $m$ divides $h$, and if the compact trace is non-zero, then it is equal to $(-1)^{n-t} \sign(w_0) q^{\tfrac {s(n-s)}2 \alpha} \Dyck^+_{\strict}(\ux^{w_0}, \uy)$, where the permutation $w_0 \in \iS_t$ depends on $s$ and is defined in Definition~\ref{wnuldef}. 
\end{theorem}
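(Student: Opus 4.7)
The plan is to combine Tadic's determinantal formula (Theorem~\ref{LMthm}) with the compact trace computation on standard representations (Lemma~\ref{traceonstandard}) and then invoke the Lindstr\"om--Gessel--Viennot lemma. First, taking compact traces term-by-term in the identity $\pi = \sum_{w \in \iS_t} \sign(w)\, I_w$, and using $\iS_t' = \iS_t$ (which holds because $h \geq t$), each $I_w$ is a standard representation with exactly $t$ nontrivial segments, so Lemma~\ref{traceonstandard} applies with uniform sign $(-1)^{n-t}$. Invoking the product formula \eqref{dijckpolmayintersect}, the signed sum collapses into a $t \times t$ determinant:
\[
\Tr(\chi_c^G f_{n\alpha s}, \pi) = (-1)^{n-t} q^{s(n-s)\alpha/2} \det\!\bigl[\Dyck_\strict(\ux_b, \uy_a)\bigr]_{a,b = 1}^t.
\]

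Second, I would analyze the vanishing. An entry $\Dyck_\strict(\ux_b, \uy_a)$ vanishes unless $\rho(\ux_b) = \rho(\uy_a)$, because otherwise the vertical rise of any hypothetical lattice path between the two points is non-integral. A nonzero contribution to the determinant expansion therefore requires a permutation matching invariants perfectly between the $\ux_b$ and the $\uy_a$; such a permutation exists if and only if $m \mid t$ or $m \mid h$, by Lemma~\ref{combinatoriallemma}. This gives the stated nonvanishing criterion.

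Third, in the nonvanishing case I would apply the Lindstr\"om--Gessel--Viennot lemma, which expands the determinant as a signed sum $\sum_{(L_a)} \sign(\sigma_L)\, \weight(L_a)$ over non-crossing strict Dyck $t$-path systems, with $\sigma_L$ recording which starting point each path emanates from (i.e.\ $L_a$ goes from $\ux_{\sigma_L(a)}$ to $\uy_a$). The main obstacle is to prove that every non-crossing system realizes the same permutation $\sigma_L = w_0$. To establish this I would group indices by invariant class: because only points sharing an invariant can be connected by a path, $\sigma_L$ restricts to a bijection inside each class. Within a single class the starting points $\ux_b$ and the ending points $\uy_a$ are linearly ordered along $\ell$, and the non-crossing condition combined with the Dyck (below-$\ell$) constraint forces the nested pairing in which the rightmost $\ux$ is matched with the leftmost $\uy$, the next rightmost with the next leftmost, and so on. This is precisely the inductive characterization of $w_0$ recorded after Definition~\ref{wnuldef}. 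Extracting $\sign(w_0)$ from the sum then leaves $\Dyck^+_\strict(\ux^{w_0}, \uy)$ and yields the claimed formula.
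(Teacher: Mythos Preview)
Your approach is essentially the one the paper takes: Tadic's formula reduces to an alternating sum over $\iS_t$, Lemma~\ref{traceonstandard} converts each term to a product of strict Dyck polynomials, and a Lindstr\"om--Gessel--Viennot cancellation leaves only non-crossing systems. Packaging the alternating sum as a determinant is a cosmetic difference.

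There is one genuine gap. Your second paragraph establishes only the implication ``trace nonzero $\Rightarrow$ ($m \mid t$ or $m \mid h$)'': a nonzero expansion term forces an invariant-matching permutation, and Lemma~\ref{combinatoriallemma} gives the divisibility. The converse does not follow from the mere existence of $w_0$. Once you have shown that every non-crossing system realizes $w_0$, the determinant equals $\sign(w_0)\,\Dyck^+_{\strict}(\ux^{w_0},\uy)$, which is a sum of monomials with nonnegative coefficients; to conclude it is nonzero you must exhibit at least one non-crossing strict Dyck $t$-path for the configuration $(\ux^{w_0},\uy)$. The paper does this explicitly: for each $a$, take $n_a^{w_0}-s_a^{w_0}$ east steps followed by $s_a^{w_0}$ north-east steps. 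Without this construction your ``if and only if'' is only an ``only if''.

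A second point is compressed in your third paragraph and deserves to be made explicit. The notion of ``crossing'' in this paper means sharing a lattice vertex, not merely topological intersection; with rational starting heights two paths can pass through each other without sharing a vertex, and LGV cancellation does not apply to such pairs. Your nested-pairing argument is correct, but it relies on the observation that two strict Dyck paths whose endpoints have the \emph{same} invariant and which topologically intersect must in fact share a vertex (their vertex sets lie on a common translate of $\Z^2$). The paper isolates this as observation (Obs) and uses it to upgrade the topological crossing forced by the strict-below-$\ell$ condition into an actual crossing; you should state this step rather than absorb it into ``forces the nested pairing''.
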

\begin{remark}
Perhaps one could extend this Theorem to obtain formulas for compact traces on Ladder representations as considered by Minguez and Lapid in \cite{LM}. 
\end{remark}
\begin{proof} [Proof of Theorem~\ref{nognietzosimpel}]
For a technical reason we assume that $0 < s < n$. In case $s = 0$ we have $f_{n\alpha s} = \one_{\Gl_n(\cO_F)}$. All the elements in $\Gl_n(\cO_F)$ are compact and therefore $\chi_c^G f_{n\alpha 0} = f_{n\alpha 0}$. The compact trace becomes the usual trace and the theorem is easy. A similar argument applies in case $s = n$. Thus we may indeed assume $0 < s < n$. 

By Theorem~\ref{LMthm} the compact trace $\Tr(\chi_c^G f, \pi)$ is equal to the combinatorial sum $\sum_{w \in \iS_t} \sign(w) \Tr(\chi_c^G f, I_w)$ for any Hecke operator $f \in \cH(G)$. We apply it to the Kottwitz functions $f = f_{n\alpha s}$. We have $\iS_t' = \iS_t$ because $h \geq t$. Let $w \in \iS_t$. Recall that van Dijk's formula is also true for truncated traces \cite[Prop.~1.5]{kret1}, and thus for any $w \in \iS_t'$ the trace $\Tr(\chi_c^G f, I_w)$ equals $\Tr(\chi_c^G f^{(P_w)}, \Delta_w)$. Thus we have the formula 
\begin{equation}\label{speh.A}
\Tr(\chi_c^G f, \pi) = \sum_{w \in \iS_t} \sign(w) \cdot \Tr(\chi_c^G f^{(P_w)}, \Delta_w).
\end{equation}
By Lemma~\ref{traceonstandard} we get for $f = f_{n \alpha s}$,
\begin{equation}\label{pathsum}
\Tr(\chi_c^G f, \pi) = q^{\tfrac {s(n-s)}2 \alpha} \sum_{w \in \iS_t} \sign(w) \cdot \eps_{P_0 \cap M_w} \cdot \Dyck_{\strict}(\ux^w, \uy). 
\end{equation}

We apply a standard combinatorial argument\footnote{The \emph{Lindstr\"om-Gessel-Viennot Lemma}. The argument appears in many (almost) equivalent forms in the literature. We learned and essentially copied it from Stanley's book \cite[Thm~2.7.1]{MR2868112}. Note however that, strictly speaking, the Theorem 2.7.1 there does not apply as stated at this point in our argument. In the paragraph that follows we show that Stanley's argument may be adapted so that it does apply to our situation.}. Put the \emph{lexicographical order} $<$ on $\Q^2$:
$$
\forall \uu, \uv \in \Q^2: \quad (\uu < \uv) \Longleftrightarrow (\uu_1 < \uv_1 \textup{ or } (\uu_1 = \uv_1 \textup{ and } \uu_2 < \uv_2)).
$$
Let $(L_a)$ be a strict Dyck $t$-path from the points $\ux^w$ to the points $\uy$, and assume that $(L_a)$ has at least one point of crossing. Let $\uv \in \Q^2$ be the point chosen among the points of crossing which is minimal for the lexicographical order on $\Q^2$. Let $(a, b)$ a couple of different indices, minimal for the lexicographical order on the set of all such couples, such that $\uv$ lies on the path $L_a$ and also on the path $L_b$. We define a new path $L_a'$, defined by following the steps of $L_b$ until the point $\uv$ and then following the steps of the path $L_a$. We define $L_b'$ by following $L_a$ until the point $\uv$ and then continuing the path $L_b$. For the indices $c$ with $c \neq a, b$ we define $L_c' := L_c$. Observe that $(L'_a)$ is a $t$-path from the points $\ux^{(ab)w}$ to the points $\uy$. Furthermore, it is a Dyck path (with respect to this \emph{new} configuration of points), and we have $\weight(L_a) = \weight(L_a')$ because the weight is the product of the weights of the steps, and only the order of the steps has changed in the construction $(L_a) \mapsto (L_a')$. The construction is self-inverse: If we apply the construction to the path $(L_a')$ then we re-obtain $(L_a)$. Both paths $(L_a)$ and $(L_a')$ occur in the sum of Equation~\eqref{pathsum}. The sign $\eps_{P_0 \cap M_w}$ is equal to $(-1)^{n-1}(-1)^{t-1}(-1)^{\# \{ c \in \{1, 2, \ldots, t\} \, |\, \ux_{w(c)} = \uy_c\}}$. By the assumption that $h \geq t$, the points in the list $\ux$ are all different to the points in the list $\uy$, and therefore the sign $\eps_{P_0 \cap M_w}$ equals $(-1)^{n-t}$ (and does not depend on the permutation $w$). The sign of the permutation $w$ is opposite to the sign of $(ab)w$. Consequently, the contributions of the paths $(L_a)$ and $(L_a')$ to Equation~\eqref{pathsum} cancel, and only the non-crossing paths remain in the sum. We find
\begin{equation}\label{pathsumnonint}
\Tr(\chi_c^G f, \pi) = (-1)^{n - t} q^{ \tfrac {s(n-s)}2} \sum_{w \in \iS_t} \sign(w) \cdot \Dyck^+_{\strict}(\ux^w, \uy). 
\end{equation}

We need a second notion of crossing paths, called \emph{topological intersection}. Here we mean that, when the $t$-path $L$ is drawn in the plane $\Q^2$ there is a point $\ux \in \Q^2$ lying on two paths $L_a$, $L_b$ occurring in $L$. Because we allow rational coordinates, topological intersection is not the same as intersection: It is easy to give an example of a $2$-path, which, when drawn in the plane $\Q^2$ has one topological intersection point $\ux \in \Q^2$ but the point $\ux$ does not occur in the lists of points $\uv_{1, 0}, \uv_{1, 1}, \ldots, \uv_{1, r_1}$, $\uv_{2, 0}, \uv_{2, 1}, \ldots, \uv_{2, r_2}$ defining the $2$-path. Such paths are considered non-crossing under our definition, even though they may have topological intersection points\footnote{If one uses the wrong, topological notion of intersection, then the proof breaks at 8 lines below Equation~\eqref{pathsum}: The constructed `path' $(L_c')$ is not a path.}.

We claim that there is at most one permutation $w \in \iS_t$ such that the polynomial $\Dyck^+(\ux^w, \uy)$ is non-zero, and that this permutation is the one we defined in Definition~\ref{wnuldef}. Let $\iS_t''$ be the set of all permutations such that $\Dyck^+(\ux^w, \uy) \neq 0$, and assume that $\iS_t''$ contains an element $w \in \iS_t''$. We first make the following observation:
\begin{enumerate}
\item[(Obs)] To any point $\uv \in \Q^2$ we associated the \emph{invariant} $\rho(\uv) := p_2(\uv) \in \Q/\Z$. The horizontal distance between the point $\ux_{w(a)}$ and the point $\uy_a$ is the number $n_a^w$. The vertical distance is the number $s_a^w = \tfrac sn n_a^w \in \Q$. Because $w \in \iS_t''$ there exists a path from the point $\ux_{w(a)}$ to the point $\uy_{a}$. Consequently $s_a^w$ is integral. This implies that $\rho(\ux_{w(a)}) = \rho(\uy_a)$ for all indices $a$ and in particular the invariant of the point $\ux_{w(a)}$ is independent of $w \in \iS_t''$. 
\end{enumerate}
We show inductively that $w$ is uniquely determined. We start with showing that the index $w^{-1}(t) \in \lbr1, 2, \ldots, t \rbr$ is determined. We claim that $w^{-1}(t) \in \lbr 1, 2, \ldots, t \rbr$ is the minimal index such that the point $\uy_{w^{-1}(t)}$ has the same invariant as $\ux_t$. To see that this claim is true, suppose for a contradiction that it is false, \ie assume the index $w^{-1}(t)$ is \emph{not} minimal. Then there is an index $b$ strictly smaller than $w^{-1}(t)$ such that $\uy_b$ has the same invariant as $\ux_t$. By the observation (Obs) there exists an index $a \neq t$ such that $\ux_a$ has the same invariant as $\ux_t$ and such that $\ux_a$ is connected to $\uy_b$. Draw a picture (see Figure~2) to see that the paths $L_a$ and $L_t$ must intersect topologically. But, by construction, the invariants of $\ux_a$ and $\ux_t$ are the same. Therefore, any topological intersection point of the paths $L_a$ and $L_t$ is a point of crossing. Thus, the paths $L_a$ and $L_b$ are crossing. This is a contradiction, and therefore the claim is true. Thus the value $w^{-1}(t)$ is determined. 

\begin{figure}
\begin{center}
\setlength{\unitlength}{0.00033333in}
\begingroup\makeatletter\ifx\SetFigFont\undefined%
\gdef\SetFigFont#1#2#3#4#5{%
  \reset@font\fontsize{#1}{#2pt}%
  \fontfamily{#3}\fontseries{#4}\fontshape{#5}%
  \selectfont}%
\fi\endgroup%
{\renewcommand{\dashlinestretch}{30}
\begin{picture}(7224,5889)(0,-10)
\put(2037,1587){\circle*{66}}
\put(1137,912){\circle*{66}}
\put(5862,4287){\circle*{66}}
\put(4962,3612){\circle*{66}}
\drawline(12,462)(7212,462)
\drawline(462,12)(462,5862)
\drawline(462,462)(6537,4737)
\drawline(462,462)(6537,4737)
\drawline(2037,1587)(2038,1587)(2040,1586)
	(2043,1584)(2048,1582)(2056,1579)
	(2066,1575)(2079,1569)(2095,1563)
	(2114,1556)(2135,1548)(2160,1539)
	(2186,1530)(2216,1520)(2247,1510)
	(2281,1500)(2316,1491)(2353,1482)
	(2392,1473)(2433,1465)(2475,1458)
	(2518,1452)(2564,1447)(2611,1444)
	(2660,1442)(2711,1442)(2765,1444)
	(2821,1448)(2881,1454)(2943,1464)
	(3009,1476)(3079,1491)(3151,1510)
	(3228,1532)(3306,1558)(3387,1587)
	(3457,1615)(3526,1644)(3593,1675)
	(3657,1705)(3718,1735)(3775,1764)
	(3829,1792)(3878,1818)(3922,1842)
	(3963,1864)(3999,1885)(4032,1904)
	(4061,1921)(4087,1937)(4111,1951)
	(4131,1964)(4150,1976)(4167,1987)
	(4183,1998)(4198,2008)(4212,2018)
	(4226,2029)(4241,2040)(4256,2052)
	(4273,2065)(4291,2080)(4310,2096)
	(4333,2115)(4357,2135)(4384,2159)
	(4415,2185)(4449,2215)(4486,2248)
	(4527,2285)(4572,2326)(4620,2371)
	(4672,2419)(4727,2472)(4784,2528)
	(4843,2587)(4903,2648)(4962,2712)
	(5020,2777)(5076,2841)(5129,2906)
	(5180,2969)(5228,3032)(5273,3092)
	(5315,3152)(5355,3209)(5392,3265)
	(5427,3320)(5460,3374)(5490,3426)
	(5519,3477)(5546,3527)(5572,3576)
	(5596,3624)(5619,3671)(5641,3718)
	(5662,3763)(5682,3808)(5700,3851)
	(5718,3893)(5735,3934)(5750,3974)
	(5765,4012)(5778,4048)(5791,4082)
	(5803,4114)(5813,4143)(5822,4169)
	(5831,4193)(5838,4214)(5844,4232)
	(5849,4247)(5853,4260)(5856,4269)
	(5859,4276)(5860,4281)(5861,4285)
	(5862,4286)(5862,4287)
\drawline(1137,912)(1138,911)(1140,909)
	(1143,906)(1148,902)(1154,897)
	(1161,891)(1170,883)(1181,875)
	(1194,865)(1209,855)(1225,843)
	(1244,832)(1264,820)(1285,808)
	(1309,797)(1334,785)(1361,775)
	(1389,765)(1420,757)(1452,750)
	(1487,744)(1524,740)(1564,738)
	(1607,738)(1653,740)(1702,745)
	(1756,753)(1814,764)(1877,778)
	(1944,797)(2017,819)(2094,846)
	(2176,877)(2262,912)(2328,941)
	(2395,972)(2461,1004)(2527,1036)
	(2591,1068)(2652,1100)(2711,1131)
	(2767,1161)(2820,1190)(2870,1217)
	(2917,1243)(2960,1268)(3001,1291)
	(3039,1313)(3074,1333)(3107,1352)
	(3138,1370)(3167,1387)(3193,1402)
	(3219,1418)(3243,1432)(3267,1446)
	(3290,1460)(3312,1475)(3334,1489)
	(3357,1503)(3380,1518)(3404,1534)
	(3429,1551)(3455,1569)(3483,1589)
	(3513,1610)(3544,1633)(3578,1657)
	(3614,1685)(3653,1714)(3694,1746)
	(3738,1780)(3785,1817)(3834,1857)
	(3886,1900)(3940,1945)(3996,1993)
	(4054,2044)(4112,2096)(4171,2150)
	(4230,2206)(4287,2262)(4356,2333)
	(4421,2403)(4481,2471)(4535,2537)
	(4585,2600)(4631,2661)(4671,2720)
	(4708,2776)(4741,2829)(4770,2881)
	(4796,2931)(4819,2979)(4839,3025)
	(4858,3070)(4874,3113)(4888,3155)
	(4900,3196)(4911,3236)(4920,3275)
	(4928,3312)(4935,3347)(4940,3381)
	(4945,3413)(4949,3443)(4952,3471)
	(4955,3497)(4957,3520)(4959,3540)
	(4960,3558)(4961,3572)(4961,3585)
	(4962,3594)(4962,3601)(4962,3606)
	(4962,3610)(4962,3611)(4962,3612)
\end{picture}
}
\end{center}
\begin{caption}{ The leftmost point $\ux_t$ is connected to the third point $\uy_{w^{-1}(t)}$, and the second point $\ux_a$ is connected to the last point $\uy_b$. Any $2$-path staying below the line $\ell$ must self-intersect topologically. }
\end{caption}
\label{figure2}
\end{figure}

We now look at the index $t- 1$. The point $\ux_{t-1}$ is connected to the point $\uy_{w^{-1}(t-1)}$. We claim that $w^{-1}(t-1) \in \lbr 1, 2, \ldots, t \rbr$ is the minimal index, different from $w^{-1}(t)$, such that $\uy_{w^{-1}(t-1)}$ has the same invariant as $\ux_{t-1}$. The proof of this claim is the same as the one we explained for the index $t$. We may repeat the same argument for the remaining indices $t-2$, $t-3$, etc. Consequently $w$ is uniquely determined by its properties, and equal to the permutation $w_0$ defined in Definition~\ref{wnuldef}.

We proved that if the set $\iS_t''$ is non-empty, then it contains precisely one element, and this element is equal to $w_0$. Therefore, if the compact trace does not vanish, then $m$ must divide $t$ or $m$ divides $h$ by Lemma~\ref{combinatoriallemma}. Inversely, assume that $m$ divides $t$ or $m$ divides $h$. The permutation $w_0 \in \iS_t$ exists by Lemma~\ref{combinatoriallemma}. We claim that $\Dyck^+_{\strict}(\ux^{w_0}, \uy) \neq 0$, so that $w_0 \in \iS_t''$. To prove this, it suffices to construct one non-crossing $t$-path from the points $\ux^{w_0}$ to the points $\uy$. This is easy (see Figure~3): Let $a$ be an index, and write $n_a^{w_0}$ for the horizontal distance between $\ux^{w_0}_a$ and $\uy_a$ and $s_a^{w_0}$ for the vertical distance. The path $L_a$ from $\ux^{w_0}_a$ to $\uy_a$ is defined to be the path taking $n_a^{w_0} - s_a^{w_0}$ horizontal eastward steps, and then $s_a^{w_0}$ diagonal northeastward steps. Then $(L_a)$ is a strict non-crossing $t$-path and therefore $\Dyck^+_{\strict}(\ux^{w_0}, \uy)$ is non-zero. This completes the proof.
\end{proof}

\begin{figure}
\begin{center}
\setlength{\unitlength}{0.00033333in}
\begingroup\makeatletter\ifx\SetFigFont\undefined%
\gdef\SetFigFont#1#2#3#4#5{%
  \reset@font\fontsize{#1}{#2pt}%
  \fontfamily{#3}\fontseries{#4}\fontshape{#5}%
  \selectfont}%
\fi\endgroup%
{\renewcommand{\dashlinestretch}{30}
\begin{picture}(7262,3677)(0,-10)
\put(931,481){\circle*{46}}
\put(1831,931){\circle*{46}}
\put(31,31){\circle*{46}}
\put(7231,3631){\circle*{46}}
\put(6331,3181){\circle*{46}}
\put(5431,2731){\circle*{46}}
\put(2731,1381){\circle*{46}}
\put(4531,2281){\circle*{46}}
\drawline(31,31)(7231,3631)
\dashline{60.000}(7231,3631)(4081,481)(931,481)
\dashline{60.000}(6331,3181)(3181,31)(31,31)
\dashline{60.000}(5431,2731)(4081,1381)(2731,1381)
\dashline{60.000}(4531,2281)(3181,931)(1831,931)
\end{picture}
}
\end{center}
\begin{caption}{An example of a non-crossing $4$-path $(L_a)$ in case $\tfrac sn = \frac 12 \in \Q/\Z$ and $t = 4$. For each $a$, the path $L_a$ first takes $n_a^{w_0} - s_a^{w_0}$ horizontal steps and then $s_a^{w_0}$ vertical steps. Note that paths with the same invariant do not intersect.
}
\end{caption}
\label{figure3}
\end{figure}

\section{A dual formula} 
The argument for Theorem~\ref{nognietzosimpel} extends to the case where $h \leq t$. This computation more complicated, because the permutation $w \in \iS_t$ that contributes to Equation~\eqref{pathsumnonint} is no longer unique and the signs $\eps_{P_0 \cap M_w}$ in Equation~\eqref{pathsum} depend on the contributing permutations $w$ (these signs are independent of $w$ only in case $h \geq t$). We don't reproduce the computation here, because there is a more elegant approach using the duality of Zelevinsky. 

The Zelevinsky dual of a Speh representation with parameters $(h, t)$ is a Speh representation with the role of the parameters inversed, thus of type $(t, h)$. Furthermore, taking the Zelevinsky dual of the formula of Tadic yields a new character formula, now in terms of duals of standard representations. Of course, the Zelevinsky dual of a standard representation is not standard, rather it is an unramified twist of products in $\cR$ of one dimensional representations. Therefore, we compute first the compact trace on the one dimensional representations, then use van Dijk's theorem \cite[prop.~1.5]{kret1} to obtain formulas for products in $\cR$ of one dimensional representations, and finally use the dual of Tadic's formula to compute the compact traces on Speh representations with $h \leq t$ (opposite inequality to Theorem~\ref{nognietzosimpel}). We will then have computed the formula for all Speh representations. This approach seems longer but that is not true: The individual steps we take also appear in an equivalent form in our original computation. 

\subsection{The trivial representation} We compute the compact traces of spherical Hecke operators acting on the trivial representation of $G$. We recall some definitions on roots and convexes from \cite[\S 1]{MR1369904} and \cite[Chap. 1]{waldlabessetwisted}.  

Let $P$ be a standard parabolic subgroup of $G$. Let $A_P$ be the center of $P$. We write $\eps_P = (-1)^{\dim(A_P/A_G)}$. We define $\ia_P := X_*(A_P) \otimes \R$. If $P \subset P'$ then we have $A_{P'} \subset A_P$ and thus an induced map $\ia_{P'} \to \ia_P$. We write $T = A_{P_0}$. We define $\ia_P^{P'}$ to be the quotient of $\ia_P$ by $\ia_{P'}$. We write $\ia_0 = \ia_{P_0}$ and $\ia_0^G = \ia_{P_0}^G$. 

We write $\Delta$ for the set of simple roots of $T$ occurring in the Lie algebra of $N_0$. For each root $\alpha$ in $\Delta$ we have a coroot $\alpha^\vee$ in $\ia_0^G$. We write $\Delta_P \subset \Delta$ for the subset of $\alpha \in \Delta$ acting non-trivially on $A_P$. For $\alpha \in \Delta_P \subset \Delta$ we send the coroot $\alpha^\vee \in \ia_0^G$ to the space $\ia_P^G$ via the canonical surjection $\ia_0^G \surjects \ia_P^G$. The set of these restricted coroots $\alpha^\vee|_{\ia_P^G}$ with $\alpha$ ranging over $\Delta_P$ form a basis of the vector space $\ia_P^G$. By definition the set of fundamental weights $\{ \varpi_\alpha^G \in \ia_P^{G*}\,\,|\,\, \alpha \in \Delta_P\}$ is the basis of $\ia_P^{G*} = \Hom(\ia_P^G , \R)$ dual to the basis $\{\alpha^\vee_{\ia_P^G}\}$ of coroots. Recall that we have the \emph{acute} and \emph{obtuse} Weyl chambers of $G$. The \emph{acute chamber} $\ia_P^{G+}$ is the set of $x \in \ia_P^G$ such that $ \langle \alpha, x \rangle > 0$ for all roots $\alpha \in \Delta_P$. The \emph{obtuse chamber} ${}^+ \ia_P^G$ is the set of $x \in \ia_P^G$ such that we have the inequality $\langle \varpi_\alpha^G, x \rangle > 0$ for all fundamental weights $\varpi_\alpha^G$, associated to $\alpha \in \Delta_P$. We need another chamber, defined by ${}^{\leq} \ia_P^G = \lbr x \in \ia_P^G \,\, | \,\, \forall \alpha \in \Delta_P \, \langle \varpi_\alpha^G, x \rangle \leq 0 \rbr$. We call this chamber the \emph{closed opposite obtuse Weyl chamber}. Let $\tauc$ be the characteristic function on $\ia_P$ of this chamber. Let $H_M \colon M \to \ia_P$ be the \emph{Harish-Chandra} mapping, normalized such that $|\chi(m)|_p = q^{-\langle \chi, H_M(m) \rangle}$ for all rational characters $\chi$ of $M$. We define the function $\xi_c^G$ on $M_0 = T$ to be the composition ${}^\leq \widehat \tau^G_{P_0}\circ (\ia_{P_0} \surjects \ia_{P_0}^G) \circ H_{M_0}$. 

If $f \in \cH_0(G)$ is a function whose Satake transform is the function $h \in A$, then we often abuse notation, and write $\xi_c^G h$ for the Satake transform of the function $\xi_c^G f^{(P_0)}$, and similarly for the functions $\chi_N f$ and $\widehat \chi_N f$ if $f \in \cH_0(M)$. 
 
The following Proposition and proof are valid for any split reductive group $G$ over a non-Archimedean local field. 

\begin{proposition}\label{cttrivial}
 Let $f$ be a function in the Hecke algebra $\cH_0(G)$. The compact trace $\Tr(\chi_c^G f, \one_G)$ is equal to $\Tr\lhk \xi_c^G f^{(P_0)}, \one_T(\delta_{P_0}^{-1/2})\rhk$. 
\end{proposition}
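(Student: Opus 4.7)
The plan is to apply van Dijk's formula for compact traces \cite[Prop.~1.5]{kret1} and then collapse the resulting signed sum over standard parabolics by a combinatorial identity for chambers.

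First I would invoke van Dijk's identity, which expresses $\Tr(\chi_c^G f, \pi)$ as an alternating sum over standard parabolics $P = MN$ of traces of suitably truncated constant terms $f^{(P)}$ against the normalized Jacquet modules $r_P^G\pi = \pi_N(\delta_P^{-1/2})$. For $\pi = \one_G$ the unnormalized Jacquet module $(\one_G)_N$ is simply $\one_M$, so the normalized Jacquet module reduces to the one-dimensional character $\one_M(\delta_P^{-1/2})$ of $M$. The left hand side thus takes the form
\[
\Tr(\chi_c^G f, \one_G) \;=\; \sum_{P = MN \supseteq P_0} \eps_P \, \Tr\bigl(\psi_P \cdot f^{(P)}, \; \one_M(\delta_P^{-1/2})\bigr),
\]
where $\psi_P$ is a chamber truncation on $M$ prescribed by van Dijk's formula.

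Next I would push each summand all the way down to $T$ by taking further constant terms along the minimal parabolic $P_0 \cap M$ of $M$, using transitivity $(f^{(P)})^{(P_0 \cap M)} = f^{(P_0)}$. On the character side, the restriction of $\one_M(\delta_P^{-1/2})$ to $T$ together with the Jacobian $\delta_{P_0 \cap M}^{1/2}$ coming from this second descent assembles to $\one_T(\delta_{P_0}^{-1/2})$, using the standard relation $\delta_{P_0} = \delta_P \cdot \delta_{P_0 \cap M}$ on $T$. After transporting the truncations $\psi_P$ and $\chi_c^M \chi_M^G$ to $\ia_0^G$ via the Harish-Chandra map, one arrives at
\[
\Tr(\chi_c^G f, \one_G) \;=\; \Tr\Bigl(\Bigl(\sum_{P \supseteq P_0} \eps_P \, \phi_P\Bigr) f^{(P_0)}, \; \one_T(\delta_{P_0}^{-1/2})\Bigr),
\]
with $\phi_P$ a characteristic function on $\ia_0^G$ that encodes the combination of $\psi_P$ with the descent $M \to T$.

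The final step, which is the main obstacle of the proof, is the combinatorial identity $\sum_{P \supseteq P_0} \eps_P \, \phi_P = \xi_c^G$ on $\ia_0^G$. This is dual to the identity invoked in the Steinberg computation \cite[Prop.~1.13]{kret1}, where the analogous signed sum of obtuse chamber truncations collapsed to $\widehat \chi_{N_0}$; here the signed sum must telescope instead into the characteristic function of the \emph{closed opposite} obtuse Weyl chamber. This is an instance of Langlands' combinatorial lemma for cones in $\ia_0^G$, in the form recorded in \cite[\S 1]{MR1369904} and \cite[Chap.~1]{waldlabessetwisted}, and can be checked by the standard inclusion-exclusion on the faces of the acute/obtuse chamber decomposition. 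Once this identity is granted, the formulas match up term by term and the proposition follows.
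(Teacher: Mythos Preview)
Your plan is the paper's plan: start from Clozel's decomposition of the compact trace as an alternating sum over standard parabolics with the obtuse truncation $\widehat\chi_N$, push everything down to $T$, and then collapse the signed sum of cone functions by inclusion--exclusion to the closed opposite obtuse chamber. So the architecture is right.

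Two places where you are imprecise and should tighten up. First, the formula you invoke is not \cite[Prop.~1.5]{kret1} (that one is the van Dijk identity for \emph{induced} representations, producing the truncation $\chi_c^M\chi_M^G$, which is why that symbol crept into your step~2 where it has no business). What is actually used here is the Clozel expansion
\[
\Tr(\chi_c^G f,\one_G)\;=\;\sum_{P=MN}\eps_P\,\Tr\bigl(\widehat\chi_N f^{(P)},\ \one_M(\delta_P^{-1/2})\bigr),
\]
so your $\psi_P$ is specifically $\widehat\chi_N = \widehat\tau_P^G\circ H_M$. Second, the descent from $M$ to $T$ is not just bookkeeping with constant terms: the truncation $\widehat\chi_N$ is defined via $H_M$, and one has to argue that on $T$ it coincides with $\widehat\tau_P^G\circ H_T$. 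The paper does this by noting $H_M(t)=\tfrac{1}{\#W_M}\sum_{w\in W_M} wH_T(t)$ and that each fundamental weight $\varpi_\alpha^G$ with $\alpha\in\Delta_P$ is $W_M$-invariant, so the averaging does not change the sign of $\langle\varpi_\alpha^G,\,\cdot\,\rangle$. Once $\phi_P=\widehat\tau_P^G\circ H_T$ is established, the inclusion--exclusion $\sum_P \eps_P\,\widehat\tau_P^G={}^{\leq}\widehat\tau_{P_0}^G$ is exactly the identity you quote, and the argument concludes as you indicate.
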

\begin{proof} 
For comfort we prove the proposition under the additional assumption that $G$ is its own derived group. 
We have 
$$
\Tr(\chi_c^{G(\qp)} f, \one) = \sum_{P = MN} \eps_P \Tr(\widehat \chi_N f^{(P)}, \one(\delta_P^{-1/2})). 
$$
Recall that we have the notation $\varphi_{M, \rho} \in \widehat M$ for the Hecke matrix of a representation $\rho$ of $M$. The Hecke matrix $\varphi_{M, \delta_P^{-1/2}}$ is conjugate in $\widehat M$ to the Hecke matrix $\varphi_{T, \delta_P^{-1/2}\delta_{P_0 \cap M}^{-1/2}} = \delta_{T, \delta_{P_0}^{-1/2}}\in \widehat T \subset \widehat M$.
Recall that the Satake transform is defined by the composition of the morphism $f \mapsto f^{(P_0)}$ with the obvious isomorphism $\cH_0(T) \cong \C[X_*(T)]$ (the Satake transformation for $T$). Therefore 
$$
\Tr(\widehat \chi_N f^{(P)}, \one(\delta_P^{-1/2})) = \cS(\widehat \chi_N f^{(P_0)})(\varphi_{T, \delta_{P_0}^{-1/2}}).
$$
Using linearity of the Satake transform we obtain 
$$
\Tr(\chi_c^{G(\qp)} f, \one) = \cS\lhk\sum_{P = MN} \eps_P \widehat \chi_N f^{(P_0)} \rhk(\varphi_{T, \delta_{P_0}^{-1/2}}). 
$$
Thus we have to compute the function $\sum_{P = MN} \eps_P \widehat \chi_N$ on the group $T$. By definition we have 
$$
\widehat \chi_N = \widehat \tau_P^G \circ H_M.  
$$
Let $W_M$ be the rational Weyl group of $T$ in $M$. Let $t \in T$. Then 
$$
H_M(t) = \frac 1 {\# W_M} \sum_{w \in W_M} w H_T(t). 
$$
Thus $\widehat \chi_N(t) = 1$ if and only if
$$
\forall \alpha \in \Delta_P: \quad \sum_{w \in W_M} \langle \varpi_\alpha^G, wH_T(t) \rangle > 0.
$$
We have for all $\alpha \in \Delta_P$ the inequality $\langle \varpi_\alpha^G, H_T(t) \rangle > 0$ 
if and only if we have $\langle \varpi_\alpha^G, wH_T(t) \rangle > 0$ for all $w \in W_M$. Therefore, we have on the group $T$ 
$$
\widehat \chi_N = \widehat \tau_P^G \circ H_T. 
$$
Thus
$$
\sum_{P = MN} \eps_P \widehat \chi_N = \lhk \sum_{P = MN} \eps_P \widehat \tau_P^G \rhk \circ H_T.
$$
By inclusion-exclusion we have 
$$
\sum_{P = MN} \eps_P \widehat \tau_P^G = {}^{\leq} \widehat \tau_P^G.  
$$
This proves the proposition in case $G = G_{\textup{der}}$. It is easy to deduce the statement from the case $G = G_{\textup{der}}$.
\end{proof}

\begin{remark}
Consider the space $I$ of locally constant functions from $G/P_0$ to $\C$, and equip $I$ with the $G$-action through right translations. Then, with an argument similar to the one above, one may compute the compact traces on the irreducible subquotients $V$ of $C$. Recall from Borel and Walach \cite{MR1721403} that these representations are all mutually non-isomorphic and occur with multiplicity one in $I$. Borel and Walach describe the representations $V$ precisely; they are indexed by the standard parabolic subgroups of $G$.
\end{remark}

\subsection{The dual formula} In this subsection we prove the dual version of Theorem~\ref{nognietzosimpel}.

\begin{lemma}\label{traceonstandarddual}
Let $T_1 = \langle u_1, v_1\rangle $, $T_2 = \langle u_2, v_2\rangle $, \ldots, $T_h = \langle u_h, v_h\rangle $ be a list of segments and consider the representation $J := (\Delta T_1)^\iota \times (\Delta T_2)^\iota \times \cdots \times (\Delta T_h)^\iota$. Then $\Tr(\chi_c^G f_{n\alpha s}, \pi)$ is equal to $q^{s(n-s)/2} \Dyck(\uu, \uv)$, where $\uu_a = \ell(u_a)$ and $\uv_a = \ell(v_a + 1)$ for $a = 1,2,\ldots, t$. 
\end{lemma}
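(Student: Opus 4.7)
My approach is to adapt the proof of Lemma~\ref{traceonstandard} to the Zelevinsky-dual setting: replace the Steinberg factors $\Delta T_a$ by their duals $(\Delta T_a)^\iota$, and replace the role played by Lemma~\ref{steintrace} with that of Proposition~\ref{cttrivial}.

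First I would treat a single factor. For a segment $T=\langle u,v\rangle$ of length $n_T$, the identity $(\Delta T)\otimes\nu^{-u+(1-n_T)/2}=\St_{\Gl_{n_T}}$ from Lemma~\ref{tracesteindyck}, together with the fact that the Aubert--Zelevinsky involution $\iota$ commutes with unramified twists (both parabolic induction and Jacquet functors do), yields $(\Delta T)^\iota=\one_{\Gl_{n_T}}\otimes\nu^{u+(n_T-1)/2}$. By Lemma~\ref{twists} it then suffices to compute $\Tr(\chi_c^{\Gl_{n_T}} f_{n_T\alpha s_T},\one_{\Gl_{n_T}})$, which by Proposition~\ref{cttrivial} equals $\Tr(\xi_c^G f_{n_T\alpha s_T}^{(P_0)},\one_T(\delta_{P_0}^{-1/2}))$. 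I would then translate this graphically in the spirit of the proof of Lemma~\ref{steintrace}: each multiplicity-free monomial $X$ of $\cS(f_{n_T\alpha s_T})$ corresponds to a lattice path in $\Q^2$, and the evaluation at the Satake parameter of $\one_T(\delta_{P_0}^{-1/2})$ recovers the weight of the path, up to the overall normalising factor $q^{s_T(n_T-s_T)/2}$. The essential point is that the truncation $\xi_c^G$, being the characteristic function of the closed opposite obtuse Weyl chamber, selects precisely the paths that lie weakly below the line $\ell$: these are by definition the non-strict Dyck paths. Twisting back by $\nu^{u+(n_T-1)/2}$ via Lemma~\ref{twists} shifts the endpoints of the paths to $\ell(u)$ and $\ell(v+1)$ and yields
\[
\Tr(\chi_c^{\Gl_{n_T}} f_{n_T\alpha s_T}, (\Delta T)^\iota)=q^{s_T(n_T-s_T)/2}\,\Dyck(\ell(u),\ell(v+1)).
\]

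To pass to the product representation $J$ I would argue exactly as in the proof of Lemma~\ref{traceonstandard}: apply van Dijk's formula for compact traces together with the explicit formula of Equation~\eqref{constantterm} for the constant term $\chi_c^G f_{n\alpha s}^{(P)}$, where $P$ is the standard parabolic of $G$ corresponding to the composition $(\ell(T_a))_a$. This factors $\Tr(\chi_c^G f_{n\alpha s},J)$ as $q^{\alpha C(n_a,s_a)}$ times the product over $a$ of the individual traces obtained in the previous paragraph. The accumulated power of $q$ telescopes into $q^{s(n-s)/2}$, and the product of the individual non-strict Dyck polynomials is the multi-path polynomial $\Dyck(\uu,\uv)$ (the non-strict analog of Equation~\eqref{dijckpolmayintersect}, in which no non-crossing condition is imposed). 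In contrast with Lemma~\ref{traceonstandard} no overall sign appears: each factor there contributed a $(-1)^{n_a-1}$ through Equation~\eqref{traceonsteinberg}, but Proposition~\ref{cttrivial} introduces no such sign.

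The delicate step is the graphical translation of $\xi_c^G$. Concretely, I would need to verify that for a monomial $X=\prod X_i^{e_i}$ with $\sum e_i=s$ the identity $\xi_c^G X=X$ is equivalent to the associated graph, drawn in the natural left-to-right direction as indicated in the remark following Equation~\eqref{thegraph}, having every interior vertex weakly below the line $\ell$. This follows by combining the fundamental-weight computation used after Equation~\eqref{thecondition} with the inclusion--exclusion identity $\sum_{P=MN}\eps_P\widehat\tau_P^G={}^\leq\widehat\tau^G_{P_0}$ established inside the proof of Proposition~\ref{cttrivial}. Once this dictionary is in place, the remaining steps are routine and parallel to the proof of Lemma~\ref{traceonstandard}.
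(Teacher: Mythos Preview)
Your proposal is correct and follows essentially the same route as the paper: reduce to the trivial representation via the twist $(\Delta T)^\iota\cong \one\otimes\nu^{u+(n_T-1)/2}$, invoke Proposition~\ref{cttrivial} to replace the $\widehat\chi_{N_0}$-truncation by $\xi_c^G$, translate this graphically as the weak (non-strict) Dyck condition with the path now drawn in the natural left-to-right direction and evaluated at the inverted Satake point $(q^{(n-1)/2},\ldots,q^{(1-n)/2})$, and then pass to products exactly as in Lemma~\ref{traceonstandard}. The paper carries out these steps in the same order and with the same ingredients, including the observation that no sign $(-1)^{n-t}$ appears here.
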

\begin{remark}
 Recall that for the compact trace on the Steinberg representation, $\Tr(\chi_c^G f_{n\alpha s}, \St_G)$ we had the sign $\eps_{P_0}$ multiplied with a \emph{strict} Dyck polynomial. In case $n$ and $s$ are coprime, then any Dyck polynomial from the point $\ell(\tfrac{1-n}2)$ to the point $\ell(\tfrac{n-1}2 + 1)$ is strict; consequently the trace on Steinberg and trivial representation differ only by the sign $\eps_{P_0}$.
\end{remark}
\begin{proof}
The proof is the same as the proof for Lemma~\ref{steintrace}, replacing the result in Equation~\eqref{traceonsteinberg} with the result from Proposition~\ref{cttrivial}. However, we repeat the argument for verification purposes (one has to be careful with the signs). 

Assume first that $h = 1$ and that $\pi$ is the trivial representation of $G$. In the previous subsection we proved that 
$$
\Tr\lhk\chi_c^G f_{n\alpha s}, \pi\rhk = \Tr \lhk \xi_c^G f_{n\alpha s}^{(P_0)}, \one_T(\delta_{P_0}^{-1/2}) \rhk.
$$
To a monomial $X = X_1^{e_1} X_2^{e_2} \cdots X_n^{e_n} \in \C[X_1^{\pm 1}, X_2^{\pm 1}, \ldots X_n^{\pm 1}]$ with $e_i \in \Z$ and $\sum_{i=1}^n e_i = s$ we associate the graph $\cG_X$ with points
\begin{equation}\label{newpath}
\uv_0 := \ell(\tfrac {1-n}2), \quad \uv_i := \uv_0 + (i, e_1 + e_2 + \ldots + e_i) \in \Q^2,
\end{equation}
for $i = 1, 2, \ldots, n$. We have $\xi_c^G X = X$ if and only if 
\begin{equation}\label{andereconditie}
e_1 + e_2 + \cdots + e_i \leq \tfrac sn i,
\end{equation}
for all indices $i < n$, and $\xi_c^G X = 0$ otherwise. The evaluation of $X$ at the point 
\begin{equation}\label{thewweight2}
\lhk q^{\tfrac {n-1}2}, q^{\tfrac {n-3}2}, \ldots, q^{\tfrac {1-n}2} \rhk
\end{equation}
equals the weight\footnote{Equation~\eqref{thewweight2} differs from Equation~\eqref{thewweight} by a sign in the exponents. However, observe also that the graph in Equation~\eqref{newpath} is traced in the direction opposite to the graph in Equation~\eqref{thegraph}.} of the graph $\cG_X$. 

The trace of $f_{n\alpha s}$ against the representation $\one_T(\delta_{P_0}^{-1/2})$ is equal to the evaluation of $f_{n\alpha s}$ at the point in Equation~\eqref{thewweight2} (use Lemma~\ref{heckeevaluation} but notice that the signs are different). The monomials $X$ occurring $\cS(f_{n\alpha s})$ yield paths from the point $\ell(\tfrac {1-n}2) \in \Q^2$ to the point $\ell(\tfrac {n-1}2 + 1)$. The condition in Equation~\eqref{andereconditie} is true if and only if the graph $\cG_X$ lies (non-strictly) below the line $\ell$. Therefore we have
$$
\Tr(\chi_c^G f_{n\alpha s}, \one_G) = q^{\tfrac {s(n-s)}2} \Dyck(\ell(\tfrac {1-n}2), \ell(\tfrac {n-1}2 + 1)).
$$
By twisting with the character $\nu^{-x + \tfrac {1-n}2}$ as we did in Lemma~\ref{tracesteindyck} we find
$$
\Tr \lhk \chi_c^G f, (\Delta\langle u, v\rangle)^\iota \rhk = q^{\tfrac {s(n-s)}2} \Dyck(\ell(x), \ell(y + 1)),
$$
for all segments $\langle u, v \rangle$. Finally the argument in Lemma~\ref{traceonstandard} may be repeated to find the compact traces on duals of standard representations as stated in the Lemma. 
\end{proof}

\begin{theorem}\label{nognietzosimpeldual} Let $\pi$ be a Speh representation with parameters $h, t$ with $h \leq t$. Let $d$ be the greatest common divisor of $n$ and $s$ and write $m$ for the quotient $\tfrac nd$. Let $T_a = \langle u_a, v_a \rangle$ be the segments of $\pi^\iota$. Define the points $\uu_a := \ell(u_a)$ and $\uv_a := \ell(v_a + 1)$. The compact trace $\Tr(\chi_c^G f_{n\alpha s}, \pi)$ is non-zero if and only if $m$ divides $h$ or $m$ divides $t$. Assume that the compact trace is non-zero, then it is equal to $\sign(w_0) q^{\tfrac {s(n-s)}2 \alpha} \Dyck^+(\uu^{w_0}, \uv)$, where the permutation $w_0 \in \iS_h$ is defined in Definition~\ref{wnuldef}. 
\end{theorem}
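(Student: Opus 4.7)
The plan is to dualise the argument of Theorem~\ref{nognietzosimpel}. Since the Zelevinsky involution $\iota$ is an algebra automorphism of $\cR$ of order two and sends $\Speh(h,t)$ to $\Speh(t,h)$, the hypothesis $h \leq t$ means that $\pi^\iota$ is a Speh representation with $h$ segments each of length $t$, so Tadic's determinantal formula (Theorem~\ref{LMthm}) applied to $\pi^\iota$ is a sum over $\iS_h$ with $\iS_h' = \iS_h$. Applying $\iota$ to both sides of Tadic's formula for $\pi^\iota$ yields, using that $\iota$ is an algebra automorphism,
$$
\pi = (\pi^\iota)^\iota = \sum_{w \in \iS_h} \sign(w)\, (I^{\iota,w})^\iota,
$$
where each $(I^{\iota,w})^\iota$ is a product in $\cR$ of the form $(\Delta T_1^{w})^\iota \times \cdots \times (\Delta T_h^{w})^\iota$ with $T_a^{w}$ the appropriately permuted segments of $\pi^\iota$.

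Next I would compute the compact trace term by term via Lemma~\ref{traceonstandarddual}. Because that Lemma carries no sign in front of $\Dyck(\uu,\uv)$ (in contrast to Lemma~\ref{traceonstandard}, whose $(-1)^{n-t}$ factor is absorbed by the $\eps_P$'s hidden in the definition of $\iota$), the resulting identity reads
$$
\Tr(\chi_c^G f_{n\alpha s}, \pi) = q^{s(n-s)\alpha/2} \sum_{w \in \iS_h} \sign(w)\, \Dyck(\uu^w, \uv).
$$
I would then apply the Lindstr\"om--Gessel--Viennot cancellation verbatim from the proof of Theorem~\ref{nognietzosimpel}: at the lexicographically minimal crossing point shared by two paths $L_a, L_b$ with $(a,b)$ lexicographically minimal, swap the tails to produce an $h$-path for the permutation $(ab)w$ of the same weight and opposite sign. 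Only the non-crossing (not necessarily strict) Dyck paths survive, yielding
$$
\Tr(\chi_c^G f_{n\alpha s}, \pi) = q^{s(n-s)\alpha/2} \sum_{w \in \iS_h} \sign(w)\, \Dyck^+(\uu^w, \uv).
$$

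Finally, the invariant argument (observation (Obs) and the picture of Figure~\ref{figure2} in the proof of Theorem~\ref{nognietzosimpel}) shows that at most one permutation $w$ makes $\Dyck^+(\uu^w, \uv)$ non-zero, and the inductive characterisation of that permutation coincides with the $w_0$ of Definition~\ref{wnuldef} applied to the point lists $\uu, \uv$. Lemma~\ref{combinatoriallemma} applied to $\pi^\iota$ (whose parameters are $(t,h)$ with $t \geq h$) gives exactly the symmetric condition ``$m \mid h$ or $m \mid t$'' for the existence of $w_0$; and when $w_0$ exists, an explicit non-crossing $h$-path (cf.\ Figure~\ref{figure3}) is built in each strip by first taking the necessary eastward steps and then the diagonal steps, proving non-vanishing.

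The main obstacle is the careful bookkeeping of signs together with the topological versus combinatorial notion of crossing: one must verify that the absence of $\eps_{P_0 \cap M_w}$ in Lemma~\ref{traceonstandarddual} really reflects the full Zelevinsky-duality contribution, so that no stray global sign enters the final formula, and that the argument forcing a topological intersection to be an honest crossing point (used on two endpoints sharing the same invariant $\rho$) still goes through for non-strict Dyck paths, which are now allowed to touch the diagonal line $\ell$ in the interior.
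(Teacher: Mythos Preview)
Your proposal is correct and follows essentially the same approach as the paper's own proof: dualise Tadic's formula for $\pi^\iota$, apply Lemma~\ref{traceonstandarddual} term by term to obtain a signed sum of (non-strict) Dyck polynomials over $\iS_h$, then rerun the Lindstr\"om--Gessel--Viennot cancellation and the invariant/uniqueness argument from Theorem~\ref{nognietzosimpel} with $t$ replaced by $h$ and strict Dyck $t$-paths replaced by Dyck $h$-paths. The two concerns you flag are exactly the points the paper addresses implicitly: the absence of the $\eps_{P_0\cap M_w}$ sign is already built into Lemma~\ref{traceonstandarddual} (via Proposition~\ref{cttrivial}), and the crossing argument goes through unchanged because the crucial step---that a topological intersection of two paths whose starting points share the same invariant $\rho$ is an honest lattice crossing---does not use strictness, while the hypothesis $h\leq t$ guarantees all the $\uu_a,\uv_b$ are distinct so no degenerate coincidences on $\ell$ occur.
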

\begin{proof}
Let $\pi^\iota$ be the representation dual to the representation $\pi$. After dualizing the formula of Tadic for $\pi^\iota$ we obtain an expression of the form 
\begin{equation}\label{dualtadic}
\pi = \sum_{w \in \iS_h} \sign(w) I_w^\iota. 
\end{equation}
The involution $\iota$ on $\cR$ commutes with products. Therefore, if $T_1, \ldots, T_h$ are the Zelevinsky segments of the dual representation $\pi^\iota$, then $I_w^\iota$ is equal to $(\Delta T_1)^\iota \times (\Delta T_2)^\iota \times \cdots (\Delta T_k)^\iota$. By Lemma~\ref{traceonstandarddual} we obtain
$$
\Tr(\chi_c^G f_{n\alpha s}, I_w^\iota) = q^{s(n-s)/2} \Dyck(\uu^w, \uy).
$$
A crucial remark is that the points $\uu$ and $\uv$ are all different because we assume that $h \leq t$. Therefore one may repeat the argument in the proof of Theorem~\ref{nognietzosimpel} using the dual formula in Equation~\eqref{dualtadic}; one only has to interchange $t$ with $h$ and every occurrence of the word ``strict Dyck $t$-path'' with ``Dyck $h$-path'', as the paths that describe the compact traces on (products in $\cR$ of) trivial representations are not necessary strict. 
\end{proof}

\section{Return to Shimura varieties}\label{establishmainformula}\label{section.mainproof}

In our article \cite{kret1} we proved a formula for the basic stratum of certain Shimura varieties associated to unitary groups, subject to a technical condition on the Newton polygon of the basic stratum (that it has no non-trivial integral points). In the previous sections we have completely resolved the combinatorial issues that arise if you remove this condition in case $p$ is totally split in the center of the division algebra. We may now essentially repeat the argument from \cite{kret1} to obtain the description of the cohomology if there is no condition on the Newton polygon of the basic stratum. A large part of the argument remains the same, that part will only be sketched and we refer to \cite{kret1} for the details. 

\subsection{Notations and assumptions} Let $\Sh_K/\cO_E \otimes \Z_{(p)}$ be a Kottwitz variety \cite{MR1163241}. Here we have fixed the following long list of notations and assumptions:
\begin{enumerate}
\item Let $D$ be a division algebra over $\Q$;
\item $F$ is the center of $D$,  assume $F$ is a CM field of the form $F = \cK F^+ \subset \li \Q$, where $F^+$ is totally real, and $\cK/\Q$ is quadratic imaginary; 
\item $*$ is an anti-involution on $D$ inducing complex conjugation on $F$;
\item $n \in \Z_{\geq 0}$ is such that $\dim_F(D) = n^2$; 
\item $G$ is the $\Q$-group with $G(R) = \lbr x \in D_R^\times | g^* g \in R^\times\rbr$ for every commutative $\Q$-algebra $R$;
\item $h$ is an algebra morphism $h \colon \C \to D_\R$ such that $h(z)^* = h(\li z)$ for all $z \in \C$;
\item the involution $x \mapsto h(i)^{-1} x^* h(i)$ on $D_\R$ is positive; 
\item $X$ is the $G(\R)$ conjugacy class of the restriction of $h$ to $\C^\times \subset \C$;
\item $\mu \in X_*(G)$ is the restriction of $h\otimes \C \colon \C^\times \times \C^\times \to G(\C)$ to the factor $\C^\times$ of $\C^\times \times \C^\times$ indexed by the identity isomorphism $\C \isomto \C$;
\item $E \subset \li \Q$ is the reflex field of this Shimura datum $(G, X, h^{-1})$;
\item $\xi$ is an (any) irreducible algebraic representation over $\li \Q$ of $G_{\li \Q}$;
\item Let $f_\infty$ be a function at infinity having its stable orbital integrals prescribed by the identities of Kottwitz in \cite{MR1044820}; it can be taken to be (essentially) an 
Euler-poincar\'e function \cite[Lemma~3.2]{MR1163241} (cf. \cite{MR794744}). The function has the following property: Let $\pi_\infty$ be an $(\ig, K_\infty)$-module occurring as the component at infinity of an automorphic representation $\pi$ of $G$. Then the trace of $f_\infty$ against $\pi_\infty$ is equal to the Euler-Poincar\'e characteristic $\sum_{i=0}^\infty N_\infty (-1)^i \dim \uH^i(\ig, K_\infty; \pi_\infty \otimes \xi)$, where $N_\infty$ is a certain explicit constant (cf. \cite[p.~657, Lemma 3.2]{MR1163241}).
\item $p$ is a prime number where $\Sh_K$ has \emph{good reduction} \cite[\S~5]{MR1124982}, and we assume that $p$ is \emph{split} in $\cK/\Q$; 
\item $K \subset G(\Af)$ is a compact open subgroup, small enough that $\Sh_K / \cO_E\otimes \Zp$ is smooth and such that $K$ decomposes as $K^p K_p$ where $K^p$ is a compact open subgroup of $G(\Af^p)$ and $K_p$ is a hyperspecial compact open subgroup of $G(\Qp)$.\item $\nu_p \colon \li \Q \to \lqp$ is a fixed embedding, $\nu_\infty \colon \li \Q \to \C$ is another fixed embedding, the fields $F, F^+, E, \cK$ are all embedded into $\C$;
\item $\p$ is the $E$-prime induced by $\nu_p$;
\item $\fq$ is the residue field of $E$ at the prime $\p$ and $\lfq$ is the residue field of $\li \Q$ at $\nu_p$; for every positive integer $\alpha$, $E_{\p, \alpha} \subset \lqp$ is the unramified extension of $E_\p$ of degree $\alpha$; $\fqa$ is the residue field of $E_{\p, \alpha}$;
\item $\iota \colon B \hookrightarrow \Sh_{K,\fq}$ is the basic stratum \cite{MR2141705} (cf. \cite{MR2074714, MR1485921, MR1411570, MR1124982});
\item $\chi_c^G$ is the characteristic function on $G(\qp)$ of the subset of compact elements (cf. \cite{MR1068388});
\item $\ell$ is a prime number and $\lql$ an algebraic closure of $\Q_\ell$ together with an embedding $\li \Q \subset \lql$;
\item $\cL$ is the $\ell$-adic local system on $\Sh_K/\cO_E \otimes \Z_{(p)}$ associated to the representation $\xi \otimes \lql$ of $G_{\lql}$ \cite[p.~393]{MR1124982};
\item $U \subset G$ is the subgroup of elements with trivial factor of similitudes;
\item for each infinite $F^+$-place $v$, the number $s_v$ is the unique integer $0 \leq s_v \leq \tfrac 12 n$ such that $U(\R) \cong \prod_v U(s_v, n-s_v)$;
\item the embedding $\li \Q \subset \li \Q_p$ induces an action of the group $\Gal(\lqp/\qp)$ on the set of infinite $F^+$-places. For each $\Gal(\lqp/\qp)$-orbit $\wp$ we define the number 
$s_{\wp} := \sum_{v \in \wp} s_v$, and we write $\sigma_\wp$ for the partition $(s_v)_{v \in \wp}$ of the number $s_\wp$;
\item the function $f_\alpha$ is the function of Kottwitz \cite{MR1044820} associated to $\mu$ (cf. \cite[Prop.~3.3]{kret1}).
\end{enumerate}

\begin{remark}
The second condition (2) is particular for our arguments, and does not occur in \cite{MR1163241}. 
\end{remark}

\subsection{The main argument} 
In this article we compute the factors $\Tr(\chi_c^{G(\qp)} f_\alpha, \pi_p)$ occurring in Theorem~\ref{maintheorem} below. We need to introduce two classes of representations:

\begin{definition}
Consider the general linear group $G_n$ over a non-Archimedean local field. Then a representation $\pi$ of $G_n$ is called a (semistable) \emph{rigid representation} if it is equal to a product of the form 
$$
\prod_{a = 1}^k \Speh(x_a, y)(\eps_a) \in \cR,
$$
where $y$ is a divisor of $n$ and $(x_a)$ is a composition of $\tfrac ny$, and $\eps_a$ are unramified unitary characters. 
\end{definition}

\begin{definition}
A representation $\pi$ of the group $G(\qp) = \qp^\times \times \prod_{\wp | p} \Gl_n(F^+_\wp)$ is called a \emph{rigid} representation if for each $F^+$-place $\wp$ above $p$ the component $\pi_\wp$ is a (semistable) rigid representation of $\Gl_n(F^+_\wp)$ in the previous sense:
$$
\pi_\wp = \prod_{a=1}^k \Speh(x_{\wp, a}, y_\wp)(\eps_{\wp, a}) \in \cR,
$$ 
where two additional conditions hold: (1) $y_{\wp} = y_{\wp'}$ for all $\wp, \wp'|p$, and (2) the factor of similitudes $\Qp^\times$ of $G(\qp)$ acts through an unramified character on the space of $\pi$. We write $y := y_\wp$ and call the set of data $(x_{\wp, a}, \eps_{\wp, a}, y)$ the \emph{parameters} of $\pi$. 
\end{definition}

\begin{remark}
Recall that we work in the semistable setting, both notions of rigid representations that we introduced above in the semistable setting also have a natural variant in the non-semistable case.
\end{remark}

\begin{theorem}\label{maintheorem}
Let $\alpha$ be a positive integer. Assume the conditions (1)-(25) from \S 5.1. Then 
\begin{equation}\label{finalformula}
\sum_{i=0}^\infty (-1)^i \Tr(f^{\infty p} \times \Phi^\alpha_\p, \uH_{\et}^i(B_{\lfq}, \iota^*\cL)) = \sum_{\bo{\ \pi \subset \cA(G)} {\pi_p \textup{ rigid}}} 
\Tr(\chi_c^G f_\alpha, \pi_p) \cdot \Tr(f^p, \pi^p).
\end{equation}
\end{theorem}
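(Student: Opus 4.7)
The plan is to import wholesale the strategy of \cite{kret1}, replacing only the combinatorial compact-trace computations at the prime $p$ by the new Theorems \ref{nognietzosimpel} and \ref{nognietzosimpeldual} proved above. In \cite{kret1} the alternating trace of $f^{\infty p} \times \Phi^\alpha_\p$ on the cohomology $\uH^*_{\et}(B_{\lfq}, \iota^*\cL)$ of the basic stratum is unfolded via Kottwitz's point-counting formula together with the Langlands--Kottwitz stabilization. After invoking the pseudo-Euler--Poincar\'e function $f_\infty$ (whose trace against $\pi_\infty$ computes the cohomological contribution at infinity, cf.\ \cite[p.~657, Lem.~3.2]{MR1163241}), one obtains a sum over elliptic $(\gamma_0; \gamma, \delta)$-triples that stabilizes to a spectral expression. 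The contribution of the basic stratum is then isolated via the characteristic function $\chi_c^G$ on compact elements, using the van Dijk-type formula \cite[Prop.~1.5]{kret1}; in the process, the test function at $p$ gets identified with the Kottwitz function $f_\alpha$.

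Since $p$ is split in $\cK$ and totally split in the centre of $D$, the group $G(\qp)$ decomposes as $\qp^\times \times \prod_{\wp \mid p} \Gl_n(F^+_\wp)$, and $f_\alpha$ correspondingly factorises as a product of Kottwitz simple spherical functions $f_{n \alpha s_\wp}$ at the factors $\wp \mid p$. Hence the spectral side reduces to expressions of the form $\Tr(\chi_c^{G(\qp)} f_\alpha, \pi_p) \cdot \Tr(f^p, \pi^p) \cdot \Tr(f_\infty, \pi_\infty)$, summed over $\pi$ in the automorphic spectrum of $G$. The cohomological conditions at infinity (through the representation $\xi$) together with Moeglin--Waldspurger's description \cite{MR1026752} of the discrete spectrum of the general linear group, transferred back to $G$ through the base change of Clozel--Labesse exactly as in \cite{kret1}, cut the sum down to those $\pi$ for which $\pi_p$ is rigid: at each $\wp \mid p$ the component $\pi_\wp$ is a product of unramified twists of semistable Speh representations sharing a common height parameter.

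At this point the only remaining task is to evaluate the compact trace $\Tr(\chi_c^{G(\qp)} f_\alpha, \pi_p)$ for rigid $\pi_p$. By Lemma \ref{twists} the unramified twists can be absorbed into explicit scalars $\eps_{\wp, a}(\varpi_{F^+_\wp}^{\alpha s_\wp})$, and the trace factorises over the primes $\wp \mid p$ and over the Speh factors. For each such factor the newly established Theorems \ref{nognietzosimpel} and \ref{nognietzosimpeldual} give a closed formula, in terms of non-crossing strict Dyck $t$-path polynomials $\Dyck^+_{\strict}(\ux^{w_0}, \uy)$ when $h \geq t$, respectively non-crossing Dyck $h$-path polynomials $\Dyck^+(\uu^{w_0}, \uv)$ when $h \leq t$, multiplied by $\sign(w_0)$, the normalising power $q^{\alpha s(n-s)/2}$, and a global sign $(-1)^{n-t}$ in the first case. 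Collecting all pieces and substituting yields the right-hand side of \eqref{finalformula}.

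The main obstacle is a careful bookkeeping exercise: one must verify that the vanishing criterion of Theorems \ref{nognietzosimpel} and \ref{nognietzosimpeldual} (namely that $m = n/\gcd(n,s)$ divides $h$ or divides $t$) eliminates only genuine zero contributions and does not accidentally force additional cancellations on the automorphic side, and that the various signs --- coming from Kottwitz's stabilisation, from the normalisation of $f_\infty$, from the Tadic signs $\eps_{P_0 \cap M_w}$ that appear in decomposing the Speh components into standard representations, and from $\sign(w_0)$ in Definition \ref{wnuldef} --- reassemble correctly. All of this proceeds exactly as in \cite{kret1}; the only genuinely new ingredient is the combinatorial computation of Sections 3--4 above, which removes the previous restriction that the Newton polygon associated to the basic stratum had no non-trivial integer points.
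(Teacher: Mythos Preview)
Your proposal follows essentially the same route as the paper: invoke \cite[Prop.~3.4]{kret1} to rewrite the cohomological trace as $\Tr(\chi_c^G f_\infty f_\alpha f^p, \cA(G))$ (for $\alpha$ sufficiently large), then use base change and Jacquet--Langlands together with the Moeglin--Waldspurger classification and Ramanujan for the cuspidal building blocks to conclude that every contributing $\pi$ has rigid $\pi_p$.

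One remark: the statement of Theorem~\ref{maintheorem} leaves the factor $\Tr(\chi_c^G f_\alpha, \pi_p)$ unevaluated, so Theorems~\ref{nognietzosimpel} and~\ref{nognietzosimpeldual} are not actually needed in its proof. In the paper the explicit Dyck-path evaluation and the resulting notion of $B$-type representation are carried out \emph{after} the proof of Theorem~\ref{maintheorem}, as a separate step making the formula explicit. Your last two paragraphs therefore overshoot the target; the ``careful bookkeeping exercise'' you worry about is irrelevant to establishing \eqref{finalformula} itself.
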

\begin{remark}
Using recent results obtained with Lapid~\cite{LK} it is possible to extend the above theorem to the other Newton strata. However the result will be combinatorially complicated. We hope to include this result soon. 
\end{remark}
\begin{proof}[Proof of Theorem~\ref{maintheorem}]
Write $T(f^{p}, \alpha)$ for the left hand side of Equation~\eqref{finalformula}. By Proposition 3.4 of \cite{kret1} we have 
\begin{equation}\label{traceonautomforms}
T(f^p, \alpha) = \Tr(\chi_c^G f_\infty f_\alpha f^p, \cA(G)), 
\end{equation}
for all sufficiently large integers $\alpha$. To simplify notations, we write $f := f_\infty f_\alpha f^p$. 

Let $\pi \subset \cA(G)$ be an automorphic representation of $G$ contributing to the trace $\Tr(\chi_c^G f, \cA(G))$. In [\textit{loc. cit}, p. 20] we explained that $\pi$ may be base changed to an automorphic representation $BC(\pi)$ of the algebraic group $\cK^\times \times D^\times$, and that, in turn, $BC(\pi)$ may be send to an automorphic representation $\Pi := JL(BC(\pi))$ of the $\Q$-group $G^+ = \cK^\times \times \Gl_n(F)$. 

The representation $\Pi$ is a \emph{discrete} automorphic representation of the group $G^+(\A)$, and $\Pi$ is semistable at $p$. The classification of Moeglin-Waldspurger implies that $\pi_\wp$ is the irreducible quotient of the induced representation $$\Ind_{P(\A_F)}^{\Gl_n(\A_F)}\lhk \omega |\cdot |^{\tfrac {y-1}2}, \ldots, \omega |\cdot |^{\tfrac {1-y}2 } \rhk,$$ where $P \subset \Gl_n$ is the homogeneous standard parabolic subgroup having $y$ blocks, and each block is of size $n/y$; the inducing representation $\omega$ is a cuspidal automorphic representation of $\Gl_{n/y}(\A_F)$. 

The representation $\Pi$ comes from an automorphic representation of the group $G$ via Jacquet Langlands and base change. Therefore, $\Pi$ is cohomological and conjugate self dual. These properties descend, up to twist by a character, to the representation $\omega$. The Ramanujan conjecture is proved to be true for the representation $\omega$ by the articles \cite{caran, clozelpurity, MR2800722}. Thus the components $\omega_v$ of $\omega$ are \emph{tempered} representations. Note that, of course, the components $\Pi_v$ are \emph{not} tempered if $\Pi$ is not cuspidal. 

An easy computation shows that $\pi_\wp$ is a rigid representation for all $F^+$-places $\wp$ dividing $p$ \cite[Thm.~2.1]{kret1}. This means that there exists a positive divisor $y$ of $n$, a composition $\tfrac ny = \sum_{a = 1}^k x_a$, and unramified unitary characters $\eps_a$ such that
\begin{equation}\label{spehcomponent}
\pi_\wp \cong \Ind_{P(F_{\wp}^+)}^{\Gl_n(F_{\wp}^+)} \bigotimes_{a=1}^r \Speh(x_a, y)(\eps_a), 
\end{equation}
where $P \subset \Gl_n$ is the standard parabolic subgroup corresponding to the composition $(x_a y)$ of $n$, and the tensor product is along the blocks of the standard Levi factor $M$ of $P$. In Equation~\eqref{spehcomponent} the number $y$ is of \emph{global} nature and does not depend on $\wp$. The other data, $k$, $(x_a)$ and $\eps_a$ do depend on the place $\wp$. 
\end{proof}

We work under the condition that $p$ is split in the center $F$ of the algebra $D$. 
Because the prime $p$ is completely split in the extension $F/\Q$ we have by \cite[Prop.~3.3]{kret1} that $$
f_\alpha = \one_{q^{-\alpha}} \otimes \bigotimes_{v \in \Hom(F^+, \R)} f_{n\alpha s_v}^{\Gl_n(\qp)} \in \cH_0(G(\qp)),
$$
where the numbers $s_v$ are the signatures of the unitary group (cf. subsection 1). 
We compute 
\begin{align*}
\Tr(& \chi_c^{G(\qp)} f_\alpha, \pi_p) = \cr
& = \prod_{v \in \Hom(F^+, \R)} \Tr \lhk \chi_c^{\Gl_n(\qp)} f_{n\alpha s_v}, \Ind_{P(\qp)}^{\Gl_n(\qp)} \bigotimes_{a=1}^r \Speh(x_v, y)(\eps_{v, a}) \rhk \cr
& = \lhk \prod_{v \in \Hom(F^+, \R)} \prod_{a=1}^r \eps_{v,a}(q^{-s_v \tfrac {y \cdot x_a} n \alpha}) \rhk \cdot \cr
& \quad \quad \cdot \prod_{v \in \Hom(F^+, \R)} \Tr \lhk \chi_c^{\Gl_n(\qp)} f_{n\alpha s_v}, \Ind_{P(\qp)}^{\Gl_n(\qp)} \bigotimes_{a=1}^r \Speh(x_{v, a}, y) \rhk.
\end{align*}
Write $\zeta_\pi^\alpha \in \C$ for the product $\prod_v \prod_a \eps_a(q^{-s_v \tfrac {y \cdot x_a} n \alpha})$. The polynomial 
\begin{equation}\label{ZZ}
\Tr \lhk \chi_c^{\Gl_n(F^+_{\wp})} f_{n\alpha \sigma_\wp}, \Ind_{P(F^+_{\wp})}^{\Gl_n(F^+_{\wp})} \bigotimes_{a=1}^r \Speh(x_a, y) \rhk \in \C[q^\alpha], 
\end{equation}
is computed in Theorems \ref{nognietzosimpel} and \ref{nognietzosimpeldual} to be a polynomial defined by the weights of certain non-intersecting lattice paths. In particular the trace in Equation~\eqref{ZZ} vanishes unless the number 
\begin{equation}\label{ZZW}
m_{v,a} := \frac {y \cdot x_{\wp, a}}{ \gcd \lhk y\cdot x_{\wp, a}, \tfrac {y\cdot x_{\wp, a}} n s_\wp\rhk } = \frac {y \cdot x_{\wp, a}}{\gcd(n, s_\wp)} 
\end{equation}
is an integer, and divides either $x_{\wp, a}$ or $y$. We make the \emph{assumption} that the compact trace $\Tr(\chi_c^G f_\alpha, \pi_p)$ is non-zero and therefore these divisibility relations are satisfied.

The number $\zeta_\pi^\alpha \in \C$ is determined by the central character $\omega_\pi \colon Z(\A) \to \C^\times$ of the automorphic representation $\pi$ via the Equation:
\begin{equation}\label{thingsarecentral}
\omega_\pi(x)^{\alpha s/n} = \eps_s(q^\alpha) \cdot \prod_{\wp | p} \prod_{a=1}^r \eps_{\wp, a} \lhk q^{-s_\wp \tfrac {y \cdot x_a} n \alpha} \rhk = \zeta_\pi^\alpha,
\end{equation}
where $\eps_s$ is the contribution of the factor of similitudes, and $x \in Z(\A)$ is the following element of the center $Z$ of $G$:
$$
x := (1) \times \left[q, (q_{\wp})_{\wp}\right] \in Z(\A^p)\times \left[ \qp^\times \times F_{\qp}^{+\times} \right] = Z(\A).
$$
The divisibility relations in Equation~\eqref{ZZW} assure that taking the rational power $s/n$ of $\omega_\pi(x)$ on the left hand side makes sense.
 
\begin{remark}
The number $\zeta_\pi$ is a Weil-$q$-number of weight determined by the local system $\cL$, cf. \cite[Eq.~(3.10)]{kret1}. 
\end{remark}

\begin{definition}
We call a rigid representation $\pi_p$ of $G(\qp)$ of \emph{$B$-type} if for all $\wp$, $\gcd(n, s_\wp)$ divides the product $y \cdot x_{\wp, a}$. Furthermore, for each $F^+$-prime $\wp$ and each index $a$, the number $m_{\wp,a}$ divides either $y$ or $x_{\wp, a}$.
\end{definition}

We have proved that only the $B$-type representations contribute to the (alternating sum of the cohomology spaces) of $B$. Let $\pi_p$ be a $B$-type representation of $G(\qp)$. Then we write
$$
\Pol(\pi) \bydef \Tr \lhk \chi_c^{G(\qp)} f_\alpha, \pi_p \rhk \in \C[q^\alpha].
$$ 
We computed this polynomial in the first 4 sections of this article. Explicitely, it is the product over all $\wp$, over all indices $a$ of the polynomial
\begin{equation}\label{theorempol}
\eps \cdot q^{\tfrac {s_\wp (n-s_\wp)}2 \alpha} \cdot \Dyck^+(\ux^{w_0}, \uy),
\end{equation}
where the lists of points $\ux, \uy \in \Q^2$ are defined by:
\begin{enumerate}
\item If $x_a \leq y$, then $\ux$, $\uy$ are of length $x_a$, and for each $b$ we have 
$$
\ux_b := \ell \lhk \frac {x_a-y}2 \rhk, \quand \uy_b := \ell \lhk \frac {x_a + y}2 \rhk,
$$
\item if $x_a \geq y$, then $\ux$, $\uy$ are of length $y$, and for each $b$ we have
$$
\ux_b := \ell \lhk \frac {y-x_a}2 \rhk, \quand \uy_b := \ell \lhk \frac {x_a + y}2 \rhk,
$$
\end{enumerate}
where $\ell \subset \Q^2$ is the line of slope $\tfrac {s_\wp} n$ going through the origin. The notation does not show, but the points $\ux$, $\uy$ and the permutation $w_0$ depend on $\wp$. The symbol $w_{0}$ is a permutation in the group $\iS_{\min(x_a, y)}$ and is determined by Definition~\ref{wnuldef}. The symbol $\eps$ in Equation~\eqref{theorempol} is a sign and is equal to 
\begin{equation}\label{signproduct}
\nu \cdot \sign(w_{0}),
\end{equation}
where the sign $\nu$ is equal to $(-1)^{n-x_a}$ if $x_a \leq n$ and it is equal to $1$ otherwise.

\subsection{Application: A dimension formula}
In our previous article \cite{kret1} we explained that  Formula~\eqref{finalformula} gives a formula for the number of points in $B$ if one takes $f^p = \one_{K^p}$ and $\cL$ equal to the trivial local system. Using this simplified formula we proved in \cite[Prop.~4.2]{kret1} a dimension formula for the basic stratum.  We now extend this result to the Shimura varieties satisfying conditions (1)-(25) from the first subsection, with $p$ completely split in $F^+$. 

Take $f^p = \one_{K^p}$ and $\cL$ in Theorem~\ref{maintheorem} so that the right hand side of Equation~\eqref{finalformula} counts the number of points in $B$ over finite fields. We computed the class of representations at $p$ contributing to this formula. Each representation $\pi_p$ at $p$ contributes with a certain function $P(q^\alpha)$ to the zeta function of $B$. We call the \emph{order} of $\pi_p$ the order of the function $P(q^\alpha)$ (as function in $q^\alpha$). 

\begin{proposition}\label{partialresult}
 The trivial representation $\pi_p = \one$ contributes with the largest order to the right hand side of Equation~\eqref{finalformula}. 
\end{proposition}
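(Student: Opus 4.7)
The plan is to compute the $q^\alpha$-order of $\Pol(\pi_p)$ explicitly for each $B$-type rigid representation $\pi_p$ and verify that the trivial representation uniquely maximizes it.

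First, using the factorization of the compact trace via van Dijk's formula (Lemma~\ref{traceonstandard}) combined with Theorems~\ref{nognietzosimpel} and~\ref{nognietzosimpeldual}, I would express $\Pol(\pi_p)$ as a product over $F^+$-primes $\wp | p$ and indices $a$. Using the identity $C(n_a, s_a) + \sum_a s_a(n_a - s_a)/2 = s_\wp(n - s_\wp)/2$ built into Equation~\eqref{constantterm}, the $q^\alpha$-exponent arising from the prefactors $q^{s_a(n_a - s_a)/2 \alpha}$ and the constant terms $q^{\alpha C(n_a, s_a)}$ collapses, at each place $\wp$, to the quantity $s_\wp(n - s_\wp)/2$, which depends only on the signatures and not on the rigid parameters $(x_a, y)$. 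Consequently, the $q^\alpha$-order of $\Pol(\pi_p)$ equals the fixed quantity $\sum_\wp s_\wp(n - s_\wp)/2$ plus $\sum_\wp \sum_a D(\wp, a)$, where $D(\wp, a)$ denotes the maximum $q^\alpha$-degree of the Dyck polynomial $\Dyck^+(\ux^{w_0}, \uy)$ attached to the Speh factor $\Speh(x_{\wp, a}, y)$ at signature $s_a = s_\wp x_{\wp, a} y / n$.

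Second, I would analyze the individual degrees $D(\wp, a)$. Since the weight of a north-east step starting at horizontal coordinate $a \in \Q$ is $q^{-\alpha \cdot a}$, the maximum $q^\alpha$-degree is attained by the path configuration placing its north-east steps at the most negative admissible positions, subject to the non-crossing Dyck constraint relative to the line $\ell$ of slope $s_\wp / n$. For the trivial representation at $\wp$, the single factor $\Speh(1, n)$ produces one path running from $\ell((1-n)/2)$ to $\ell((n+1)/2)$, so the leftmost horizontal position $(1-n)/2$ is admissible. For any other $B$-type rigid representation, each Speh factor $\Speh(x_a, y)$ yields $\min(x_a, y)$ paths of horizontal length $\max(x_a, y) < n$; by the description of the endpoints $\ux_j, \uy_j$ in Equation~\eqref{theorempol}, the leftmost endpoint of every such path lies strictly to the right of $\ell((1-n)/2)$, so the largest available weight is strictly smaller.

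Third, to conclude I would establish the combinatorial inequality that, at each $\wp$, $\sum_a D(\wp, a)$ is strictly less for any non-trivial rigid $\pi_p$ than for $\pi_p = \one$. A natural approach is by induction on the number $k$ of Speh factors, showing that merging two factors $\Speh(x_a, y), \Speh(x_{a+1}, y)$ into $\Speh(x_a + x_{a+1}, y)$ (when the $B$-type condition is preserved) strictly increases the combined Dyck degree, and analogously that replacing $\Speh(x, y)$ by $\Speh(1, xy)$ strictly increases it. Iterating these moves reduces any rigid $\pi_p$ to $\one$, with each step strictly increasing the $q^\alpha$-order. The main obstacle is the bookkeeping in the combinatorial comparison: one must carefully verify that the new admissible positions for the leftmost north-east steps after a merge are strictly more negative, which requires a local inspection of the Dyck constraint on $\ell$ near each merge point. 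Once this monotonicity is established for every $\wp$, the proposition follows immediately from the product structure of $\Pol(\pi_p)$.
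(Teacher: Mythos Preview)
Your overall factorization strategy (reduce to place-by-place, then factor-by-factor comparison of Dyck polynomial degrees) is reasonable, but the argument has two genuine problems.

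First, you assert a \emph{strict} inequality: ``$\sum_a D(\wp,a)$ is strictly less for any non-trivial rigid $\pi_p$ than for $\pi_p=\one$''. This is false in general; the paper's remark immediately after the proposition explicitly notes that the maximum is non-strict and that several representations can share the top order. So your final step, as stated, cannot go through.

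Second, your inductive ``merging'' scheme is only sketched, and the obvious obstruction is not addressed: merging $\Speh(x_a,y)\times\Speh(x_{a+1},y)$ into $\Speh(x_a+x_{a+1},y)$, or passing from $\Speh(x,y)$ to $\Speh(1,xy)$, need not preserve the $B$-type divisibility conditions on which the non-vanishing (and hence the very definition) of $\Dyck^+(\ux^{w_0},\uy)$ depends. If an intermediate stage is not $B$-type, the associated Dyck polynomial is zero and you cannot compare orders. You also never actually prove the monotonicity of the Dyck degree under a merge; you only say it requires ``local inspection'', which is exactly the heart of the matter.

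The paper avoids both issues by working one level higher in the Tadic expansion. Rather than comparing the final Dyck polynomials $\Dyck^+(\ux^{w_0},\uy)$ of different Speh factors, it fixes a single $\wp$, writes $\pi_\wp$ via the dualized Tadic formula as $\sum_w \sign(w) I_w^\iota$, and shows the universal inequality
\[
\textup{Ord}\bigl(\Tr(\chi_c^{G_\wp} f, I_w^\iota)\bigr)\ \le\ \textup{Ord}\bigl(\Tr(\chi_c^{G_\wp} f, \one_{G_\wp})\bigr)
\]
for \emph{every} $w\in\iS_h$. This is done by associating to each monomial $X$ in $\cS(f)$ two graphs (one cut according to $M_w$, one uncut for the trivial representation), computing the order-difference as an explicit function $C(w)$ of the segment endpoints, maximizing $C(w)$ over $w$ by an elementary rearrangement argument (the maximizer is $w(a)=h+1-a$), and then checking that this maximum equals $n(h-t)\le 0$ when $h\le t$. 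No $B$-type preservation or merging is needed, and the inequality is naturally non-strict. The case $h\ge t$ is handled by the undualized Tadic formula via the Steinberg representation, and the extension to products of Speh factors follows immediately from the product structure of the compact trace.
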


\begin{remark}
In the statement of the proposition, we mean `largest order' in the non-strict sense. In general there are multiple representations contributing to the formula with the same order. 
\end{remark}

The order of the trivial representation is easily computed, it is equal to:  
\begin{equation}\label{dimensionformula}
\sum_{\wp|p} \lhk \sum_{v \in \wp} \frac {s_v(1-s_v)}2 + \sum_{j = 0}^{s_\wp - 1} \lceil j \frac n{s_\wp} \rceil \rhk,  
\end{equation}
(cf. \cite[(4.4)]{kret1}). 

\begin{proof}[Proof of Proposition~\ref{partialresult}]
Let $\pi_p$ be a unitary rigid representation. Pick one $\wp | p$. The component $\pi_{\wp}$ is a rigid representation of $G_\wp := \Gl_n(F_\wp^+)$. 

Assume first that $\pi_\wp$ is a Speh representation. We assume that $h \leq t$, so we will work in the dual setting. Treatment of the non-dual case is essentially the same (see Eq.~\eqref{eqAzA} at the end of this argument below). Let $T_1 = \langle u_1, v_1 \rangle$, $T_2 = \langle u_2, v_2 \rangle$, $\ldots$, $T_h = \langle u_h, v_h \rangle$ be the segments of the Zelevinsky dual $\pi_\wp^\iota$ of $\pi_\wp$. By Tadic's formula the compact trace $\Tr(\chi_c^{G_\wp} f_\wp, \pi_{\wp})$ is an alternating sum of compact traces $\Tr(\chi_c^{G_\wp} f_\wp, I^\iota_w)$ on Zelevinsky duals of certain standard representations $I_w$. The traces $\Tr(\chi_c^{G_\wp} f_\wp, I^\iota_w)$ can be described using graphs as we explained in the first section of this article. The intuition is that the closer the graph is to the line $\ell$, the larger its weight is, and we claim that the largest weight is attained by trivial representation. More precisely, we claim that for all $f \in \cH_0(G_\wp)$ and for all permutations $w \in \iS_h$ we have
\begin{equation}\label{ordineq}
\textup{Ord}(\Tr(\chi_c^{G_\wp} f, I^\iota_w)) \leq \textup{Ord}(\Tr(\chi_c^{G_\wp} f, \one_{G_\wp})),
\end{equation}
where with $\textup{Ord}(h) \in \Q$ of an element $h \in A^+$ we mean the largest element $x \in \Q$ such that $q^x$ occurs as a monomial in the expression of $h$ with non-zero coefficient. By Proposition~\ref{cttrivial} and Dijk's integration formula for compact traces
we have
$$
\Tr(\chi_c^{G_\wp} f, I^\iota_w) = q^{\tfrac {s(n-s)} 2}\cdot \sum_{X, \ \xi_c^{G_\wp} \chi_{M_w}^{G_\wp} X \neq 0} c_X \cdot \cG_X(\uu^w, \uv) \in A^+,
$$
where $X$ ranges over the monomials $X \in A$ of the Satake transform $\cS(f)$ of $f$, $c_X \in \C$ is their coefficient and where we should explain the notation $\cG_X(\uu^w, \uv)$. The symbol $\uu$ denotes the list of points $\uu_a := \ell(u_a) \in \Q^2$ for $a = 1, \ldots h$ and the list of points $\uv$ is defined by $\uv_a := \ell(v_a + 1) \in \Q^2$. The symbol $\cG_X$ is the graph of the monomial $X$ as defined in the first section. Recall however that $\cG_X$ is only well-defined up to the definition of its starting point. The representation $I_w^\iota$ is obtained by induction from a one dimensional representation of a standard Levi subgroup $M_w$ of $G_\wp$. Let $(n_a^w)$ be the corresponding composition of $n$, and let $k_w$ be the length of this composition. We cut the graph $\cG_X$ into $k_w$ pieces, the first piece contains the first $n_1^w$ steps of $\cG_X$, the second piece contains the next block of $n_2^w$ steps of $\cG_X$ and so on. Thus instead of one graph $\cG_X$ we now have $k_w$ graphs, $\cG_{X, 1}^w, \cG_{X, 2}^w, \ldots, \cG_{X, k_w}^w$, all well defined up to their starting points. We let the starting point of the graph $\cG_{X, 1}^w$ be $\uu_1^w$, the starting point of the graph $\cG_{X, 2}^w$ is by definition $\uu_2^w$, and so on. Then $\cG_{X, 1}^w$, $\cG_{X, 2}^w$, \ldots are well defined graphs in $\Q^2$, and due to our definition of starting points, we have
\begin{equation}\label{wweightproduct}
\prod_{a=1}^{k_w} \weight (\cG_{X, a}^w) = \Tr \lhk X, (I_w^\iota)_{N_0}(\delta_{P_0}^{-1/2}) \rhk \in A^+.
\end{equation}
The condition $\xi_c^{G_\wp} \chi_{M_w}^{G_\wp} X \neq 0$ on $X$ means precisely that the graphs $\cG_{X,a}^w$ have endpoint equal to $\uv_a$ and that these graphs do not cross, but may touch, the line $\ell$. 

Starting from the monomial $X$ we can also defined a second graph $\cH_X$, such that
$$
\weight(\cH_X) = \Tr \lhk X, \one(\delta_{P_0}^{-1/2}) \rhk \in A^+. 
$$
This graph has starting point $\ux = \ell(\tfrac {1-n}2)$ and end point $\uy = \ell(\tfrac {n-1}2+1)$; the steps of $\cH_X$ are defined by Formula~\eqref{thegraph}. 

We now claim that
\begin{equation}\label{ert123}
\prod_{a=1}^h \weight (\cG_{X, a}^w) \leq \weight(\cH_X) \in A^+,   
\end{equation}
for the obvious meaning of `$\leq$'.

Before we prove the claim, let us first show a simple fact of graphs. Let $\cG$ be any graph in $\Q^2$. Then we have, for any point $(a, b) \in \Q^2$ that
\begin{equation}\label{graph_shift} 
{\textup{Ord}}(\cG + (x, y)) = \textup{Ord}(\cG) - x \cdot \textup{Height}(\cG),
\end{equation} 
where the height of $\cG$, $\textup{Height}(\cG)$, is the vertical distance between the initial point of $\cG$ and its end point. This formula is easily seen to be true: The order $\textup{Ord}(\cG)$ is equal to the sum of $-a\cdot e$ over all diagonal steps $(a, b) \to (a+1, b+ e)$ occurring in the graph $\cG$. Adding the point $(x, y)$ to $\cG$ amounts to changing $-a\cdot e$ to $-(a + x)e$ in the definition of the order of $\cG$. Thus the order of $\cG$ is shifted by the sum, over all diagonal steps $(a, b) \to (a+1, b+e)$, of the value $-xe$. This gives the formula in Equation~\eqref{graph_shift}. 

We now return to the graphs $\cG_X$ and $\cH_X$ introduced earlier. We cut $\cH_X$ into $h$ consecutive graphs. The first graph $\cH_{X,1}$ consists of the first $n_1^w$ steps of $\cH_X$, the second graph $\cH_{X, 2}$ consists of the second block of $n_2^w$ steps of $\cH_X$, and so on. The graphs $\cG_{X, a}$ have the same shape as the graphs $\cH_{X, a}$, but they are shifted (the graphs are constructed starting from the same monomial $X$). Therefore we have the relations:
\begin{equation}\label{shiftgraphsA}
(\forall a):\quad \cH_{X, a} = \cG_{X, a} - \ell(u_{w(a)}) + \ell(\tfrac {1-n}2 + n_1^w + \ldots n_{a-1}^w),
\end{equation}
(we subtract the initial point of $\cG_{X, a}$, and then add the initial point of $\cH_{X, a}$); in the above formula we have the convention that
$$
n_1^w + n_2^w + \ldots + n_{a-1}^w = 0, 
$$ 
in case $a = 1$. Note also that 
$$
\textup{Ord}(\cH_X) = \sum_{a=1}^h \textup{Ord} (\cH_{X, a}), 
$$
and similarly for $\cG_X$. By Equations~\eqref{graph_shift} and~\eqref{shiftgraphsA} we have 
$$
\textup{Ord}(\cH_{X, a}) = \textup{Ord}(\cG_{X, a}) - u_{w(a)} \cdot s_a^w + (\tfrac {1-n} 2 + n_1^w + \ldots + n_{a-1}^w) \cdot s_a^w,
$$
where $s_a^w := n_a^w \cdot \tfrac sn = {\textup{Height}}(\cG_{X, a}) = {\textup{Height}}(\cH_{X, a})$. Thus we have to compute the following expression  
\begin{equation}\label{Z_BB} 
C(w) = \frac sn \sum_{a=1}^h \lhk \frac {1-n}2 + n_1^w + n_2^w + \ldots + n_{a-1}^w - u_{w(a)} \rhk n_a^w. 
\end{equation}
To show that Equation~\eqref{ert123} is true, we show that $C(w) \leq 0$ for all permutations $w$. 

To prove that $C(w) \leq 0$, we may ignore the factor $\tfrac sn$ in the above expression. We prove in two steps that $C(w) \leq 0$ for all $w$. We first determine the permutation $w$ such that the value $C(w)$ is maximal (Step 1). Then we compute for this particular permutation the value $C(w)$, and observe that it is non-positive (Step 2). 

We begin with Step 1. We want to determine $w$ such that $C(w)$ is maximal. Let us first simplify the expression somewhat. The expression $C(w)$ is maximal for $w$ if and only if
\begin{equation}\label{Z_AAC}
\sum_{a=1}^h \lhk n_1^w + n_2^w + \ldots + n_{a-1}^w - u_{w(a)} \rhk n_a^w,
\end{equation}
is maximal. To derive~\eqref{Z_AAC} we used\footnote{See Equation~\eqref{urts}, but note that by duality the roles of $h$ and $t$ are switched.}, that the sum $\sum_{a=1}^t \tfrac {n-1} 2 n_a^w$ equals $n\tfrac {1-n}2$ and therefore this sum does not depend on $w$. (Similar arguments will appear also below.) We have 
\begin{equation}\label{Z_AAA}
n_a^w = \lhk \frac {t+h} 2 - a\rhk - \lhk \frac {h - t} 2 - (w(a) - 1)\rhk + 1 = t - a + w(a) . 
\end{equation}
and
\begin{equation}\label{Z_AAA2}
u_{w(a)} = \frac {t - h} 2 - (w(a) - 1).
\end{equation}
We plug Equations~\eqref{Z_AAA} and~\eqref{Z_AAA2} into Equation~\eqref{Z_AAC} to get
$$
\sum_{a = 1}^h\lhk (t - 1 + w(1)) + \ldots + (t - (a-1) + w(a-1)) 
 - \frac {t-h}2 + (w(a) - 1)  \rhk n_a^w 
$$
As before, this expression is maximal for $w$, if and only if the expression
\begin{equation}\label{Z_AAAA}
\sum_{a=1}^h \lhk w(1) - 1 + w(2) - 2 + \ldots + w(a-1) - (a-1) + w(a)\rhk (t - a + w(a)) 
\end{equation} 
is maximal. Equation~\eqref{Z_AAAA} is maximal for $w$ if and only if the expression
\begin{equation}\label{Z_B}
\sum_{a=1}^h \lhk w(1) - 1 + w(2) - 2 + \ldots + w(a-1) - (a-1) + w(a)\rhk (w(a)  - a) 
\end{equation}  
is maximal. 
We may rewrite~\eqref{Z_B} to 
\begin{align}\label{Z_C}
 \sum_{a=1}^h (w(1) + w(2) + \ldots + w(a))a -\sum_{a=1}^h& (1 + 2 + \ldots + (a-1) ) w(a) 
\end{align}
We rearrange the first sum as follows. Count for each index $a$ the coefficient of $w(a)$ to  get
$$
\sum_{a=1}^h (w(1) + w(2) + \ldots + w(a))a = \sum_{a=1}^h \rho(h+1 - a)  w(a), 
$$
where 
$$
\rho(a) := 1 + 2 + 3 + \ldots + a = \tfrac 12 a (a+1). 
$$ 
Thus~\eqref{Z_C} equals
$$
\sum_{a=1}^h (\rho(t+1-a) - \rho(a-1)) w(a)
$$
The function $\nu(a)$ defined by 
$ \nu(a) = \rho(h+1-a) - \rho(a-1)$,
is strictly decreasing in $a$ because
$$
\nu(a+1) - \nu(a) = -(h+1). 
$$ 
We are looking for $w$ such that
$$
\sum_{a=1}^h \nu(a) \cdot w(a)
$$
is maximal, with $\nu(a)$ a strictly decreasing function for $a \in \{1, 2, \ldots, h\}$. This maximum is attained by the permutation $w$ defined by $a \mapsto h+1 - a$. This completes Step 1.  

We now do Step 2. Thus we have $w(a) = h+1 - a$ for all indices $a \in \{1, 2, \ldots, h\}$. We compute the sum
\begin{equation}\label{Z_AA}
C(w) = \sum_{a=1}^h \lhk \frac {1-n}2 + n_1^w + n_2^w + \ldots + n_{a-1}^w - u_{w(a)} \rhk n_a^w. 
\end{equation}
 We have 
\begin{align}
n_a^w &= y_a - u_{w(a)} + 1 = \lhk \frac {h + t} 2 - a \rhk - \lhk \frac {h - t}2 - ( w(a) - 1) \rhk + 1\cr 
&= t - a + w(a) = t - a + (h+1) - a = t + h+1 - 2a, 
\end{align}
and we have
\begin{equation}
u_{w(a)} = \frac {t - h} 2 - (w(a) - 1) = \frac {t-h}2 - (h - a). 
\end{equation}

Note also that,
$$
n = \sum_{a=1}^h n_a^w = \sum_{a=1}^h t + h+1 - 2a. 
$$
(cf. Equation~\eqref{urts}). Thus, Equation~\eqref{Z_AA} becomes
\begin{align*} 
\sum_{a=1}^h \lhk \frac {1-n}2 + \lhk \sum_{b = 1}^{a-1} t + h + 1 - 2b \rhk - \lhk \frac {t-h} 2 - (h-a) \rhk \rhk \cdot \lhk t + h + 1 - 2a \rhk 
\end{align*}
An easy (but somewhat lengthy) computation shows that this last formula simplifies to $n(h-t)$. By assumption we have $h \leq t$. We conclude that the value in Equation~\eqref{Z_AA} is non-positive, which is what we wanted to show. We have now established the claim in Equation~\eqref{ert123}. 

With the same proof, but using the non-dual instead, one can show that 
\begin{equation}\label{eqAzA}
\textup{Ord}(\Tr(\chi_c^{G_\wp} f, I_w)) \leq \textup{Ord}(\Tr(\chi_c^{G_\wp} f, \St_{G_\wp})),
\end{equation} 
is true for all representation $I_w$ occurring in Tadic's formula for Speh representations $\pi$ with $h \geq t$. Because
$$
\textup{Ord}(\Tr(\chi_c^{G_\wp} f, \St_{G_\wp})) \leq \textup{Ord}(\Tr(\chi_c^{G_\wp} f, \one_{G_\wp})),
$$
the inequality of Equation~\eqref{ordineq} is true for all Speh representations. We leave it to the reader to deduce that Equation~\eqref{ordineq} also holds for products of Speh representations, and also for the rigid representations of $G_\wp$ (with the characters $\eps_a$ trivial). 

We return to the group $G(\qp)$ and the full representation $\pi_p$. 
The compact trace $\Tr(\chi_c^{G(\qp)} f_\alpha, \pi_p)$ is the product of the traces on the components, 
$$
\Tr(\chi_c^{G(\qp)} f_\alpha, \pi_p) = \Tr_{\qp^\times}(f_s, \pi_s) \cdot \prod_{\wp | p} \Tr(\chi_c^{G_\wp} f_\wp, \pi_\wp), 
$$
(the first term in the product is the contribution of the factor of similitudes). We proved that all the terms of this product are bounded by the trace on the trivial representation. This is then also true for the entire product.
\end{proof}

We now deduce a formula for the dimension.

\begin{theorem}
The dimension of the basic stratum $B$ is equal to 
$$
\sum_{\wp|p} \lhk \sum_{v \in \wp} \frac {s_v(1-s_v)}2 + \sum_{j = 0}^{s_\wp - 1} \lceil j \frac n{s_\wp} \rceil \rhk.
$$
\end{theorem}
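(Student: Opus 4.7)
The plan is to specialize Theorem~\ref{maintheorem} to the trivial local system $\cL = \lql$ and to the test function $f^p = \one_{K^p}$ at the finite places away from $p$, so that the left-hand side of Equation~\eqref{finalformula} becomes the counting function $\#B(\fqa)$ up to a constant depending on $K^p$ and the Euler-Poincar\'e normalisation at infinity, but not on $\alpha$. The strategy then parallels the proof of Proposition~4.2 of \cite{kret1}, with the essential novelty that the polynomial description of the local factors $\Pol(\pi)$ provided by Theorems~\ref{nognietzosimpel} and~\ref{nognietzosimpeldual} is now available without any hypothesis on the Newton polygon.

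On the geometric side, $B$ is smooth over $\fq$, and the Deligne-Weil bounds imply that $\alpha \mapsto \#B(\fqa)$ has leading term $c \cdot q^{\alpha \dim B}$ with $c \geq 1$ an integer counting the top-dimensional geometrically irreducible $\fq$-components of $B$. On the automorphic side, Proposition~\ref{partialresult} shows that among the rigid $\pi$ appearing in the right-hand side of Equation~\eqref{finalformula}, those with $\pi_p = \one$ contribute with the maximal order in $q^\alpha$, and this maximal order is precisely the number $N$ appearing in Equation~\eqref{dimensionformula}. Matching the two leading orders yields $\dim B = N$, which is the statement of the theorem.

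The main obstacle is the non-vanishing of the top-order coefficient on the automorphic side, so that the matching of orders is a genuine equality and not merely the inequality $\dim B \leq N$. I would handle this exactly as in \cite{kret1}: identify a single automorphic representation $\pi$ with $\pi_p = \one$ whose Euler-Poincar\'e contribution at infinity $\Tr(f_\infty, \pi_\infty)$ is of a definite sign (for instance the trivial representation of $G$, or the representation produced by the Matsushima-type argument used in the case already settled in \cite{kret1}), and verify that no cancellation among the finitely many remaining $\pi$ with $\pi_p = \one$ can kill the leading coefficient. Alternatively, the inequality $\dim B \geq N$ is already known from the explicit $N$-dimensional subvariety of $B$ constructed in \cite{kret1}, so one only needs the upper bound $\dim B \leq N$, which follows directly from Proposition~\ref{partialresult} and the Weil bounds with no non-vanishing input required. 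This second route is the cleaner one, and it is the one I would follow in writing up the argument.
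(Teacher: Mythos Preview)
Your upper bound $\dim B \leq N$ is fine and matches the paper: specialise to $f^p = \one_{K^p}$ and trivial $\cL$, use that the left-hand side of Equation~\eqref{finalformula} is then the point count $\#B(\fqa)$ (Lefschetz), and bound its growth by Proposition~\ref{partialresult}. (You do not need $B$ smooth for this, and in fact the basic stratum is generally not smooth; the Lang--Weil estimate suffices.)

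The gap is in your lower bound. Your preferred ``second route'' appeals to an explicit $N$-dimensional subvariety of $B$ constructed in \cite{kret1}; no such construction is there. The dimension result \cite[Prop.~4.2]{kret1} was obtained by the same point-counting method, under the restrictive Newton-polygon hypothesis, not by exhibiting a subvariety. Your ``first route'' is also insufficient as stated: with $f^p = \one_{K^p}$ there may be several automorphic $\pi$ with $\pi_p$ a unitary twist of $\one_{G(\qp)}$, and you give no mechanism to prevent their top-order contributions from cancelling.

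The paper handles this by a genuinely different device. It chooses an auxiliary split prime $p_1 \neq p, \ell$, keeps $f^{p p_1} = \one_{K^{pp_1}}$, but takes $f_{p_1}$ spherical so that $\Tr(f_{p_1}, \pi_{p_1}) = 1$ if $\pi_{p_1} \cong \one$ and $0$ otherwise, for the finitely many $\pi_{p_1}$ that occur. Strong approximation then forces any contributing $\pi$ to be one-dimensional, so $\pi_p$ is a finite-order unramified twist of $\one$. Restricting to $\alpha$ in a suitable congruence class makes all these twists contribute identically, and the right-hand side collapses to a nonzero constant times $\Tr(\chi_c^{G(\qp)} f_\alpha, \one)$. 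This forces a Frobenius eigenvalue of order $N$ in the cohomology, hence $\dim B \geq N$. You should replace your lower-bound argument by this auxiliary-prime trick.
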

\begin{proof}
Apply Proposition~\ref{partialresult} and Theorem~\ref{maintheorem} to find 
$$
\dim(B) \leq \sum_{\wp|p} \lhk \sum_{v \in \wp} \frac {s_v(1-s_v)}2 + \sum_{j = 0}^{s_\wp - 1} \lceil j \frac n{s_\wp} \rceil \rhk.
$$
We now prove the opposite inequality. We return to the final formula we found in Theorem~\ref{maintheorem}:
\begin{equation}\label{finalformulaB}
\sum_{i=0}^\infty (-1)^i \Tr(f^{\infty p} \times \Phi^\alpha_\p, \uH_{\et}^i(B_{\lfq}, \iota^*\cL)) = \sum_{\bo{\ \pi \subset \cA(G)} {\pi_p \textup{ rigid}}} 
\Tr(\chi_c^G f_\alpha, \pi_p) \cdot \Tr(f^p, \pi^p).
\end{equation}
We take in this formula $f^p$ and $\cL$ of the following form. Let $p_1$ be a prime number with
\begin{itemize}
\item $p_1$ is different from $\ell, p$; 
\item the group $G$ splits over $\Q_{p_1}$; 
\item the group $K$ splits into a product $K_{p_1} K^{p_1}$ of a hyperspecial group at $p_1$ and a compact open subgroup $K^{p_1} \subset G(\Af^{p_1})$ outside $p_1$.
\end{itemize} 
We take 
\begin{itemize}
\item $f^{p p_1} = \one_{K^{p p_1}}$;
\item $f_{p_1}$ is an arbitrary $K_{p_1}$-spherical function;
\item $\cL = \lql$ (the trivial local system).
\end{itemize}
There exist only a finite number of representations $\pi_{p_1}$ contributing to Equation~\eqref{finalformulaB}, and one of these representations is the trivial representation. Thus we may find a spherical Hecke operator $f_{p_1} \in \cH(G(\Q_{p_1}))$ such that
$$
\Tr(f_{p_1}, \pi_{p_1}) = \begin{cases}
1 & \pi_{p_1} \cong \one_{G(\Q_{p_1})} \cr
0 & \textup{otherwise,}
\end{cases}
$$ 
for all representations $\pi_{p_1}$ occurring in Equation~\eqref{finalformulaB}. We consider the Hecke operator $f^p := \one_{K^{p p_1}} \otimes f_{p_1}$ in Equation~\eqref{finalformulaB}. By construction, any automorphic representation $\pi \subset \cA(G)$ contributing to Equation~\eqref{finalformulaB} has $\pi_{p_1} \cong \one_{G(\Q_{p_1})}$. By a strong approximation argument, the representation $\pi$ is one dimensional\footnote{See for example Lemma 3.6 in our article~\cite{kret1}, although this result is of course well known.}, and in particular Abelian.  Consequently, at the prime $p \neq p_1$,  the representation $\pi_p$ is a twist of $\one_{G(\qp)}$ by an unramified character $\chi_p$. Because the representation $\xi$ at infinity is trivial, the character $\chi_p$ is of finite order. Therefore there exists an integer $r>0$ such that, whenever $r$ divides $\alpha$, we have 
$$
\Tr(\chi_c^{G(\qp)} f_\alpha, \pi_p) = \Tr(\chi_c^{G(\qp)} f_\alpha, \one), 
$$ 
for all representations $\pi_p$ contributing to Equation~\eqref{finalformulaB}. From now on we consider only $\alpha$ such that $r|\alpha$. The right hand side of Equation~\eqref{finalformulaB} simplifies to 
$$
C \cdot \Tr(\chi_c^{G(\qp)} f_\alpha, \one), 
$$ 
where $C$ is some non-zero constant. Thus for our choice of $f^{\infty p}$ the trace 
\begin{equation}\label{growswithorder}
\sum_{i=0}^\infty (-1)^i \Tr(f^{\infty p} \times \Phi^\alpha_\p, \uH_{\et}^i(B_{\lfq}, \lql)) 
\end{equation}
grows with the order of the trivial representation. View $\sum_{i=0}^\infty (-1)^i \uH_{\et}^i(B_{\lfq}, \lql)$ as a virtual representation of the group  $(\Phi_{\p}^{r})^{\Z}$, and write it as a linear combination of the characters of this group. The character of highest order occurring in this expression determines the dimension of the variety $B$. By the conclusion in Equation~\eqref{growswithorder} there occurs a character whose order is at least  $\textup{Ord}(\Tr(\chi_c^{G(\qp)} f_\alpha, \one))$. This means that
$$
\dim(B) \geq \sum_{\wp|p} \lhk \sum_{v \in \wp} \frac {s_v(1-s_v)}2 + \sum_{j = 0}^{s_\wp - 1} \lceil j \frac n{s_\wp} \rceil \rhk.
$$
This completes the proof of the Theorem. 
\end{proof}

\begin{remark}
The above formula confirms the conjecture for the dimension of the basic stratum specialized to the cases we consider. See for example~\cite{MR2252119}.
\end{remark}

\subsection{Application: Vanishing of the cohomology}

In our previous article~\cite{kret1} we assumed that the signatures $s_\wp$ are coprime to the number $n$.  Under these conditions the cohomology of the basic stratum is very simple: Locally at the prime $p$, only the trivial representation and (essentially) the Steinberg representation contribute to Expression~\eqref{finalformula}. In fact this is true in a larger class of cases:

\begin{corollary}
Assume there is one $F^+$-place $\wp$ above $p$ such that $s_\wp$ is coprime to $n$. Then only the Steinberg representation and the trivial representation contribute to the formula in Equation~\eqref{finalformula}.
\end{corollary}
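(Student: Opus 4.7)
My plan is to specialise, at the place $\wp$ with $\gcd(n, s_\wp) = 1$, the rigid-representation analysis developed in the paragraphs immediately preceding this corollary. Let $\pi \subset \cA(G)$ be an automorphic representation contributing non-trivially to the right hand side of Equation~\eqref{finalformula}, and write
$$
\pi_\wp \;=\; \Ind_{P(\wp)}^{\Gl_n(F_\wp^+)} \bigotimes_{a=1}^{k_\wp} \Speh(x_{\wp,a}, y)(\eps_{\wp,a}),
$$
where $y\mid n$ is the \emph{global} divisor attached to $\pi_p$ (independent of $\wp$) and $(x_{\wp, a})$ is a composition of $n/y$.

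I would first squeeze $k_\wp$ down to $1$. The constant-term formula~\eqref{constantterm} shows that the compact trace at $\wp$ vanishes unless each shifted degree $s_a := s_\wp \cdot x_{\wp, a} y / n$ is an integer. Under $\gcd(n, s_\wp) = 1$ this amounts to $n \mid x_{\wp, a} y$ for every index $a$, and together with $x_{\wp, a} y > 0$ and $\sum_{a} x_{\wp, a} y = n$, only the possibility $k_\wp = 1$ with $x_{\wp, 1} y = n$ survives.

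I would then apply Theorem~\ref{nognietzosimpel} (or, equivalently, its dual Theorem~\ref{nognietzosimpeldual}) to the unique remaining Speh factor $\Speh(x_{\wp, 1}, y)$: the compact trace against $f_{n\alpha s_\wp}$ is non-zero precisely when $m = n/\gcd(n, s_\wp) = n$ divides $x_{\wp, 1}$ or $y$. Since $x_{\wp, 1}, y \geq 1$ and $x_{\wp, 1}\cdot y = n$, the only admissible pairs are $(x_{\wp, 1}, y) = (n, 1)$, so that $\pi_\wp$ is an unramified twist of $\Speh(n, 1) = \St_{\Gl_n(F_\wp^+)}$, or $(x_{\wp, 1}, y) = (1, n)$, so that $\pi_\wp$ is an unramified twist of $\Speh(1, n) = \one_{\Gl_n(F_\wp^+)}$.

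To conclude, I would use the fact that $y$ is global: the value $y = 1$ propagates the Steinberg case to every $F^+$-place above $p$, and $y = n$ propagates the trivial case. The small care point is that both conditions are needed at $\wp$: integrality of the $s_a$ alone collapses $k_\wp$ to $1$ but not the value of $(h, t)$, while the divisibility criterion of Theorems~\ref{nognietzosimpel}--\ref{nognietzosimpeldual} then pins the pair $(x_{\wp, 1}, y)$ to the two advertised options. I do not foresee any genuine obstacle; the corollary is essentially a short divisibility exercise layered on top of the main theorems already proved.
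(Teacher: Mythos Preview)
Your argument is correct and matches the paper's approach exactly—indeed, the paper's own proof is the single sentence ``This follows directly from the definition of rigid representation of the group $G(\qp)$,'' and what you have written is precisely the unpacking of that sentence via the global parameter $y$ and the divisibility criterion of Theorems~\ref{nognietzosimpel}--\ref{nognietzosimpeldual}.

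One small point worth flagging in your final paragraph: the claim that ``$y=1$ propagates the Steinberg case to every $F^+$-place above $p$'' is a little loose. When $y=1$, the rigidity condition at another place $\wp'$ only says $\pi_{\wp'}\cong\prod_a \St_{G_{x_{\wp',a}}}(\eps_{\wp',a})$ for some composition $(x_{\wp',a})$ of $n$; the $B$-type constraints at $\wp'$ (governed by $\gcd(n,s_{\wp'})$, which need not equal~$1$) do not by themselves force $k_{\wp'}=1$. By contrast, $y=n$ genuinely forces $k_{\wp'}=1$ and $\pi_{\wp'}=\one$ everywhere, since then $(x_{\wp',a})$ is a composition of $n/n=1$. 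So the ``trivial'' half propagates literally, while the ``Steinberg'' half propagates only in the sense that every local factor is a product of essentially square integrable representations. This is not a defect of your reasoning so much as an imprecision already present in the corollary's wording (and hence also in the paper's one-line proof); the essential content—namely $y\in\{1,n\}$—is exactly what you have established.
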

\begin{proof}
This follows directly form the definition of rigid representation of the group $G(\qp)$. 
\end{proof}

\begin{remark}
In the article \cite{kret1} we assumed that, for all $\wp$, the number $s_\wp$ is coprime to $n$ or $s_\wp$ is equal to $0$ or $n$. Only under this larger assumption the compact trace on the Steinberg representation coincides with the compact trace on the trivial representation (up to sign), just as in \cite{kret1}. In the above Corollary this need not be the case.
\end{remark}

\subsection{Application: Euler-Poincar\'e characteristics} Finally we have a remark on the Euler-Poincar\'e characteristic of the variety $B$. The evaluation at $q = 1$ of our formula gives the expression of the Euler-Poincar\'e characteristic. Thus to compute the Euler-Poincar\'e characteristic we get the combinatorial problem to compute, apart from dimensions of spaces of automorphic forms, the \emph{number} of non-intersecting Dyck paths. This problem has been considered in an equivalent forms in the literature; a good starting point are the books of Stanley~\cite{MR2868112} and the references therein. 

\section{Examples}

We end this article with some examples. Let us first explain why we need the condition that $p$ splits completely in the center of $D$. 

\subsection{Products of simple Kottwitz functions} To study the reduction modulo $p$ of unitary Shimura varieties, the simple Kottwitz functions $f_{n\alpha s}$ as we defined them in Equation~\eqref{kottwitzfunction} are \emph{not} enough. These functions count only points of unitary Shimura varieties if the group $G$ of the Shimura datum is of the following kind. Consider a unitary Shimura variety associated to a division algebra $D$ as in the previous section. Let $U$ in $G$ be the subgroup of elements whose factor of similitudes is equal to one. Then $U$ is a unitary group and $U(\R)$ is isomorphic to a product of standard unitary groups $U(p_\tau, q_\tau)$ with $\tau$ ranging over the infinite places of the maximal totally real subfield $F^+$ of the center $F$ of $D$. The function $f_{n\alpha s}$ counts points on the reduction of $\Sh_K$ modulo $p$ if we have $p_\tau = 0$ or $q_\tau = 0$ for all $F^+$-places $\tau$, but with one $F^+$-place excluded. For the excluded $F^+$-place $\tau_0$ we must have $p_{\tau_0} = s$ or $p_{\tau_0} = n-s$. For unitary Shimura varieties with several non-zero signatures at infinity, one will need to consider products of the functions $f_{n\alpha s}$ for several different values of $s$. 

\begin{remark}
Compact traces do \emph{not} commute with products of Hecke operators. 
\end{remark}

\begin{example} Let us assume that there are two infinite $F^+$-places $\tau_0, \tau_1$ with $p_{\tau_0} = p_{\tau_1} = 1$ and that $p_\tau = 0$ for all other $\tau$. Choose embeddings $\C \supset \li \Q \subset \lqp$, so the group $\Gal(\lqp/\qp)$ acts on the set of infinite places of $F^+$. Assume the places $\tau_0$ and $\tau_1$ lie in the same $\Gal(\lqp/\qp)$-orbit and assume $\alpha$ is sufficiently divisible such that the $E_{\p,\alpha}$-algebra $F^+ \otimes E_{\p,\alpha}$ is split. Then the function counting points in the set $\# \Sh_K(\fqa)$ is (essentially) the convolution product $f = f_{n\alpha 1} * f_{n\alpha 1} \in \cH_0(\Gl_n(F))$, where $F$ is some finite extension of $\Qp$. An easy computation shows that $f = 2q^\alpha f_{n\alpha 2} + f_{n(2\alpha)1}$, and therefore $\Tr(\chi_c^G f, \one_G) = 2(q + q^1 + \ldots + q^{\alpha \lfloor \frac n2 \rfloor}) + 1$ (use the Example in \cite[p.~14]{kret1}). Consequently the number of points in the basic stratum over the field $\F_{q^\alpha}$ is the product of the above polynomial times a cohomological expression depending only on the class of the degree $\alpha$ in the group $\Z/h\Z$, where $h$ is related to the class number of the cocenter of $G$ \cite[Cor.~4.1]{kret1}. In particular the variety is of dimension $\lfloor \frac n2 \rfloor$ in this case. If we assume instead that $\tau_0$ and $\tau_1$ lied in a \emph{different} $\Gal(\lqp/\qp)$-orbit, then the basic stratum of $\Sh_K$ is a \emph{finite} variety. Whether or not $\tau_0$ and $\tau_1$ lie in the same $\Gal(\lqp/\qp)$-orbit is a condition on how the prime $p$ decomposes as a product of prime ideals in the ring of integers $\cO_{F^+}$ of $F^+$. Thus, roughly speaking\footnote{To obtain the precise statement use the discussion at Equation (3.1) of [\textit{loc. cit}].}, the form of the function $\alpha \mapsto \Tr (\chi_c^{G(\qp)} f, \one_{G(\qp)})$ depends only on two pieces of information: (1) The signatures of the unitary group at infinity, and (2) how the prime $p$ decomposes in $F^+$. 
\end{example}

\subsection{Two different prime factors} Assume $F^+$ is of degree $2$ over $\Q$ and $n$ is a product of two primes $x,y$ with $x < y$. Let $U \subset G$ be the subgroup of elements whose factor of similitudes is trivial. We assume $U(\R)$ is isomorphic to $U(x, n-x)(\R) \times U(y, n-y)(\R)$. The reflex field $E$ of the Shimura datum coincides with the field $F$. 

There are two cases to consider, either the prime $p$ where we reduce $\Sh_K$ splits in $F^+$ or $p$ is inert (but unramified). Assume that $p$ splits, then $G(\qp) = \Qp^\times \times \Gl_n(\Qp) \times \Gl_n(\Qp)$. Recall that we picked an embedding $\nu_p \colon \li \Q \to \lqp$. Therefore the factors of the product $\Gl_n(\qp) \times \Gl_n(\qp)$ are ordered: the embbeding $\nu_p$ identifies the two $F^+$-places $\tau_1, \tau_2$ at infinity with the two $F^+$-places $\wp_1, \wp_2$ above $p$. Via the isomorphism $U(\R) \cong U(x, n-x)(\R) \times U(y, n-y)(\R)$ we associate to $\tau_1, \tau_2$ (and thus to $\wp_1, \wp_2$) a signature equal to $x$ or $y$. Assume that $\wp_1$ (and $\tau_1$) correspond to $x$ and $\wp_2$ (and $\tau_2$) correspond to $y$. Similarly, the first factor of the group $\Gl_n(\qp) \times \Gl_n(\qp)$ corresponds to $\wp_1$ and the second factor corresponds to $\wp_2$.

The $B$-type representations of $G(\qp)$ are the representations contributing to the cohomology of the basic stratum. Ignoring the factor of similitudes, the $B$-type representations of $G(\qp)/\Qp^\times = \Gl_n(\Qp) \times \Gl_n(\qp)$ are:
\begin{equation}\label{repA}
\speh(x, y)(\eps) \otimes \prod_{a=1}^k \Speh(x_a, y)(\eps_a),
\end{equation}
\begin{equation}\label{repB}
\prod_{a=1}^k \Speh(y_a, x)(\eps_a) \otimes \Speh(x, y)(\eps),
\end{equation}
\begin{equation}\label{repC}
\St_G(\eps) \otimes \St_G(\eps'),
\end{equation}
\begin{equation}\label{repD}
\one_G(\eps) \otimes \one_G(\eps'),
\end{equation}
where, in these equations the number $k$ can, a priori, be any positive number. In Equation~\eqref{repA}, the symbol $(x_a)$ ranges over the compositions of the prime $x$ and in Equation~\eqref{repB}, the symbol $(y_a)$ ranges over the compositions of the prime $y$. The symbols $\eps, \eps', \eps_a$ denote arbitrary, unrelated, unramified unitary characters. 

\subsection{Some explicit polynomials} We specialize our first example further, and assume that $x = 2$ and $y = 3$, so $U(\R) \cong U(2, 4)(\R) \times U(3, 3)(\R)$. We write down the polynomials $\Tr(\chi_c^G f_{\alpha}, \pi_p) \in A^+$ for the representations that occur. The unramified characters $\eps, \eps_a, \eps'$ and the factor of similitudes have no influence on the form of the polynomials, so we leave them out. 

The computation of the compact traces on the representations $\pi_p = \Speh(3, 2)$ and $\pi_p = \Speh(2, 3)$ is done in the Figures~4 and~5. Recall that the computuation on $\speh(3, 2)$ is done via the segments of its dual representation $\Speh(2, 3)$. The Zelevinsky segments of the representation $\Speh(2, 3)$ are $\lbr -\tfrac 12, \tfrac 12, \tfrac 32 \rbr$ and $\lbr -\tfrac 32, -\tfrac 12, \tfrac 12\rbr$. To compute the compact traces we consider the line $\ell$ in $\Q^2$ of slope $\tfrac sn$ and consider the weights of non-crossing lattice paths. In our case there are two possible slopes, slope $\tfrac 12$ and slope $\tfrac 13$; these yield several different polynomials. 
 
\begin{figure}[h]
\begin{center}
\medskip
\input{speh23.pstricks}
\end{center}
\begin{caption}{The compact trace on the representation $\Speh(3, 2)$ with respect to the function $f_{6 \alpha 3}$. We have $\tfrac sn = \tfrac 12$, and $\ux_1 = \ell(-\tfrac 12)$, $\ux_2 = \ell(-\tfrac 32)$, $\uy_1 = \ell(\tfrac 32 + 1)$ and $\uy_2 = \ell(\tfrac 12 + 1)$. The permutation $w_0$ is equal to $(12)$. We see that there are two Dyck $2$-paths going from the points $\ux^{w_0}$ to the points $\uy$, and one of those paths is non-strict because it touches the line $\ell$. Therefore $\Dyck_\strict^+(\ux^{w_0}, \uy) = q^{-1/2\alpha - 1/2\alpha - 3/2 \alpha} = q^{-5/2\alpha }$ and $\Dyck^+(\ux^{w_0}, \uy) = q^{-5/2\alpha} + q^{-3/2\alpha}$. We conclude: $\Tr(\chi_c^{G(\qp)} f_{6 \alpha 3}, \Speh(3, 2)) = (-1)^{n-t} \sign(w_0) q^{\tfrac {s(n-s)}2 \alpha} q^{-5/2 \alpha} = -q^{2 \alpha}$. }
\end{caption}
\label{figure4}
\end{figure}

\begin{figure}[h]
\begin{center}
\medskip
\input{speh32.pstricks}
\end{center}
\begin{caption}{The compact trace on the representation $\pi_p = \Speh(3, 2)$ with respect to the function $f_{6 \alpha 2}$. We have $\tfrac sn = \tfrac 13$, and $\ux_1 = \ell(-\tfrac 12)$, $\ux_2 = \ell(-\tfrac 32)$, $\uy_1 = \ell(\tfrac 32 + 1)$ and $\uy_2 = \ell(\tfrac 12 + 1)$. The permutation $w_0$ is the trivial permutation. There is one Dyck $2$-path going from the points $\ux^{w_0}$ to the points $\uy$ and this $2$-path is strict. Therefore $\Dyck^+_\strict(\ux^{w_0}, \uy) = \Dyck^+(\ux^{w_0}, \uy) = q^{-1/2 \alpha - 3/2 \alpha} = q^{-\alpha}$.
We conclude: $\Tr(\chi_c^{G(\qp)} f_{6 \alpha 2}, \Speh(3,2)) = (-1)^{n-t} \sign(w_0) q^{\frac {s(n-s)}2} = -q^{3 \alpha}$.
}
\end{caption}
\label{figure5}
\end{figure}

In the illustrations we found that 
\begin{align*}
\Tr(\chi_c^{G(\qp)} f_{6 \alpha 3}, \Speh(3, 2)) & = -q^{-2 \alpha} \cr
\Tr(\chi_c^{G(\qp)} f_{6 \alpha 2}, \Speh(3, 2)) & = -q^{3 \alpha}.
\end{align*}
Using the duality and the computation in the figures, we find that
\begin{align*}
\Tr(\chi_c^{G(\qp)} f_{6 \alpha 3}, \Speh(2, 3)) & = q^{2 \alpha} + q^{3 \alpha} \cr
\Tr(\chi_c^{G(\qp)} f_{6 \alpha 2}, \Speh(2, 3)) & = q^{2\alpha}.  
\end{align*}
By drawing the picture, we see in a similar manner to the illustrations that
\begin{equation}\label{speh22}
\Tr(\chi_c^{G_4} f_{4 \alpha 2}, \Speh(2, 2)) = q^{3\alpha}. 
\end{equation}
and
\begin{align}\label{trivial62}
\Tr(\chi_c^{G_6} f_{6 \alpha 2}, \one_{G_6}) &= 1 + q^{\alpha} + q^{2\alpha} \cr
\Tr(\chi_c^{G_6} f_{6 \alpha 2}, \St_{G_6}) &= -(q^{\alpha} + q^{2 \alpha}) \cr
\Tr(\chi_c^{G_6} f_{6 \alpha 3}, \one_{G_6}) &= 1 + q^\alpha + 2q^{2\alpha} + q^{3\alpha} \cr 
\Tr(\chi_c^{G_6} f_{6 \alpha 3}, \St_{G_6}) &= -(1 + q^\alpha). 
\end{align}

The representations at $p$ occurring in the alternating sum of the cohomology of the basic stratum are (up to twists):
$$
\Speh(2, 3) \otimes \Speh(2, 3), \quad \Speh(2, 3) \otimes (\one_{G_3} \times \one_{G_3});
$$
\begin{align*} 
&\Speh(3, 2) \otimes \Speh(3, 2), \quad (\Speh(2, 2) \times \one_{G_2}) \otimes \Speh(3, 2) \quad (\one_{G_2} \times \Speh(2, 2)) \otimes \Speh(3, 2), \cr
&(\one_{G_2} \times \one_{G_2} \times \one_{G_2}) \otimes \Speh(3, 2); 
\end{align*}
$$
\St_{G_6} \otimes \St_{G_6}; 
$$
$$
\one_{G_6} \otimes \one_{G_6}.
$$

Let us ignore the factor of similitudes of the group $G(\qp)$. On the group $\Gl_2(\qp) \times \Gl_2(\qp)$ the function 
of Kottwitz is equal to $f_{6\alpha 2} \otimes f_{6\alpha 3} \in \cH_0(\Gl_n(\Qp)) \otimes \cH_0(\Gl_n(\qp))$. 
With the formulas we gave above the compact traces on the representations in this list are now all explicit.

\bibliographystyle{plain}
\bibliography{grotebib}

\end{document}